\documentclass[11pt]{article}
\usepackage[letterpaper,margin=1.00in]{geometry}
\usepackage{float}
\usepackage{calc}

\usepackage{amscd}
\usepackage{mathrsfs}
\usepackage[IL2]{fontenc}

\usepackage{comment} % \begin{comment}\end{comment} to comment stuff that may be uncommented later on

\usepackage{color}
\usepackage{amssymb,amsmath,amsthm,amsfonts,latexsym}
\usepackage[utf8x]{inputenc}
\usepackage{bbm} %this is so we can use mathbb with numbers, but using the command bbm

%\usepackage[margin=1.5in]{geometry}
%1in was here but todo did not fit to the page

\usepackage{graphicx}
\usepackage[textsize=tiny]{todonotes}
%[textsize=tiny] normally
%[disable] to disable

%\usepackage[english]{babel}
%\usepackage[latin1]{inputenc}
%\usepackage[T1]{fontenc}

\usepackage{amsmath,amsfonts,amssymb,amsthm} 
\usepackage{tikz}
\usepackage{hyperref}
\usepackage{thmtools}
\usepackage{thm-restate}
%\usepackage[all]{xy}
%%%%

\newtheorem{theorem}{Theorem}[section]

\newtheorem*{claim*}{Claim}
\newtheorem*{theorem*}{Theorem}
\newtheorem*{definition*}{Definition}
\newtheorem*{remark*}{Remark}

\newtheorem{corollary}[theorem]{Corollary}
\newtheorem{lemma}[theorem]{Lemma}
\newtheorem{remark}[theorem]{Remark}
\newtheorem{claim}[theorem]{Claim}

\newtheorem{proposition}[theorem]{Proposition}

\newtheorem{definition}[theorem]{Definition}

\newcommand{\comm}[1]{}
\usepackage[capitalize, nameinlink]{cleveref}
\crefname{theorem}{Theorem}{Theorems}
\crefname{proposition}{Proposition}{Propositions}
\crefname{observation}{Observation}{Observations}
\crefname{lemma}{Lemma}{Lemmas}
\crefname{claim}{Claim}{Claims}
\crefname{problem}{Problem}{Problems}
\crefname{conjecture}{Conjecture}{Conjectures}
\crefname{question}{Question}{Questions}
\crefname{example}{Example}{Examples}
\crefname{fact}{Fact}{Facts}

% for BAIRE = LOCAL O(log n)
\newcommand{\rake}{{\sf Rake}}
\newcommand{\compress}{{\sf Compress}}

\newcommand{\VC}[1]{{V}_{#1}^\mathsf{C}}
\newcommand{\VR}[1]{{V}_{#1}^\mathsf{R}}

\newcommand{\simm}{\overset{\star}{\sim}}

% end

\renewcommand{\deg}{\operatorname{deg}}
\newcommand{\dom}{\operatorname{dom}}

\newcommand{\free}{\operatorname{Free}}

\newcommand{\local}{\mathsf{LOCAL}}

\newcommand{\LOCAL}{\mathsf{LOCAL}}

\newcommand{\id}{\operatorname{id}}	
\newcommand{\Hv}{H_{\operatorname{v}}}
\newcommand{\Hr}{H_{\operatorname{r}}}
\newcommand{\Nv}{N_{\operatorname{v}}}
\newcommand{\Nr}{N_{\operatorname{r}}}
\newcommand{\rng}{ran}

\newcommand{\CONT}{\mathsf{CONTINUOUS}}
\newcommand{\cont}{\mathsf{CONTINUOUS}}

\newcommand{\TOAST}{\mathsf{TOAST}}

\newcommand{\toast}{\TOAST}

\newcommand{\borel}{\mathsf{BOREL}}

\newcommand{\continuous}{\mathsf{CONTINUOUS}}
\newcommand{\baire}{\mathsf{BAIRE}}

\newcommand{\fA}{\mathcal{A}}

\newcommand{\fD}{\mathcal{D}}
\newcommand{\fE}{\mathcal{E}}

\newcommand{\fG}{\mathcal{G}}

\newcommand{\fT}{\mathcal{T}}

\newcommand{\fV}{\mathcal{V}}

\newcommand{\fZ}{\mathcal{Z}}

\newcommand{\ta}{\mathtt{a}}

%IDGRAPHS

\newcommand{\N}{\mathbb{N}}

\newcommand{\dist}{\fT^{\operatorname{dist}}}
\newcommand{\distance}{dist}
\newcommand{\lab}{\fT^{\operatorname{\{0,1\}}}}
\newcommand{\AutR}{\operatorname{Aut}^o(T_\Delta)}
\newcommand{\AutT}{\operatorname{Aut}(T_\Delta)}

\newcommand{\ord}{ord}
\newcommand{\vdeg}{vdeg}
\newcommand{\tzeroone}{\mathcal{T}^{\{0,1\}}_\Delta}
\newcommand{\rn}{\operatorname{rank}}
\newcommand{\mb}{\mathbf}

\def\leukfrac#1/#2{\leavevmode
               \kern.1em
                \raise.9ex\hbox{\the\scriptfont0 ${}_#1$}
                \hskip -1pt\kern-.1em
                /\kern-.15em\lower.10ex\hbox{\the\scriptfont0 ${}_#2$}}

\makeatletter
\def\diam{\mathop{\operator@font diam}\nolimits}
\makeatother

\newcommand{\realized}[5]{$#1 \ \frac{#3}{#4,#5} \ #2$}
\newcommand{\classs}[1]{[#1]}

\newcommand{\sizeofarray}{(2\ell+1) \times (\Delta-2)}
\title{\vspace{-1cm} Deterministic Distributed Algorithms and Measurable Combinatorics on $\Delta$-Regular Forests\footnote{This paper is an extension of some parts of the conference paper ``Local Problems on forests from the Perspectives of Distributed Algorithms, Finitary Factors, and Descriptive Combinatorics (arXiv:2106.02066)'' presented at the 13th Innovations in Theoretical Computer Science Conference (ITCS 2022).}}

\begin{document}
	
	\newcommand*\samethanks[1][\value{footnote}]{\footnotemark[#1]}
	
	\author{
		Sebastian Brandt \\
		\small CISPA Helmholtz Center for Information Security \\
		\small \texttt{brandt@cispa.de}\\
		\and 
		Yi-Jun Chang%\thanks{Supported by  Dr.~Max R\"{o}ssler, by the Walter Haefner Foundation, and by the ETH Z\"{u}rich Foundation.}
		\\
		\small National University of Singapore \\
		\small \texttt{cyijun@nus.edu.sg} \\
		\and 
		Jan Greb\'{i}k%\thanks{Supported by Leverhulme Research Project Grant RPG-2018-424.} 
		\\
		\small University of Leipzig\\
		\small \texttt{grebikj@gmail.com }\\
		\and 
		Christoph Grunau%\thanks{Supported by the European Research Council (ERC) under the European Unions Horizon 2020 research and innovation programme (grant agreement No.~853109).}
		\\
		\small ETH Z\"{u}rich \\
		\small \texttt{cgrunau@inf.ethz.ch}\\
		\and 
		V\'{a}clav Rozho\v{n}%\samethanks
		\\
		\small Charles University \\
		\small \texttt{vaclavrozhon@gmail.com}\\
		\and 
		Zolt\'{a}n Vidny\'{a}nszky%\thanks{Partially supported by the National Research, Development and Innovation Office -- NKFIH, grants no.~113047, no.~129211 and FWF Grant M2779.}
		\\
		\small E\"otv\"os Lor\'and University\\
		\small \texttt{zoltan.vidnyanszky@ttk.elte.hu}\\}

	\date{}
	
	\maketitle 
	\pagenumbering{gobble}

	%\vspace{-1cm}
	\begin{abstract}

        We investigate the connections between the fields of distributed computing and measurable combinatorics by considering complexity classes of locally checkable labeling problems on regular forests.
        We show that the most important deterministic complexity classes from the LOCAL model of distributed computing exactly coincide with well-studied classes in measurable combinatorics.
        Namely, first we show that a locally checkable labeling problem admits a continuous solution if and only if it can be solved by a deterministic local algorithm with complexity $O(\log^* n)$.
        Second, our main result states that, surprisingly, a locally checkable labeling problem admits a Baire measurable solution if and only if it can be solved by a local algorithm with complexity $O(\log n)$. These theorems suggest the existence of deeper connections between the two frameworks.
        Furthermore, the latter result relies on a complete combinatorial characterization of the classes in question, and as a by-product, it shows that membership in these classes is decidable.

	\end{abstract}

	\pagenumbering{arabic}   
	
	\section{Introduction}
	
	This work is part of a research program that investigates the connections between the area of distributed computing and measurable combinatorics. In the past couple of years, it has been revealed that there is a surprising connection between these two fields. In fact, oftentimes the two communities have been working on the same type of questions and have been using the same type of methods, independently of each other. The first connections of this sort have been formalized in the seminal papers of Bernshteyn \cite{Bernshteyn2021LLL} (see also \cite{elek2018qualitative}). Since then, many exciting results confirmed the importance of this approach \cite{OlegLLL,Bernshteyn2021local=cont,grebik_rozhon2021toasts_and_tails,grebik_rozhon2021LCL_on_paths,brandt2021homomorphism,brandt_chang_grebik_grunau_rozhon_vidnyaszky2021LCLs_on_trees_descriptive,BernshteynVizing}, some of which also build a connection to the area of random processes.
	Combined with the work in each field such as \cite{linial92LOCAL,HolroydSchrammWilson2017FinitaryColoring,DetMarks,GJKS,GJKS2,brandt_etal2016LLL,brandt_grids}, a rich picture of a common theory of locality starts to emerge. Let us give a brief informal description of the two fields.

    \medskip

	The definition of the \emph{$\local$ model of distributed computing} by Linial~\cite{linial92LOCAL} was motivated by the desire to understand distributed algorithms in large networks. 
	Informally, we have the following setup.
	Let $G$ be a finite graph, where each vertex is imagined to be a computer, which knows the size of the graph $n$, and perhaps some other parameters like the maximum degree $\Delta$. Every computer runs the same algorithm. In order to break the symmetries, each computer possesses some unique information, such as a natural number in the case of deterministic algorithms or 
	a random bit string in the case of randomized ones. In one round, each vertex can exchange any message with its neighbors and can perform an arbitrary computation. The goal is to find a solution to a given graph theoretic problem (more precisely, LCL problem, see below) in as few communication rounds as possible. The theory of distributed algorithms and the $\local$ model is extremely rich and deep, see, e.g., \cite{barenboim2016locality,chang_kopelowitz_pettie2019exp_separation,chang_pettie2019time_hierarchy_trees_rand_speedup,ghaffari_kuhn_maus2017slocal,kuhn16_jacm,RozhonG19}. 
	
	\textit{Descriptive} or \textit{measurable combinatorics} is an emerging field on the border of combinatorics, logic, group theory, and ergodic theory.
	The highlights of the field include the recent results on the \emph{circle squaring problem} of Tarski \cite{measurablesquare,marksunger,JordanCircle}, or on the \emph{Banach-Tarski paradox} \cite{doughertyforeman,marksungerbaire}, see also \cite{laczk, marksunger,measurablesquare,doughertyforeman,marksungerbaire,gaboriau,KST,DetMarks,millerreducibility,conley2020borel,OlegLLL,Bernshteyn2021LLL}, and  \cite{kechris_marks2016descriptive_comb_survey,pikhurko2021descriptive_comb_survey} for surveys. The typical setup in measurable combinatorics is that we have a graph with uncountably many connected components, each being a countable graph of bounded degree (we remark that a considerable amount of work has been also done in the context when the degrees can be infinite, see, e.g., \cite{KST,benen,lecomte2009dichotomy}, but in this paper we will not consider such graphs), and we want to find a solution to a given graph theoretic problem with some additional regularity (measurability) properties. 
    We refer the reader who is less familiar with distributed computing (respectively, with measurable combinatorics) to \cref{subsec:LOCAL} and to the first part of \cref{subsec:descriptivecombinatorics}.

	The role of the common link is played by so-called \emph{locally checkable labeling (LCL) problems} on graphs, which is a class of problems, where the correctness of a solution can be checked locally, that is, in a fixed-size neighborhood of vertices (we provide the exact definition in \cref{subsec:local_problems}, and use this intuitive description for the rest of the introduction).
	Examples include classical problems from combinatorics such as proper vertex coloring, proper edge coloring, perfect matching, and finding a maximal independent set.

	One can order the collection of LCL problems into \emph{complexity classes,} based on the asymptotic number of rounds required to solve them in the $\local$ model, and based on the measurability properties of the solution in measurable combinatorics. Hence the natural question of comparing these classes arises. In the insightful papers \cite{Bernshteyn2021LLL,Bernshteyn2021local=cont}, Bernshteyn showed the equality of some of these classes: roughly speaking, fast local algorithms solving LCL problems yield measurable solutions, and conversely, in the context of Cayley graphs of countable groups continuous solutions give back fast local algorithms.

    \medskip
	
	In this paper, we focus on $\Delta$-regular acyclic graphs and their finite analogues (see \cref{def:Deltareg} for a precise definition).
	The motivation for studying regular forests stems from different sources:
	\begin{itemize}
		\item The case of trees (and forests) is one of the highlights of the theory of the LOCAL model. For example, several lower bounds in distributed computing are proven for trees \cite{balliu_et_al:binary_lcl,balliu2019LB,balliu21dominatingset,balliu22seek,balliu20rulingset,brandt_etal2016LLL,brandt19automatic,Brandt20tightLBmatching,ChangHLPU20,goeoes14_PODC}; a classification of all potential complexity classes of LCL problems is also known in this context (see \cref{thm:basicLOCAL}).
		
		\item In many of the aforementioned results of measurable combinatorics, including the Banach-Tarski paradox, graphs where each component is an infinite $\Delta$-regular tree appear naturally. Moreover, such forests are also studied in the area of ergodic theory \cite{Bowen1,Bowen2} and random processes \cite{backhausz,backhausz2,lyons2011perfect}.
		
		\item Even in the case of relatively simple non-acyclic graphs (e.g., grids) one encounters undecidability barriers, making complete characterization results impossible, see e.g., \cite{brandt_grids, GJKS}. Thus, the class of acyclic graphs is often the largest natural collection, which is non-trivial, but still has the possibility of reaching a complete understanding.
		
		\item Comparing the techniques coming from these perspectives in this simple setting reveals already deep and unexpected connections.
	
	\end{itemize}

	On a high-level, our main results are the following. First, we extend the correspondence of Bernshteyn \cite{Bernshteyn2021local=cont} to regular forests, that is, we show that the class of LCL problems that admit a continuous solution and a solution by a fast deterministic algorithm are the same. Second, we show that, surprisingly, there is an exact equality ``high up'' in the hierarchy, namely, the class of LCL problems on regular forests that admit a local algorithm of local deterministic complexity $O(\log n)$ coincides with the class of LCL problems that admit so called \emph{Baire} measurable solutions, see \cref{fig:new_picture}.

	\medskip
	
	Before we discuss our results in more details, we remark that \cite{brandt2021homomorphism,brandt2021classes} and this paper extend the conference paper \cite{brandt_chang_grebik_grunau_rozhon_vidnyaszky2021LCLs_on_trees_descriptive}.
	In addition to the two perspectives studied here, we study LCL problems from the perspective of random processes in \cite{brandt2021classes}.
	We refer the reader to \cite{brandt2021classes} or \cite{brandt_chang_grebik_grunau_rozhon_vidnyaszky2021LCLs_on_trees_descriptive} for more details of the ``big picture.''

	\begin{figure}
		\centering
		\includegraphics[width=0.8\textwidth]{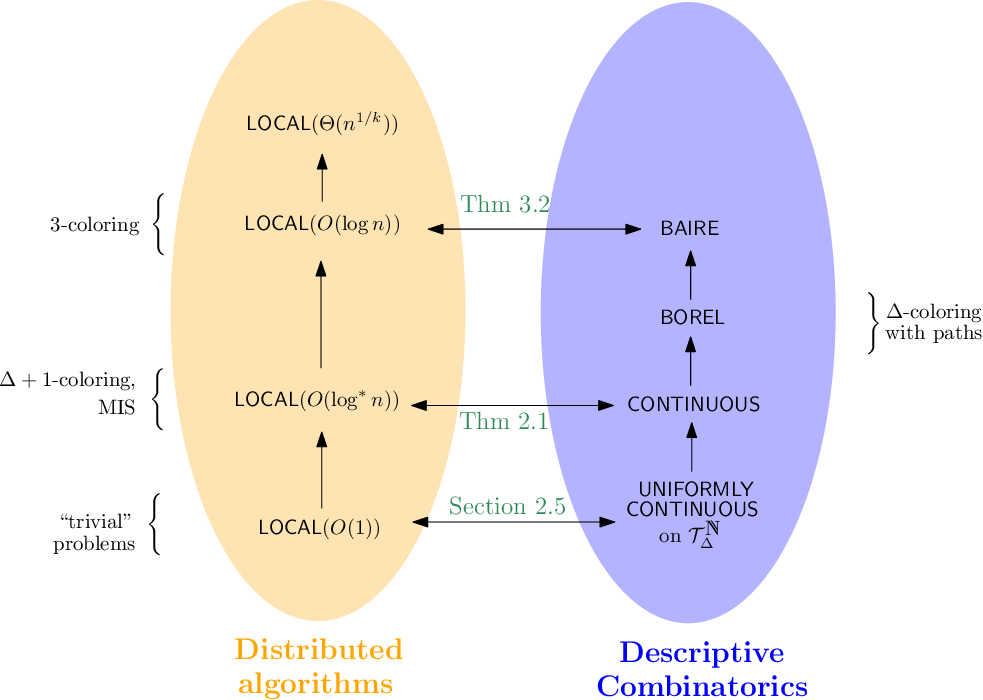}
		\caption{Complexity classes on $\Delta$-regular forests considered in the areas of distributed computing and measurable combinatorics. The left part shows deterministic complexity classes of distributed computing.
			The black arrows with two endpoints represent equalities between the complexity classes. 
			The black arrows with just one endpoint stand for proper inclusions between the classes.
			We highlight two phenomena. First, we have $\local(O(\log^* n))  = \continuous$, that is, there is a natural class of basic symmetry breaking problems in both areas. Second, we have $\local(O(\log n)) = \baire$, that is, the most powerful class in measurable combinatorics exactly matches the power of finite local construction with local complexity $O(\log n)$.
		}
		\label{fig:new_picture}
	\end{figure}

	\subsection*{The results}
	
	$\local(f(n))$ denotes the class of LCL problems of deterministic local complexity $f(n)$, and for a class of functions $\mathcal{F}$, we set $\local(\mathcal{F})=\bigcup_{f \in \mathcal{F}} \local(f)$ (see \cref{def:local_complexity}, and the discussion thereafter). The complexity classes of local problems that always admit a solution with the respective regularity properties that we consider in the paper are $\cont,\borel,\baire$; see  \cref{subsec:descriptivecombinatorics}.

	Once the complexity classes are defined, we can state the connections in a compact form, e.g., $\local(O(\log^* n))=\cont$.
	Indeed, as mentioned above, this equality was proven in \cite{Bernshteyn2021local=cont} for Cayley graphs  finitely generated countable groups.
	Our first result extends his theorem to the context of $\Delta$-regular forests.

	\begin{theorem}\label{main1}
		We have 
		$$\local(O(\log^* n))=\cont$$
		for LCL problems on $\Delta$-regular forests.
	\end{theorem}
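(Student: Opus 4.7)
The plan is to prove the two inclusions of the equality separately, with the reverse inclusion $\cont \subseteq \local(O(\log^* n))$ being the substantive part that adapts Bernshteyn's compactness argument from Cayley graphs to $\Delta$-regular trees.

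For the direction $\local(O(\log^* n)) \subseteq \cont$, I would start from the classical observation (going back to Linial and Naor--Stockmeyer) that any deterministic $O(\log^* n)$-round algorithm on a bounded-degree graph can be normalized into two phases: first, an $O(\log^* n)$-round symmetry-breaking phase that, starting from the unique identifiers, produces a proper $c$-coloring of the graph for some constant $c = c(\Delta)$; second, a constant-round deterministic rule that reads a constant-radius neighborhood of each vertex equipped with the coloring and outputs the final label. To mimic this in the $\cont$ setting, I would construct a continuous $\Aut(T_\Delta)$-equivariant proper $c$-coloring of $T_\Delta$ from the continuous input data (e.g.\ iid $[0,1]$-labels on vertices), by running a continuous version of Linial's iterative color-reduction. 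The constant-round post-processing rule is automatically continuous as a function of a bounded-radius labeled ball, and composing the two yields a continuous equivariant solution.

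For the harder direction $\cont \subseteq \local(O(\log^* n))$, the plan is the following. Given a continuous equivariant solution $f$ to the LCL $\Pi$, view $f$ as a continuous function on the compact space of labeled rooted $\Delta$-regular trees (where the labels are the continuous inputs). Using compactness of this space together with continuity of $f$, I would extract a uniform modulus of continuity: there is a finite radius $R$ such that the value of $f$ at a root depends only on the labeled ball of radius $R$ around that root. To convert $f$ into an $O(\log^* n)$ distributed algorithm, I would first run a Linial-style symmetry-breaking procedure on the finite input graph to obtain a proper $c$-coloring in $O(\log^* n)$ rounds; each vertex then collects its labeled $R$-neighborhood and applies the finitary rule extracted from $f$. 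The total round complexity is $O(\log^* n) + O(R) = O(\log^* n)$.

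The main obstacle is the compactness step. In Bernshteyn's Cayley graph setting, the generating set yields a canonical proper edge coloring of the graph, which keeps the relevant symmetry group small and makes the space of labeled rooted graphs behave nicely. On $\Delta$-regular trees without any built-in edge coloring the automorphism group $\Aut(T_\Delta)$ is vastly larger (indeed, as noted above, $\Delta$-edge coloring itself is not continuous by \cite{DetMarks}), and one has to set up the space of pointed labeled trees and its topology so that $\Aut(T_\Delta)$-equivariance plus continuity still force a uniform radius of dependence. A further subtle step is to verify that the locality extracted from the infinite model really applies on finite $n$-vertex trees, which requires arguing that an $O(\log^*n)$-round symmetry-breaking output together with a constant-radius neighborhood embeds correctly into the relevant labeled infinite tree on which $f$ is defined.
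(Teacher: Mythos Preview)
Your high-level plan is sound and its forward direction matches the paper. The gap lies in the compactness step for the converse. You assert that $f$ lives on ``the compact space of labeled rooted $\Delta$-regular trees (where the labels are the continuous inputs),'' but the paper's class $\cont$ is defined as admitting a continuous solution on \emph{every} continuous $\Delta$-regular tree on a zero-dimensional Polish space---there is no single canonical label space built into the definition. If you intend the free part of $[0,1]^{T_\Delta}$ or $\{0,1\}^{T_\Delta}$ modulo the root stabilizer, that space is only $G_\delta$, not compact, so continuity there does \emph{not} force a uniform radius of dependence. The paper resolves this by exhibiting a specific \emph{compact} continuous tree $\dist_\Delta$, the inverse limit over $k\in\mathbb N$ of the spaces of $k$-distance colorings of $T_\Delta$ modulo $\AutR$: each factor is compact because the coloring constraint is closed, and the limit is acyclic. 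Since $\Pi\in\cont$ gives in particular a continuous solution on $\dist_\Delta$, compactness there extracts a uniform radius $N$; on a finite tree one then computes $k$-distance colorings for all $k\le N$ in $O(\log^* n)$ rounds and applies the constant-radius rule (with a small greedy extension near virtual half-edges so that every vertex sees a full $N$-ball). You would need to supply such a compact model, not just assert one.

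Your ``main obstacle'' paragraph also misplaces the difficulty. The enlargement of $\AutT$ relative to the Cayley-graph setting is \emph{not} what makes the compactness step hard for \cref{main1}; once $\dist_\Delta$ is in hand, the argument is essentially Elek's and Bernshteyn's. The large automorphism group is the genuine obstacle only for the paper's stronger \cref{t:maincontinuous}---specifically, the implication that a continuous solution on the single non-compact tree $\lab_\Delta$ already forces $\Pi\in\cont$---and that is where Bernshteyn's continuous Lov\'asz Local Lemma and the approximation of hyperaperiodic elements enter. That stronger statement, however, is not needed for \cref{main1} itself.
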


    We note that \cite{Bernshteyn2021local=cont} includes the case of the $\Delta$-regular forest endowed with a proper $\Delta$-edge coloring.
	However, as the proper $\Delta$-edge coloring problem can be solved neither by a continuous nor by a Borel function \cite{DetMarks}, we see that these models are different.
	Importantly, the automorphism group of the infinite $\Delta$-regular tree without edge coloring is bigger than the one that preserves the coloring structure, hence the above theorem establishes an (a priori) significantly stronger result.
	
	Note also that the direction from left to right follows already from \cite{Bernshteyn2021LLL}, see also \cite{elek2018qualitative}.
	On a high level, it uses the speedup result of \cite{chang_kopelowitz_pettie2019exp_separation} that implies that every LCL problem of complexity $O(\log^* n)$ can be solved by first solving the \emph{$k$-distance coloring problem}\footnote{Recall that a vertex coloring is a solution to the $k$-distance coloring problem if no two vertices of graph distance at most $k$ get the same color. As we fix the maximum vertex degree to be at most $\Delta$, the number of colors that we are allowed to use is $O(\Delta^k)$, a quantity that does not depend on the size of the graph.} on the underlying graph, for some $k\in \mathbb{N}$, and then applying a constant local rule.
	These operations can be performed using continuous functions by \cite{KST}, see also \cite[Section~2]{Bernshteyn2021local=cont}.

	The other direction  should be interpreted as an equivalence of several models for continuous solutions, see \cref{sec:preliminaries} and \cref{t:maincontinuous}.
	Most notably it captures the $\{0,1\}$-model that has been studied extensively in the context of countable groups \cite{ColorinSeward,ST,GJKS,Bernshteyn2021local=cont}.
	Intuitively, a given LCL problem can be solved in the $\{0,1\}$-model if there is a map (local rule) $\fA$ with a subset of $\{0,1\}$-labeled neighborhoods of vertices of the infinite $\Delta$-regular tree $T_\Delta$ as a domain that assigns to each neighborhood from the domain some labeling such that the following holds.
	Whenever we are given a $\{0,1\}$-labeling of $T_\Delta$ that breaks all the symmetries of $T_\Delta$, then we can run $\fA$ at every vertex and the output given by $\fA$ is a solution to the given LCL problem.
	Here by ``running $\fA$'' we mean that each vertex explores its neighborhood until it encounters a $\{0,1\}$-labeled neighborhood that is in the domain of $\fA$; the output at the vertex is then the output of $\fA$ applied on this neighborhood of the vertex.
	The overall strategy to show that LCL problems solvable in this model have local deterministic complexity exactly $O(\log^* n)$ follows \cite{Bernshteyn2021local=cont}, in particular we use Bernshteyn's continuous version of the Lov\'asz Local Lemma, and the framework from \cite{GJKS}.
	However, as our underlying graph is not induced by a group action some additional non-trivial arguments are needed.
	It is an interesting open problem to study the $\{0,1\}$-labeling model on other classes of (structured) graphs.

	Next we describe the main result of the paper, namely the equality $\LOCAL(O(\log n))=\baire$. The key underlying concept is a combinatorial condition called \emph{$\ell$-fullness}. Roughly speaking, it is a property of the LCL problem which guarantees that we may pass to a subset of labels, say $S$, such that sufficiently (farther than $\ell$) spaced partial solutions on acyclic graphs can be extended to full ones (see \cref{sec:baire} for the definition). For example, $3$-coloring is $3$-full, while $2$-coloring is not $\ell$-full for any $\ell>0$.

    Bernshteyn has pointed \cite{Bernshteyn_work_in_progress} out that $\ell$-fullness completely characterizes the LCL problems in the class $\baire$. We complement this result as follows. 
	First, we prove that any $\ell$-full problem has local complexity $O(\log n)$.
	Interestingly, this implies that all complexity classes considered in \cite{brandt2021classes}, most notably LCL problems that admit factor of iid solution, are contained in $\local(O(\log n))$. 
	In particular, this yields that the existence of \emph{any} uniform algorithm implies a local distributed algorithm for the same problem of local complexity $O(\log n)$, see \cite{brandt2021classes} for details.
	We obtain this result via the well-known rake-and-compress decomposition~\cite{MillerR89}.  
	
	On the other hand, we prove that any problem in the class $\local(O(\log n))$ satisfies the $\ell$-full condition. 
	The proof combines a machinery  developed by Chang and Pettie~\cite{chang_pettie2019time_hierarchy_trees_rand_speedup}
	with additional ideas.
	In this proof we construct recursively a sequence of sets of rooted, layered, and partially labeled trees, where the partial labeling is computed by simulating any given $O(\log n)$-round distributed algorithm, 
	and then the set  $S$ meeting the $\ell$-full condition is constructed by considering all possible extensions of the partial labeling to complete a correct labeling of these trees. In summary, we have the following rather surprising equality.
	
	\begin{theorem} We have
		$$\local(O(\log n)) = \baire$$
		for LCL problems on $\Delta$-regular forests.
	\end{theorem}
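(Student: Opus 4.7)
The plan is to characterize both complexity classes by a single combinatorial property, the $\ell$-full condition, and prove each containment through this intermediate equivalence. The excerpt already records Bernshteyn's characterization
\[
\Pi \in \baire \iff \Pi \text{ is } \ell\text{-full for some } \ell \in \N,
\]
whose nontrivial direction is implemented via a Baire measurable toast construction (\cref{pr:BaireToast} in the paper). It therefore suffices to prove the two implications $\ell$-full $\Rightarrow \local(O(\log n))$ and $\local(O(\log n)) \Rightarrow \ell$-full.

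For the first implication I would deploy the rake-and-compress decomposition of Miller and Reif. Iterating the rake step (remove all leaves) together with the compress step (remove all maximal paths of degree-two vertices) exhausts any finite tree in $O(\log n)$ layers, and the decomposition itself can be computed in $O(\log n)$ rounds because each individual layer requires only constant-distance information. The labeling is then assembled in reverse order, from the outermost layer inward. A compressed path, once its two endpoints have been labeled by elements of the designated set $S$ witnessing $\ell$-fullness, can be consistently labeled along its interior by the definition of $\ell$-fullness, provided the path has length at least $\ell$; one tunes the rake-and-compress parameters so that all compressed paths are long enough. Raked pieces are handled by brute force since they have constant size, using nonemptiness of $S$ to initialize the boundary. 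Each of the $O(\log n)$ layers contributes $O(1)$ rounds, yielding total complexity $O(\log n)$.

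The second implication, which I expect to be the main obstacle, extracts the set $S$ and the constant $\ell$ directly from a given algorithm $\fA$ of complexity $T(n) = O(\log n)$. Following the Chang--Pettie framework, I would build recursively a sequence of sets of rooted, layered, partially labeled trees. At each recursion step we collect all rooted subtrees that can appear at a distinguished vertex inside some legal input of size $n$, together with the label that $\fA$ produces there after $T(n)$ rounds; since $T(n) = O(\log n)$ and $\fA$ has only finitely many possible local rules on $T(n)$-balls of bounded maximum degree, a pigeonhole / compactness argument over growing $n$ stabilizes the collection of observed labels. The set $S$ consists of those labels whose local output can be extended along arbitrarily long paths by outputs of the same algorithm on compatible trees. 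A quantitative counting then pins down a constant $\ell$, depending only on $\Delta$ and the hidden constants in $T(n)$, for which $S$ satisfies the $\ell$-full condition: any two labels in $S$ placed at the endpoints of a path of length $\geq \ell$ can be completed because, by construction, both arise as outputs of $\fA$ on a common larger input whose intermediate labels are already determined by $\fA$.

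Combining the two implications with Bernshteyn's equivalence $\baire = \{\ell\text{-full LCLs}\}$ delivers $\local(O(\log n)) = \baire$. The technical heart lies in the recursive construction for the second implication: ensuring that the partial labelings extracted at different recursion depths remain mutually compatible, so that they witness a single constant $\ell$ rather than one that drifts with the depth, and that the logarithmic bound on $T(n)$ converts into the correct quantitative control on the depth and branching of the recursion.
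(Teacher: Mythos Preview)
Your proposal is correct and follows essentially the same route as the paper: both directions are mediated by the $\ell$-full condition, with rake-and-compress (\cref{lem-rake-and-compress-modified}) giving $\ell$-full $\Rightarrow \local(O(\log n))$ and the Chang--Pettie recursive tree machinery giving the converse. The paper's execution of the hard direction is somewhat more structured than your sketch suggests---it introduces an equivalence relation on trees with poles, proves a pumping lemma for bipolar trees, builds a fixed point $W^\ast$ under a gluing operator $Z_f$, and then extracts $\fV'$ not from the algorithm's direct outputs but from all \emph{correct extensions} of the partial labeling on the constructed trees; the $\ell$-full property is then verified by explicit path-finding lemmas rather than by the ``common larger input'' heuristic you describe---but the overall strategy is the same.
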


    Note that the formal connections mentioned above utilize the fact that a very efficient local algorithm 
    sees only a small proportion of the vertices, in particular, we can run it on an infinite graph, pretending that it is finite.
    In contrast, an $O(\log n)$ algorithm will see vertices of degree $<\Delta$, or even leaves, hence running it on a $\Delta$-regular infinite forest seems to be an impossible task on the first sight. Thus, the above equality provides a deeper structural link between the two contexts.

	The combinatorial characterization of the local complexity class $\local(O(\log n))$ on $\Delta$-regular forests is interesting from the perspective of distributed computing alone. 
	This result can be seen as a part of a large research program aiming at the combinatorial classification of possible local complexities on various graph classes~\cite{balliu2020almost_global_problems,brandt_etal2016LLL,chang_kopelowitz_pettie2019exp_separation,chang_pettie2019time_hierarchy_trees_rand_speedup,chang2020n1k_speedups,Chang_paths_cycles,balliu2021_rooted_trees,balliu_et_al:binary_lcl,balliu2018new_classes-loglog*-log*}. 
	That is, we wish not only to understand the possible complexity classes (see the left part of \cref{fig:new_picture} for the possible local complexity classes on regular forests), but also to find combinatorial characterizations of problems in those classes that allow us to  efficiently decide for a given problem which class it belongs to.  
	As mentioned above, even for grids with input labels, it is \emph{undecidable} whether a given local problem can be solved in $O(1)$ rounds~\cite{naorstockmeyer,brandt_grids}, since
	local problems on grids can be used to simulate a Turing machine.
	This undecidability result does not apply to paths and forests, hence for these graph classes it is still hopeful that we can find simple and useful characterizations for different classes of distributed problems. 

    \medskip

	The paper is organized as follows.
	In \cref{sec:preliminaries}, we recall all the notation, definitions, and standard results.
	In \cref{sec:Continuous} we demonstrate the equality between the classes $\local(O(\log^* n))$ and $\cont$. 
	In \cref{sec:baire} and \cref{sec:implieslfull}, we prove the equality of the classes $\local(O(\log n))$ and $\baire$.

\section{Preliminaries}
\label{sec:preliminaries}

In this section, we explain the setup we work with, the main definitions, techniques and results. The class of graphs that we consider in this work are either infinite $\Delta$-regular forests, or their finite analogues that we define formally in \cref{subsec:local_problems}.
	We always assume that $\Delta > 2$, since
	the case $\Delta = 2$, that is, studying paths, behaves differently and is easier to understand, see \cite{grebik_rozhon2021LCL_on_paths}. 
	Unless stated otherwise, we do not consider any additional structure on the graphs.
	
	A graph $G$ is a pair $(V(G),E(G))$, where $V(G)$ is a vertex set and $E(G)\subseteq \binom{V(G)}{2}$.
	We write $B_G(v,t)$ for the $t$-neighborhood of a vertex $v\in V(G)$.
	We denote by $N_G(A)$ the set of vertices in $G$ that are connected by an edge with some element from $A\subseteq V(G)$.
	The graph distance $\distance_G$ is defined in the standard way.
	For $L\in \mathbb{N}$, we define the $L$-power graph of $G$ as the graph on the same vertex set where $v,w\in V(G)$ form an edge if and only if $0<\distance_G(v,w)\le L$.
	If we talk about forests, that is, acyclic graphs, we use $T$ instead of $G$.
	To emphasize the difference between Borel (or continuous) graphs and discrete (at most countable) graphs, we use $\fG$ and $\fT$ in the former case.
	Vertices of discrete graphs are denoted by $v,w,\dots$ and of Borel graphs by $x,y,\dots$ or $\alpha,\beta,\dots$.
	
	We write $T_\Delta$ for the $\Delta$-regular tree and $\mathfrak{r}$ for a fixed vertex of $T_\Delta$ that we call the \emph{root} of $T_\Delta$.
	As $T_\Delta$ is the main object of our study, we omit the subscript in the notation above whenever we talk about $T_\Delta$ (that is, in the case of $B(\cdot,\cdot)$ and $\distance(\cdot,\cdot)$).
	Similarly, we use $v\in T_\Delta$ instead of $v\in V(T_\Delta)$.
	Recall also that for $k\in \mathbb{N}$ and $v\in T_\Delta$, we have $|B(v,k)|=1+\Delta\sum_{i=0}^{k-1}(\Delta-1)^i$; we mostly use the upper bound $2\Delta^k$ which holds for every $k\in \mathbb{N}$.
	
	A \emph{$k$-distance coloring} of $G$ is a coloring of the $k$th power of $G$ with $2\Delta^k$-colors, or equivalently, a coloring of $G$ where vertices of distance at most $k$ have different colors. Observe that the degree of the $k$th power of $G$ is bounded by $2\Delta^k-1$, hence using $2\Delta^k$-colors corresponds to the greedy coloring.

	For sets $X$ and $Y$, we use the notation $c:X \rightharpoonup Y$ for a partial function $c$ from $X$ to $Y$.
	For functions $f,g:\N \to \N$ we use the notation $f(n)=\Omega(g(n))$ if $\liminf_{n \to \infty} \frac{f(n)}{g(n)}>0$ and $f(n)=\Theta(g(n))$ if $f(n)=O(g(n))$ and $f(n)=\Omega(g(n)).$

\subsection{Local Problems on $\Delta$-regular forests}
\label{subsec:local_problems}

The problems we study in this work are locally checkable labeling (LCL) problems, which, roughly speaking, are problems that can be described via local constraints that have to be satisfied in a neighborhood of each vertex.
In the context of distributed algorithms, these problems were introduced in the seminal work by Naor and Stockmeyer~\cite{naorstockmeyer}, and have been studied extensively since.
In the modern formulation introduced in~\cite{brandt19automatic}, instead of labeling vertices or edges, LCL problems are described by labeling half-edges, i.e., pairs of a vertex and an adjacent edge.
This formulation is very general in that it not only captures vertex and edge labeling problems, but also others such as orientation problems, or combinations of all of these types.
Before we can provide this general definition of an LCL, we need to introduce some further definitions.

As mentioned above, one of the main motivations behind our work is to develop the unified theory of distributed computing and measurable combinatorics. In the context of our current investigation, it turns out that one has to modify the notion of finite graphs of degree $\leq \Delta$ to obtain a perfect analogue of $\Delta$-regular infinite forests in the finite setup. 
Namely, to handle vertices with degrees less than $\Delta$, we add ``virtual" half-edges to these. 

Let us start by formalizing the notion of a half-edge.

\begin{definition}[Half-edge]
	\label{def:half-edge}
	Given a graph $G=(V(G),E(G))$, a \emph{half-edge} is a pair $(v, e)$ where $v \in V(G)$ is a vertex, and either $e$ is an edge incident to $v$, or $e \not \in \binom{V(G)}{2}$.  We say that a half-edge $(v, e)$ is \emph{adjacent} to a vertex $w\in V(G)$ if $w = v$. 
	Equivalently, we say that $w$ is \emph{contained} in $(v,e)$ if $(v,e)$ is adjacent to $w$. 
	We say that $(v, e)$ \emph{belongs} to an edge $e'\in E(G)$ if $e' = e$.
	
	A half-edge is called \emph{real}, if it belongs to some edge, and it is \emph{virtual}, otherwise.		
\end{definition}

In a $\Delta$-regular forest, we will require that every vertex is contained in exactly $\Delta$-many half-edges, however, of course, not every such half-edge will be real.  See also \cref{fig:Delta_reg_tree}.

\begin{definition}[$\Delta$-regular forest]
	\label{def:Deltareg}	
	A \emph{$\Delta$-regular forest} $T$ is a triplet $(V(T), E(T), H(T))$, where 
	\begin{itemize}
		\item $(V(T),E(T))$ is an acyclic graph of maximum degree $\Delta$,
		\item  $H(T)=\Hr(T) \cup \Hv(T)$ consists of half-edges, where $\Hr(T)$ and $\Hv(T)$ stand for the collection of real and virtual half-edges, such that
		each vertex $v\in V(T)$ is contained in exactly $\Delta$-many half-edges, and if $(v,e),(v',e) \in \Hv(T)$, then $v=v'$,
		\item if $V(T)$ is infinite, then $\Hv(T)=\emptyset$, that is, $T$ is a disjoint union of copies of $T_\Delta$.
	\end{itemize}
	A \emph{half-edge labeling} is a function $c \colon H(T) \to \Sigma$ that assigns to each half-edge an element from some label set $\Sigma$. 
	
\end{definition}

\begin{figure}
	\centering
	\includegraphics{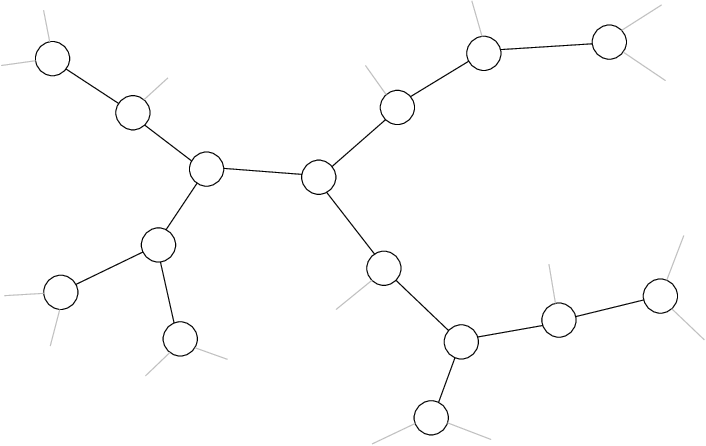}
	\caption{A $3$-regular tree on 15 vertices. Every vertex has $3$ half-edges but not all half-edges lead to a different vertex. }
	\label{fig:Delta_reg_tree}
\end{figure}

As we are considering forests in this work, each LCL problem can be described in a specific form that provides two lists, one describing all label combinations that are allowed on the half-edges adjacent to a vertex, and the other describing all label combinations that are allowed on the two half-edges belonging to an edge.\footnote{Every problem that can be described in the form given by Naor and Stockmeyer~\cite{naorstockmeyer} can be equivalently described as an LCL problem in this list form, by simply requiring each output label on some half-edge $h$ to encode all output labels in a suitably large (constant) neighborhood of $h$ in the form given in~\cite{naorstockmeyer}.}
We arrive at the following definition for LCL problems on $\Delta$-regular forests.\footnote{Note that the defined LCL problems do not allow so-called input labels.}

\begin{definition}[LCL problems on $\Delta$-regular forests]\label{def:LCL_trees}
	A \emph{locally checkable labeling problem}, or \emph{LCL} for short, is a triple $\Pi=(\Sigma,\fV,\fE)$, where $\Sigma$ is a finite set of labels, $\fV$ is a subset of unordered cardinality-$\Delta$ multisets\footnote{Recall that a multiset is a modification of the concept of sets, where repetition is allowed.} of labels from $\Sigma$, and $\fE$ is a subset of unordered cardinality-$2$ multisets of labels from $\Sigma$.
	
	We call $\fV$ and $\fE$ the \emph{vertex constraint} and \emph{edge constraint} of $\Pi$, respectively.
	Moreover, we call each multiset contained in $\fV$ a \emph{vertex configuration} of $\Pi$, and each multiset contained in $\fE$ an \emph{edge configuration} of $\Pi$.
	
	Let $T$ be a $\Delta$-regular forest and $c:H(T)\to \Sigma$ a half-edge labeling of $T$ with labels from $\Sigma$.
	We say that $c$ is a \emph{$\Pi$-coloring}, or, a \emph{solution} of $\Pi$, if, for each vertex $v$ of $T$, the multiset of labels assigned to the half-edges adjacent to $v$ is contained in $\fV$, and, for each edge $e$ of $T$, the cardinality-$2$ multiset of labels assigned to the half-edges belonging to $e$ is an element of $\fE$.
\end{definition}

    In the main body of the paper, we will refer to LCL problems on $\Delta$-regular forests simply as LCL problems. Let us illustrate the difference between our setting and the ``standard setting'' without virtual half-edges on the perfect matching problem.
	A standard definition of the perfect matching problem asks to find a subset of edges in such a way that each vertex is covered by exactly one edge. Notice that there is no local algorithm to solve this problem on the class of finite trees (without virtual half-edges and even restricting to graphs of even size). Indeed, already on the class of oriented paths with an even number of vertices, the unique perfect matching yields a 2-coloring: every vertex is incident to a matched edge, and we may color a vertex according to whether this matched edge is oriented towards it or away from it. Along the path the colors then alternate, so any local algorithm for perfect matching would immediately give a local algorithm for 2-coloring on oriented paths, contradicting the folklore fact that there is no efficient distributed algorithm for 2-coloring (see, e.g., \cite[Section 2.2.2]{suomela2014distributed}). One can also modify these examples to have higher degrees by adding a bounded amount of new vertices to each vertex on the path. 
	
	However, the natural version of the perfect matching problem in our setting is that every vertex needs to pick exactly one half-edge (real or virtual) in such a way that both endpoints of each edge are either picked or not picked.
	It follows from our main result (and also not hard to see directly) that this problem can be solved with an $O(\log n)$ local algorithm, if $\Delta>2$. 
	
	\begin{remark}\label{rem:equivLCL}
		The formalism with half-edges is convenient for subsequent analysis.
		However, there are at least two other ways to define finite $\Delta$-regular forests and LCL problems that would serve the same purpose
		(see also \cite{balliu2019hardness_homogeneous} for a closely related definition of homogeneous problems):
		
		{\bf (a)}
		Consider the class of graphs with vertices of degree $\Delta$ and vertices of degree $1$, and define the class of LCL problems so that we leave the degree $1$ vertices unconstrained with respect to the vertex constraint.
		To see the equivalence between these models, consider our definition of a finite $\Delta$-regular forest and append a leaf to the ``other side'' of each virtual edge.
		
		{\bf (b)}
		Consider the class of arbitrary finite forests of maximum degree $\Delta$, and define the class of LCL problems so that for vertices of degree smaller than $\Delta$, we require the multiset of labels assigned to the half-edges to be extendable to some multiset in $\fV$ of size $\Delta$.
		This model is also clearly equivalent to our model from the perspective of local complexity of LCL problems.
		Its main advantage is that it directly implies that the set of LCL problems on finite $\Delta$-regular forests is a \emph{subset} of the set of LCL problems on finite forest of degree at most $\Delta$, cf. \cref{thm:basicLOCAL}.
	\end{remark}

\subsection{The $\local$ model}\label{subsec:LOCAL}

In this section, we define local algorithms and local complexity.
Recall that our setup is informally the following. We are given a graph $G$, where each vertex is a computer, which knows the size of the graph $n$, and perhaps some other parameters like the maximum degree $\Delta$. In the case of randomized algorithms, each vertex has access to a private random bit string, while in the case of deterministic algorithms, each vertex is equipped with a unique identifier from a range polynomial in the size $n$ of the network. Every vertex must run the same algorithm. In one round, each vertex can exchange any message with its neighbors and can perform an arbitrary computation. The goal is to find a solution to a given LCL in as few communication rounds as possible. An algorithm is correct if and only if the collection of outputs at all vertices constitutes a correct solution to the problem. 

In this work we focus exclusively on \emph{deterministic} local algorithms. As the allowed message size is unbounded, a $t$-round $\local$ algorithm can be equivalently described as a function that maps $t$-neighborhoods (with identifiers) to outputs---the output of a vertex is then simply the output of this function applied to the $t$-neighborhood of this vertex. Hence, we arrive at the following definition.

\begin{definition}[Local algorithm]
	\label{def:local_algorithm} Let $\Sigma$ be a set of labels.
	A \emph{distributed local algorithm} $\fA$ of local complexity $t(n)$ is a function that takes as an input the (isomorphism type as a rooted graph) $t(n)$-neighborhood of a vertex $u$ together with the identifiers of each vertex in the neighborhood and the size of the graph $n$, and outputs a label from $\Sigma$ for each half-edge adjacent to $u$.
	
	Applying an algorithm $\fA$ on an input graph $G$ of size $n$ means that the function is applied to a $t(n)$-neighborhood $B_G(u, t(n))$ of each vertex $u$ of $G$. The output of the function is a labeling of the half-edges around the vertices. 
\end{definition}

\begin{definition}[Local complexity]
	\label{def:local_complexity}
	We say that an LCL problem $\Pi$ has a \emph{deterministic local complexity} $t(n)$ if there is a local algorithm $\fA$ of local complexity $t(n)$ such that when we run $\mathcal{A}$ on an input graph $G$, with each of its vertices having a unique identifier from $[n^{C}]$ for some absolute constant $C\geq 1$, $\fA$ always returns a correct solution to $\Pi$.  
	We write $\Pi \in \local( O(t(n)) )$. 
\end{definition}

We note for completeness that \emph{randomized local complexity} is defined similarly, the difference is that the unique identifiers at each vertex are replaced with sequences of independent random strings, and we allow the algorithm to fail with probability at most $1/n$.

Recall that $\log^*n$ stands for the iterated logarithm of $n$, that is, $\log^*n$ is the number of times the logarithm function must be iteratively applied to $n$ before the result is $\leq 1$. The threshold $\log^*n$ plays crucial role in distributed computing, as it is a classical result that graphs with degrees bounded by $\Delta$ can be $(\Delta+1)$-colored in $O(\log^*n)$-many rounds, or in our terminology, $(\Delta+1)$-coloring is in $\local(O(\log^*n))$. Note that in these bounds $\Delta$ is assumed to be constant, that is, the constants in the $O(\cdot)$ notation depend on $\Delta$.

\paragraph{Classification of Local Problems on $\Delta$-regular Forests}\footnote{Traditionally the case of acyclic graphs is referred to in the distributed computing literature as trees.}
There has been a lot of work done aiming to classify all possible local complexities of LCL problems on bounded degree finite forests. This classification was recently finished (see the citations in \cref{thm:basicLOCAL} below).
Even though the results in the literature are stated for the classical notion of forests with a degree bounded by constant, we note that it follows directly from the equivalent reformulation in \cref{rem:equivLCL}~(b) that the same picture emerges if we restrict ourselves even further to $\Delta$-regular forests, see also \cite{brandt2021classes}.
For our purposes, we only need to recall the classification in the deterministic setting.

\begin{theorem}[Classification of deterministic local complexities of LCL problems on $\Delta$-regular forests \cite{naorstockmeyer,chang_kopelowitz_pettie2019exp_separation,chang_pettie2019time_hierarchy_trees_rand_speedup,chang2020n1k_speedups,balliu2020almost_global_problems,balliu2019hardness_homogeneous,GrunauRozhonBrandt2022complexity_o(logstar(n))}]
	\label{thm:basicLOCAL}
	Let $\Pi$ be an LCL problem on $\Delta$-regular forests. Then the deterministic local complexity of $\Pi$ on $\Delta$-regular forests is one of the following:
	\begin{enumerate}
		\item \label{o(1)} $O(1)$,
		\item \label{logstarn}$\Theta(\log^*n)$,
		\item \label{logn} $\Theta(\log n)$,
		\item \label{theta} $\Theta(n^{1/k})$ for some $k \in \mathbb{N}$.
	\end{enumerate}
	Moreover, all classes are nonempty.
\end{theorem}

	The ``Moreover" part of \cref{thm:basicLOCAL} for classes \eqref{o(1)}-\eqref{logn} are witnessed by any trivial LCL problem, $\Delta+1$ coloring problem, and $\Delta$ coloring problem, respectively, see for instance \cite{brandt2021classes}.
	The fact that the classes in \eqref{theta} are non-empty as well is demonstrated in a subsequent work \cite{brandt2021classes}.
Let us mention that a classification is known also for randomized local complexities, but, as we do not need it here, we refer the reader to e.g.~\cite{brandt2021classes} for more details.

Deciding the  local complexity of a given LCL  on  bounded-degree forests is $\mathsf{EXPTIME}$-hard~\cite{chang2020n1k_speedups}. Therefore, to obtain simple and useful characterizations for each complexity class of LCL problems on bounded-degree forests, it is necessary that we restrict ourselves to special classes of LCL problems. As many natural LCL problems (e.g., the proper $k$-coloring problem for any $k$) on bounded-degree forests are also LCL problems on $\Delta$-regular forests, this further motivates the study of LCL problems on $\Delta$-regular forests. 

Let us also mention that in a subsequent work it was shown that there is a polynomial-time algorithm to decide the local complexity of a given LCL  on  $\Delta$-regular forests once the problem has complexity $\Omega(\log n)$, i.e., it falls in some of the cases \eqref{logn} or \eqref{theta} above ~\cite{balliu2022efficient}.

\subsection{Measurable combinatorics}
\label{subsec:descriptivecombinatorics}

Before we define formally the measurable combinatorics complexity classes, we give a high-level overview on their connection to distributed computing for the readers more familiar with the latter.

The complexity class that partially captures deterministic local complexity classes and hints at the additional computational power that one can get in the infinite setting is called $\borel$.
First, by a result of Kechris, Solecki and Todor\v{c}evi\'c \cite{KST} the \emph{maximal independent set problem} is in this class for any bounded degree Borel graph.\footnote{That is, it is possible to find a Borel maximal independent set; see the precise definitions later in this section.}
In particular, this yields that $\borel$ contains the class $\local(O(\log^* n))$ by the characterization of \cite{chang_kopelowitz_pettie2019exp_separation}, see \cite{Bernshteyn2021LLL}. 
Moreover, (see the definition below) $\borel$ is closed under countably many iterations of the operations of finding a maximal independent set in some power of the graph, and of applying a constant local rule that takes into account what has been constructed previously.
Therefore, the constructions in the class $\borel$ can use some unbounded, global, information. 

The ``truly'' local class in measurable combinatorics is the class $\CONT$.
As above, the inclusion $\local(O(\log^* n))\subseteq \CONT$ holds in most reasonable classes of bounded degree graphs, since it is possible to solve the maximal independent set problem in a continuous manner, see \cite[Lemma~2.2]{Bernshteyn2021local=cont}.
In the paper \cite{Bernshteyn2021local=cont}, Bernshteyn showed that, in the context of \emph{Cayley graphs} of countable groups, this class admits various equivalent definitions and the inclusion is in fact equality.

Another complexity class that we consider in this paper is the class $\baire$.
This is a topological relaxation of the class $\borel$, namely, an LCL is in the class $\baire$ if there is a Borel solution that can be incorrect on a topologically negligible set.
The main advantage of the class $\baire$ is that every bounded degree Borel graph admits a hierarchical decomposition--called \emph{toast}--in the class $\baire$.
The independence of colorings on a tree together with this structure allows for a combinatorial characterization of the class $\baire$, which was proven by Bernshteyn, see also \cref{sec:baire}.

\paragraph{Basic definitions}
A \emph{Polish space} is a separable, completely metrizable topological space. A prime example of a Polish space is the real numbers. A Polish space is called \emph{zero-dimensional} if it admits a basis consisting of clopen sets. The two most important examples of zero-dimensional Polish spaces are the collection of irrational numbers (as a subspace of $\mathbb{R
}$) and the Cantor set. In fact, the former captures the structure of general zero-dimensional Polish spaces, while the latter captures the structure of compact ones (see, \cite[7.B, 7.C]{kechrisclassical} for precise statements). Recall that the collection of \emph{Borel subsets} of a topological space is the minimal family of sets which contains all open sets and closed under taking complements and countable unions. The family of Borel sets has sufficient closure properties, while it still behaves intuitively, for example, such sets do not exhibit the paradoxes arising from the Axiom of Choice (e.g., Banach--Tarski).
Since the edge set of a graph on a space $X$ can be identified with a subset of $X^2$, which carries naturally a canonical topology or product Borel structure, we can talk about a graph being closed or Borel etc.

Recall that in a topological space $(X,\tau)$, a set $C\subseteq X$ is \emph{$\tau$-meager} (or meager if the topology is clear) if it can be covered by countably many $\tau$-closed sets with empty interior.
In Polish spaces, the collection of meager sets forms a $\sigma$-ideal, \cite[Section~8]{kechrisclassical}.
This provides the notion of small sets in the same way as does the $\sigma$-ideal of null sets in measure spaces.
A set $C\subseteq X$ is \emph{comeager} if $X\setminus C$ is meager.

A \emph{standard Borel space} is a pair $(X,\mathcal{B}(X))$ where $X$ is a set and $\mathcal{B}(X)$ is a $\sigma$-algebra of subsets of $X$, and $(X,\mathcal{B}(X))$ is isomorphic to $(Y,\mathcal{B}(Y))$, where $Y$ is Polish, and $\mathcal{B}(Y)$ is the collection of Borel subsets of $Y$. Let $(X,\mathcal{B}(X))$ be a standard Borel space. We say that a Polish topology $\tau$ on $X$ is \emph{compatible} if the fixed original $\sigma$-algebra on $X$ and the $\tau$-Borel $\sigma$-algebra are the same.

We refer the reader to \cite{pikhurko2021descriptive_comb_survey,kechris_marks2016descriptive_comb_survey,Bernshteyn2021LLL,kechrisclassical}, or to \cite[Introduction,~Section~4.1]{grebik_rozhon2021LCL_on_paths} and \cite[Section~7.1,~7.2]{grebik_rozhon2021toasts_and_tails} for intuitive examples and standard facts of descriptive set theory.

\subsubsection{$\cont$}\label{subsub:continuous}

We follow Bernshteyn \cite[Section 2]{Bernshteyn2021local=cont} and say that a graph $\mathcal{G}=(V(\mathcal{G}),E(\mathcal{G}))$, is a \emph{continuous graph}, if $V(\mathcal{G})$ is a zero-dimensional Polish space and the neighborhood $N_\fG(U)$ of any clopen set $U$ is clopen.
An easy argument shows that if $\mathcal{G}$ is locally finite and continuous, then the set $\{(x,y):\{x,y\} \in E(\mathcal{G})\}$ is closed in $V(\mathcal{G})^2\setminus \{(x,x):x \in V(\mathcal{G})\}$. In particular, if $V(\mathcal{G})$ is zero-dimensional, using the (open) factor map $V(\mathcal{G})^2\setminus \{(x,x):x \in V(\mathcal{G})\} \to \binom{V(G)}{2}$ we obtain a zero-dimensional Polish topology on $E(\mathcal{G})$.

\begin{definition}[Continuous forest]
	A \emph{continuous ($\Delta$-regular) forest} $\fT$ is an acyclic $\Delta$-regular continuous graph.
	We define $H(\fT)$ to be the set of all half-edges of $\fT$, i.e., the collection of pairs $(v,e)$, where $v \in V(\mathcal{T}), e \in E(\mathcal{T})$ and $e$ is incident to $v$. 
\end{definition}

Note that $H(\mathcal{T})$ can be identified with the ordered pairs coming from $E(\mathcal{T})$, hence carries a canonical zero-dimensional topology.

\begin{definition}[$\cont$]
	Let $\Pi=(\Sigma,\fV,\fE)$ be an LCL.
	We say that $\Pi$ is in the class $\cont$ if for every continuous forest $\fT$ there is a \emph{continuous function} $f:H(\fT)\to \Sigma$ that is a $\Pi$-coloring of $\fT$.
\end{definition}

We remark that this definition of the class $\cont$ is different than the one given in the introduction (which is the intuitive description of the $\{0,1\}$-model below).
In \cref{sec:Continuous}, we show that these definitions coincide.
In the next sections, we describe particular instances of continuous forests.

\paragraph{The space of labelings}
Recall that $T_\Delta$ is the $\Delta$-regular tree and we pick a distinguished vertex $\mathfrak{r}$ which we call \emph{the root}.
We write $\AutT$ for the group of all automorphisms of $T_\Delta$ and $\AutR$ for the subgroup that fixes $\mathfrak{r}$.
Endowed with pointwise convergence, or equivalently, subspace topology from ${T_\Delta}^{T_\Delta}$, $\AutT$ is a Polish group and $\AutR$ is a compact subgroup with countable index (see, \cite[Theorem 2.4.4]{gao2008invariant}).
We write $\id_{T_\Delta}$ for the identity in $\AutT$.

For a set $b$, we define $b^{T_\Delta}$ to be the space of all $b$-labelings of vertices of $T_\Delta$.
We endow $b^{T_\Delta}$ with the product topology, where $b$ is viewed as a discrete space.
The shift action $\AutT\curvearrowright b^{T_\Delta}$ is defined as 
$$g\cdot x(v)=x(g^{-1}(v)),$$
where $g\in \AutT$, $x\in b^{T_\Delta}$ and $v\in T_\Delta$.

\begin{definition}
	Write $b^{T_\Delta}/\sim$ for the quotient space of the action $\AutR\curvearrowright b^{T_\Delta}$.
	That is, $x\sim y$ if and only if there is $g\in \AutR$ so that $g\cdot x=y$.
	Given $x\in b^{T_\Delta}$, we let $[x]\in b^{T_\Delta}/\sim$ to be the $\sim$-equivalence class of $x$.
	
	The graph $\fG_{b,\Delta}$ on $b^{T_\Delta}/\sim$ is defined as follows: $[x]$ and $[y]$ form an edge in $\fG_{b,\Delta}$ if and only if there is a $g\in \AutT$ that moves the root to one of its neighbors so that $[g\cdot x]= [y]$.
\end{definition}

It is easy to see that $\fG_{b,\Delta}$ is not a continuous forest, as it contains cycles and even loops.
For convenience, we extend the notion of a continuous graph to this setting. The following statement along with the construction outlined above has first appeared in \cite{conley2013ultraproducts}.

\begin{proposition}\label{pr:ContTree1}
	Let $b$ be finite. The quotient space $b^{T_\Delta}/\sim$ is a compact zero-dimensional Polish space and the graph $\fG_{b,\Delta}$ is a continuous graph of degree at most $\Delta$.
\end{proposition}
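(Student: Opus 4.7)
My plan is to prove the two claims in sequence: first that $b^{T_\Delta}/\sim$ is compact zero-dimensional Polish, and second that $\fG_{b,\Delta}$ has the required properties. Throughout I assume $b$ is finite (needed for compactness); the general case would be similar but without compactness.

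For the first part, I would start by noting that $b^{T_\Delta}$ with the product topology is a compact zero-dimensional Polish space (essentially a Cantor space, since $T_\Delta$ is countable and $b$ finite). The group $\AutR$ is a profinite group: it is the inverse limit of the finite groups $\Aut(B(\mathfrak{r},k))$ consisting of rooted automorphisms of the finite ball $B(\mathfrak{r},k)$, so $\AutR$ is compact and metrizable. The shift action $\AutR \curvearrowright b^{T_\Delta}$ is continuous, hence by compactness of the acting group every orbit is compact (so closed), and the quotient map $\pi$ is both open (standard for continuous group actions) and closed (by compactness of $\AutR$). Therefore $b^{T_\Delta}/\sim$ is compact Hausdorff.

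To obtain zero-dimensionality and metrizability of the quotient, I would exhibit an explicit countable basis of clopens. For a finite labeling $\sigma : B(\mathfrak{r},k) \to b$, consider the basic clopen $U_\sigma = \{x \in b^{T_\Delta} : x|_{B(\mathfrak{r},k)} = \sigma\}$. Its $\AutR$-saturation is $\bigcup \{U_{\sigma'} : \sigma' \text{ has the same rooted isomorphism type as } \sigma\}$, a finite union of basic clopens (since $\AutR$ acts on $B(\mathfrak{r},k)$ through the finite group $\Aut(B(\mathfrak{r},k))$), hence clopen and $\AutR$-invariant. Its image under $\pi$ is clopen in the quotient. As $k$ and $\sigma$ vary, these saturations form a countable basis of clopens for $b^{T_\Delta}/\sim$, giving zero-dimensionality and (being compact Hausdorff second countable) metrizability, hence Polish.

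For the second part, the degree bound is immediate: fixing any $z \in [x]$, a class $[y]$ is a neighbor of $[x]$ only if $[y] = [g \cdot z]$ for some $g \in \AutT$ with $g(\mathfrak{r})$ a neighbor of $\mathfrak{r}$; up to right-composition with $\AutR$ this class depends only on the choice of the neighbor of $\mathfrak{r}$, of which there are exactly $\Delta$. To show continuity of $\fG_{b,\Delta}$, it suffices to verify that $N_{\fG}(U)$ is clopen for each basic clopen $U$ from the basis above. Let $U$ be the set of classes whose rooted isomorphism type on $B(\mathfrak{r},k)$ equals some fixed type $\tau$. A class $[y]$ lies in $N_\fG(U)$ precisely when some re-rooting of $y$ at a neighbor $v$ of its root has type $\tau$ on the radius-$k$ ball around $v$. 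Since $B(v,k) \subseteq B(\mathfrak{r},k+1)$ for every neighbor $v$ of $\mathfrak{r}$, this condition is determined by the rooted isomorphism type of $y$ on $B(\mathfrak{r},k+1)$. Hence $N_\fG(U)$ is a finite union of basic clopens of radius $k+1$, and thus clopen.

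The main obstacle I expect is the bookkeeping in the continuity computation: one must carefully translate between the shift action by $\AutT$ (used to define adjacency in $\fG_{b,\Delta}$) and the equivalence by $\AutR$ (used to form the quotient), and confirm that the apparent asymmetry in the definition of adjacency ($g$ mapping $\mathfrak{r}$ to a neighbor, rather than both ways) still produces a symmetric relation on the quotient. This is routine once one recognizes that $[g \cdot z]$ depends on $z \in [x]$ only through the image neighbor of $\mathfrak{r}$, modulo $\AutR$.
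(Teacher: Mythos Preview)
Your proposal is correct and essentially follows the same approach as the paper. The only stylistic difference is that the paper directly writes down the metric $d/\sim([x],[y])=\inf\{d(x',y'):x'\in[x],y'\in[y]\}$ on the quotient and checks it is well-defined using compactness of $\AutR$, whereas you invoke the general fact that quotients by compact group actions are Hausdorff and then exhibit the clopen basis; both routes identify the same basis (isomorphism types of labeled $k$-balls around the root) and the argument for degree and continuity of $\fG_{b,\Delta}$ is identical.
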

\begin{proof}
	The space $b^{T_\Delta}$ is a compact metric space.
	Write $d$ for the standard compatible metric, that is, $d(x,y)=2^{-k-1}$ if the first vertex where $x$ and $y$ differ is at a distance $k$ from the root $\mathfrak{r}$.
	It is a standard fact that the action $\AutR\curvearrowright b^{T_\Delta}$ is continuous.
	Define the metric $d/\sim$ on $b^{T_\Delta}/\sim$ as 
	$$d/\sim([x],[y])=\inf\{d(x',y'):x'\in [x], \ y'\in [y]\}$$
	where $x,y\in b^{T_\Delta}$.
	Equivalently, $d/\sim([x],[y])=2^{-k-1}$ is the minimal $k\in \mathbb{N}$ so that $x$ and $y$ do not have isomorphic labeled $k$-neighborhood of $\mathfrak{r}$.
	As $\AutR$ is compact, we have that $d/\sim([x],[y])=0$ if and only if $[x]=[y]$.
	It is routine to verify that $d/\sim$ is a compatible metric for the quotient topology. Moreover, sets of the form $\{[x]: x \restriction B_{T_{\Delta}}(\mathfrak{r},k)=l\}$ where $l$ is some labeling $l:B_{T_\Delta}(\mathfrak{r},k) \to b$ and $k \in \N$, form a clopen basis.
	This shows that $b^{T_\Delta}/\sim$ is a compact zero-dimensional Polish space.
	
	The root $\mathfrak{r}$ of $T_\Delta$ has exactly $\Delta$-many neighbours.
	Let $g_1,\dots,g_\Delta\in \AutT$ be such that $g_i$ moves $\mathfrak{r}$ to its $i$-th neighbour.
	Then for every other $h\in \AutT$ that moves $\mathfrak{r}$ to one of the neighbors, there is an $h'\in \AutR$ so that $h'\cdot h=g_i$ for some $i\le \Delta$.
	Consequently, the degree of $\fG_{b,\Delta}$ is at most $\Delta$.
	
	It is routine to verify that if $U$ is a basic clopen set, that is, a set defined by some $b$-labeling of a $k$-neighborhood of $\mathfrak{r}$ for some $k\in \mathbb{N}$, then $N_{\fG_{b,\Delta}}(U)$, the neighborhood of $U$ in $\fG_{b,\Delta}$, is a finite union of basic clopen sets, thus a clopen set. Now, by compactness it follows that the neighborhood of every clopen set is clopen as well. 
\end{proof}

\paragraph{The compact continuous forest $\dist_\Delta$}
First, we restrict our attention to labelings that satisfy local constraints.
This way, we define a continuous forest that provides a bridge between local algorithms and the class $\cont$, see \cite[Section 5.A.]{Bernshteyn2021local=cont}. Note that a similar construction has been used by Elek \cite{elek2018qualitative}. Let $k\in \mathbb{N}$ and write $\ell_k$ for the size of the $k$-neighborhood of the root $\mathfrak{r}$ in $T_\Delta$, that is, $\ell_k=|B(\mathfrak{r},k)|=1+\Delta\sum_{i=0}^{k-1}(\Delta-1)^i$.
By a simple greedy algorithm, we see that there exist $k$-distance colorings of $T_\Delta$ with $\ell_k$ many colors for each $k\in \mathbb{N}$.
Write $T^{[k]}_\Delta$ for the space of all $k$-distance colorings, we have $T^{[k]}_\Delta\subseteq \ell_k^{T_\Delta}$. 

\begin{proposition}\label{pr:ContTree2}
	The space $T^{[k]}_\Delta$ is $\AutT$-invariant and closed in $\ell_k^{T_\Delta}$, thus, the space $T^{[k]}_\Delta/\sim$ is a nonempty compact zero-dimensional Polish space. Let $\fG^{[k]}_\Delta$ be the restriction of $\fG_{\ell_k,\Delta}$ to this space. Then the graph $\fG^{[k]}_\Delta$ is a $\Delta$-regular continuous graph of girth at least $k+1$, that is, every non-trivial cycle in $\fG^{[k]}_\Delta$ consists of at least $k+1$ vertices.
\end{proposition}
\begin{proof}
	
	The first assertion is obvious from the definition. Since $T^{[k]}_\Delta/\sim$ is a continuous image of a compact space, it must be compact itself, and it is zero-dimensional, by being a subset of a zero-dimensional Polish space. 
	
	Using \cref{pr:ContTree1} and the compactness of $T^{[k]}_\Delta/\sim$, it is easy to see that $\fG^{[k]}_\Delta$ is a continuous graph. Suppose that there is a nontrivial cycle $[x_1],\dots,[x_\ell]$ in $\fG^{[k]}_\Delta$ for some $\ell \le k+1$. Namely, $[x_1]=[x_{\ell}]$ and $([x_i],[x_{i+1}])\in \fG^{[k]}_\Delta$ for every $i<\ell$. By switching to smaller cycles, we may assume that $[x_1],\dots,[x_{\ell}]$ does not contain repeated vertices apart from the first and last one, and of course $\ell \ge 4$. Let $g_1,\dots,g_{\ell-1} \in \AutT$ witness the edge relations. Since $T_\Delta$ is a tree, if the distance of $\mathfrak{r}$ and $g_{\ell-1}\circ \cdots \circ g_1(\mathfrak{r})$ was less than $\ell-1$, there would have been an $i <\ell-1$ with $[x_i]=[x_{i+2}]$, contradicting our assumption. 		
	
	Thus, the distance of $\mathfrak{r}$ and $g_{\ell-1}\circ \cdots \circ g_1(\mathfrak{r})$ is exactly $\ell-1\le k$, so these vertices have different labels, showing $[x_1]\not=[x_\ell]$.
	A similar argument shows that $\fG^{[k]}_\Delta$ is $\Delta$-regular.
\end{proof}
Observe that $\AutT$ acts on $\prod_{k\in \mathbb{N}} T^{[k]}_\Delta$ by the product shift action, that is, $(g\cdot x)_k(v)=x_k(g^{-1}( v))$ for every $x=(x_k)_{k\in \mathbb{N}}\in \prod_{k\in \mathbb{N}} T^{[k]}_\Delta$ and $g\in \AutT$.
\begin{definition}
	
	Define $X=\left(\prod_{k\in \mathbb{N}} T^{[k]}_\Delta\right)/\sim$, where $\sim$ is the equivalence relation induced by a restriction of the product shift action to $\AutR$. Similarly, we abuse the notation and write $[{-}]$ for the corresponding equivalence classes. Let the graph  $\dist_\Delta$ be defined as follows: $[x], [y]\in X$ form an edge in $\dist_\Delta$ if and only if there is $g\in \AutT$ that moves $\mathfrak{r}$ to one of its neighbors and $g\cdot x=y$. 
\end{definition}

A similar reasoning as in \cref{pr:ContTree1} yields the following.

\begin{claim}
	The sets of the form $\{[(x_k)_{k \in \N}]\in X: \forall k \leq N \ x_k \restriction B(\mathfrak{r},N)=s_k\}$ form a basis in $X$, where $N \in \N$ and $s_k:B(\mathfrak{r},N) \to \ell_k$ for each $k \leq N$. 
\end{claim}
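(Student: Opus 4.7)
The plan is to mirror the structure of the proof of \cref{pr:ContTree1}, exploiting the compactness of $\AutR$ and the fact that the product shift action restricts coordinatewise. Let $Y := \prod_{k \in \N} T^{[k]}_\Delta$, equipped with the product topology. A standard sub-basis for this topology consists of sets of the form $\{(x_k)_{k \in \N} : x_{k_0} \restriction B(\mathfrak{r}, M) = t\}$, where $k_0, M \in \N$ and $t : B(\mathfrak{r}, M) \to \ell_{k_0}$. Taking finite intersections and then enlarging both the neighborhood radius and the number of specified coordinates to a common index $N$ (padding the unspecified ones by any valid $k$-distance coloring on $B(\mathfrak{r}, N)$, which exists by greedy coloring), I would conclude that the sets
\[ B_{N, (s_k)_{k \leq N}} := \{(x_k)_{k \in \N} \in Y : x_k \restriction B(\mathfrak{r}, N) = s_k \text{ for all } k \leq N\} \]
form a basis for the topology of $Y$.

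Next I would observe that $\AutR$ is compact and acts continuously on $Y$ via the product shift, so the quotient map $q : Y \to X$ is open. In particular, the image $q(B_{N, (s_k)_{k \leq N}})$ is open in $X$ and coincides with the set described in the claim (the defining condition is interpreted modulo the $\AutR$-action: $[x]$ belongs to this image iff some representative of $[x]$ satisfies the restriction equations, which is the intended reading).

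To verify the basis property, given any open $V \subseteq X$ and any $[x] \in V$, the preimage $q^{-1}(V) \subseteq Y$ is open and $\AutR$-invariant and contains any lift $x$ of $[x]$. Since the $B_{N, (s_k)_{k \leq N}}$ are a basis of $Y$, there exist $N$ and $(s_k)_{k \leq N}$ with $x \in B_{N, (s_k)_{k \leq N}} \subseteq q^{-1}(V)$. Applying the open map $q$ and using the $\AutR$-invariance of $q^{-1}(V)$ (which gives $q^{-1}(q(B_{N, (s_k)_{k \leq N}})) \subseteq q^{-1}(V)$, hence $q(B_{N, (s_k)_{k \leq N}}) \subseteq V$), we obtain $[x] \in q(B_{N, (s_k)_{k \leq N}}) \subseteq V$, as desired.

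The only mildly delicate point is bookkeeping: a basic open set in the product topology specifies finitely many coordinates and finitely many neighborhoods, whereas the claim insists that the specification is uniform with radius $N$ covering all coordinates $k \leq N$. The resolution is exactly the padding argument in the first paragraph, which uses that $T^{[k]}_\Delta$ is nonempty for each $k$ (established in \cref{pr:ContTree2}) so that arbitrary partial specifications can be completed.
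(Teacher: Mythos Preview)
Your argument is correct and follows essentially the approach the paper gestures at (``a similar reasoning as in \cref{pr:ContTree1}''): you identify a basis upstairs in $\prod_k T^{[k]}_\Delta$ and push it down through the open quotient map by the $\AutR$-action. The paper's version, by analogy with \cref{pr:ContTree1}, would phrase this via the quotient metric rather than via openness of the quotient map, but the content is the same. Your explicit handling of the padding step and of the interpretation of the defining condition modulo $\AutR$ are both appropriate clarifications.
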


\begin{proposition}
	\label{pr:compactness}
	The space $X$ is a nonempty compact zero-dimensional Polish space and the graph $\dist_\Delta$ is a continuous forest. 
\end{proposition}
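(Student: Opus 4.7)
The plan is to combine the reasoning of \cref{pr:ContTree1} and \cref{pr:ContTree2}, now working on the countable product. First I would check that $\prod_{k \in \mathbb{N}} T^{[k]}_\Delta$ is a nonempty compact zero-dimensional Polish space: nonemptiness follows from the existence of $k$-distance colorings noted just before \cref{pr:ContTree2}; each factor $T^{[k]}_\Delta$ is closed in the compact space $\ell_k^{T_\Delta}$; and a countable product of compact zero-dimensional Polish spaces is again of this type. The product shift action of $\AutR$ is continuous and $\AutR$ is compact, so the quotient $X$ is Hausdorff and inherits a compatible compact zero-dimensional Polish structure, e.g.\ via the metric
\[
d/\sim([x],[y]) = \inf\{d(x',y') : x' \in [x], \ y' \in [y]\},
\]
where $d$ is any compatible product metric; the basis exhibited in the claim preceding the proposition then supplies the required clopen basis.

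Next I would verify that $\dist_\Delta$ is continuous and has degree at most $\Delta$. Exactly as in \cref{pr:ContTree1}, choose $g_1, \dots, g_\Delta \in \AutT$ whose images in $\AutT/\AutR$ exhaust the neighbors of $\mathfrak{r}$. For a basic clopen set $U$ defined by constraints $s_k \colon B(\mathfrak{r}, N) \to \ell_k$ for $k \leq N$, the set $N_{\dist_\Delta}(U)$ is a finite union of basic clopen sets obtained by applying each $g_i$ to these constraints and restricting to $B(\mathfrak{r}, N-1)$ in each coordinate. Hence $\dist_\Delta$ is a continuous graph of degree at most $\Delta$.

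Finally, to show that $\dist_\Delta$ is acyclic and $\Delta$-regular---hence a continuous tree---suppose toward contradiction that there is a nontrivial cycle $[x^{(1)}],\dots,[x^{(\ell)}]$ with $[x^{(1)}] = [x^{(\ell)}]$ and all intermediate classes distinct. Composing the witnessing automorphisms produces some $h \in \AutT$ that maps $\mathfrak{r}$ to a vertex at graph distance exactly $\ell$ from $\mathfrak{r}$ in $T_\Delta$, while $h \cdot x^{(1)}_k \sim x^{(1)}_k$ in $T^{[k]}_\Delta$ for every coordinate $k$. For any $k \geq \ell$, the $k$-distance coloring property of $x^{(1)}_k$ forces $\mathfrak{r}$ and $h^{-1}(\mathfrak{r})$ to receive different labels, so no element of $\AutR$ can carry $x^{(1)}_k$ to $h \cdot x^{(1)}_k$, contradicting the coordinate version of the cycle argument in \cref{pr:ContTree2}. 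The same reasoning applied with $\ell = 2$ shows that the $\Delta$ edges witnessed by $g_1, \dots, g_\Delta$ land in pairwise distinct classes, giving $\Delta$-regularity. The main obstacle is purely bookkeeping through the countable product and the $\AutR$-quotient; once that is in place, acyclicity falls out cleanly from the coordinate with $k$ larger than any candidate cycle length.
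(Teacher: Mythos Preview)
Your argument is correct, but it takes a different route from the paper's. The paper argues via inverse limits: for each finite $K\in\mathbb{N}$ it forms $X_K=\bigl(\prod_{k\le K}T^{[k]}_\Delta\bigr)/\sim$ and the corresponding graph $\dist_K$, observes (exactly as in \cref{pr:ContTree2}) that $X_K$ is a nonempty compact zero-dimensional Polish space and $\dist_K$ is a continuous $\Delta$-regular graph of girth at least $K$, and then checks that $X$ and $\dist_\Delta$ are the inverse limits of the systems $(X_K)_K$ and $(\dist_K)_K$ along the obvious projection maps. All the desired properties (compactness, zero-dimensionality, continuity of the graph, $\Delta$-regularity, acyclicity) are then inherited in one stroke from the finite stages.

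You instead work directly on the full product and its $\AutR$-quotient, redoing the metric/quotient computation from \cref{pr:ContTree1} at the product level and then killing any putative cycle of length $\ell$ by looking at a single coordinate $k\ge\ell$. This is perfectly valid and arguably more elementary, since it avoids invoking any facts about inverse limits; the paper's approach is cleaner bookkeeping-wise because it reduces everything to the already-proved \cref{pr:ContTree2} plus one general principle. One small remark: your phrase ``distance exactly $\ell$'' should be the number of edges in the cycle rather than the number of vertices, and the fact that the composed automorphism really moves $\mathfrak{r}$ that far (no backtracking) uses minimality of the cycle, as in the paper's \cref{pr:ContTree2}. Your sketch implicitly relies on this, so it would be worth making explicit.
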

\begin{proof}
		
		An argument analogous to those in the proofs of \cref{pr:ContTree1} and \cref{pr:ContTree2} shows that $X$ is a nonempty zero-dimensional Polish space and $\dist_\Delta$ is a $\Delta$-regular continuous graph.
		Showing that $\dist_\Delta$ is acyclic is the same as showing that its girth is at least $k+1$ for every $k\in \mathbb{N}$.
		This latter claim follows by repeating the proof of \cref{pr:ContTree2} separately in each coordinate. \footnote{Another way of seeing the above claim this is as follows: Given $K \in \N$ we define the space $X_K=\left(\prod_{k\in K} T^{[k]}_\Delta\right)/\sim$ and graph $\dist_K$ analogously to $X$ and $\dist$. Now if $K_0<K_1\in \mathbb{N}$, there is a canonical continuous and surjective map $q_{K_0,K_1}:X_{K_1}\to X_{K_0}$ that is a homomorphism from $\dist_{K_1}$ to $\dist_{K_0}$.
			It is routine to check that $X$ and $\dist_\Delta$ are inverse limits of the systems $(X_K)_{K\in \mathbb{N}}$ and $(\dist_K)_{K\in\mathbb{N}}$, yielding all the desired properties.}	
\end{proof}

\paragraph{The universal continuous forest $\lab_\Delta$}
In the next example, we consider the space of $\{0,1\}$-labelings of $T_\Delta$ that breaks all symmetries. In the case of countable groups and their Cayley graphs, this model has been studied extensively in the literature, see \cite{ColorinSeward,ST,GJKS,Bernshteyn2021local=cont}.
The main difference from the previous example is that these continuous graphs are not defined on compact spaces.

\newcommand{\freeT}{\free\left(\{0,1\}^{T_\Delta}\right)}

\begin{definition}
	Let $\freeT$ be the set of all elements of $\{0,1\}^{T_\Delta}$ that break all the symmetries of $T_\Delta$.
	That is, $x\in \freeT$ if and only if $g\cdot x\not=x$ whenever $\id_{T_\Delta}\not= g\in \AutT$.
\end{definition}

It is easy to see that the space $\freeT$ is $\AutT$-invariant, however, it is not obvious that it is nonempty.
On the other hand, if it is nonempty, then it is again not difficult to see that it is not compact.

\begin{definition}
	Let $\lab_\Delta$ be the restriction of $\fG_{\{0,1\},\Delta}$ to the set $\free(\{0,1\}^{T_\Delta})/\sim$.
\end{definition}

\begin{proposition}
	The space $\freeT/\sim$ is a nonempty zero-dimensional Polish space and the graph $\lab_\Delta$ is a continuous forest.
\end{proposition}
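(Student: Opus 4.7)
The plan is to leverage the framework already developed for the ambient quotient $\{0,1\}^{T_\Delta}/\sim$ in \cref{pr:ContTree1} and combine it with a Baire category argument establishing that $\freeT$ is a dense $G_\delta$ subset of $\{0,1\}^{T_\Delta}$; freeness will then yield the tree structure of $\lab_\Delta$ almost automatically. First, I would observe that $\freeT$ is $\AutT$-invariant (stabilizers transform by conjugation), so the quotient $\freeT/\sim$ by $\AutR$ is well-defined, sits as a subspace of the compact zero-dimensional Polish space $\{0,1\}^{T_\Delta}/\sim$ and inherits zero-dimensionality; Polishness will follow once we know $\freeT$ is $G_\delta$. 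The graph $\lab_\Delta$ is then continuous as the restriction of the continuous graph $\fG_{\{0,1\},\Delta}$, since the $\AutT$-invariance of $\freeT/\sim$ ensures that a clopen neighborhood in $\fG_{\{0,1\},\Delta}$ restricts to a clopen neighborhood in $\lab_\Delta$.

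The main step is to show $\freeT \neq \emptyset$. The key combinatorial observation is that every non-identity automorphism of $T_\Delta$ has infinite support: if $g \in \AutT$ fixes $B(v,1)$ pointwise for some vertex $v$, an easy induction (a fixed vertex together with all its neighbors forces the next sphere to be fixed) shows $g$ fixes $B(v,k)$ for all $k$, hence $g = \id_{T_\Delta}$. Exploiting the decomposition $\AutT = \bigsqcup_{v \in T_\Delta} g_v \AutR$ given by transitivity of $\AutT$ on vertices combined with compactness of $\AutR$, I would write $\{0,1\}^{T_\Delta} \setminus \freeT = \bigcup_{v \in T_\Delta} B_v$, where $B_v = \{x : \exists h \in \AutR,\ g_v h \neq \id_{T_\Delta}\text{ and } (g_v h)\cdot x = x\}$. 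Each $B_v$ is $F_\sigma$: it is a countable union of projections of closed subsets of $\{0,1\}^{T_\Delta} \times \AutR$ along the compact fiber $\AutR$, with the case $v = \mathfrak{r}$ handled by splitting into the countably many cosets specifying the action of $h$ on a chosen non-root vertex. The crucial point is that each such piece has empty interior: inside a basic clopen set $[s]$ determined by $s \colon F \to \{0,1\}$ with $F \subseteq B(\mathfrak{r}, r)$, the action of $g_v h$ on $B(\mathfrak{r}, r)$ factors through a finite group, since the pointwise stabilizer of $B(\mathfrak{r}, r)$ in $\AutR$ has finite index; hence only finitely many local behaviors of $g_v h$ on $B(\mathfrak{r}, r)$ are possible, and for each such behavior the infinite-support property lets us alter $x$ at a single vertex outside $F$ to violate the corresponding invariance equation. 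Consequently $\bigcup_v B_v$ is $F_\sigma$ meager, so $\freeT$ is a dense $G_\delta$, in particular non-empty.

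Finally, to confirm that $\lab_\Delta$ is a $\Delta$-regular tree, I fix any $x \in \freeT$ and define $\phi_x \colon T_\Delta \to \freeT/\sim$ by $\phi_x(v) = [g_v \cdot x]$, where $g_v \in \AutT$ is any element with $g_v^{-1}(\mathfrak{r}) = v$. Freeness yields the identity $[g \cdot x] = [g'\cdot x]$ if and only if $g^{-1}(\mathfrak{r}) = g'^{-1}(\mathfrak{r})$, so $\phi_x$ is well-defined independently of the choice of $g_v$ and is injective. If $v, v' \in T_\Delta$ are adjacent, the automorphism $g_{v'}g_v^{-1}$ moves $\mathfrak{r}$ to a neighbor of itself and sends $g_v \cdot x$ to $g_{v'} \cdot x$, witnessing that $\phi_x(v)$ and $\phi_x(v')$ are adjacent in $\lab_\Delta$; conversely, every neighbor of $\phi_x(v)$ in $\lab_\Delta$ arises from such an adjacency. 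Therefore $\phi_x$ is a graph isomorphism from $T_\Delta$ onto the connected component of $[x]$, so every component of $\lab_\Delta$ is a copy of $T_\Delta$, giving both acyclicity and $\Delta$-regularity simultaneously.

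The principal obstacle will be proving that each $B_v$ has empty interior, because the existential quantifier ranges over the uncountable compact group $\AutR$; the plan resolves this by reducing to a finite combinatorial problem via the finite factorization of $\AutR$'s action on each finite ball, and then breaking each of the finitely many local scenarios using the infinite-support property of nontrivial tree automorphisms.
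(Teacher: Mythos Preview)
Your overall strategy is sound, and both the $G_\delta$ argument (via compactness of $\AutR$ and closed projections) and the tree-structure argument through the explicit isomorphism $\phi_x$ are correct; the latter is in fact more detailed than what the paper provides, which simply asserts that nonemptiness and $G_\delta$-ness of $\freeT$ suffice. The paper's route to those two facts is different and shorter: it reformulates $x\in\freeT$ as the condition that for every pair $v\neq w$ there is some $k$ with $x\upharpoonright B(v,k)\not\cong x\upharpoonright B(w,k)$ as labeled rooted balls, which is manifestly $G_\delta$, and then gets nonemptiness by noting that a random element of $\{0,1\}^{T_\Delta}$ with respect to the product coin-flip measure is almost surely free.

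The genuine gap in your plan is the empty-interior step. Reducing to finitely many ``local behaviors'' of $g_v h$ on $B(\mathfrak{r},r)$ does not control the invariance constraint: the system $(g_v h)\cdot x=x$ has one equation $x(u)=x((g_v h)^{-1}(u))$ for \emph{every} $u\in T_\Delta$, and within a single coset $h_i S$ of the pointwise stabilizer $S$ of $B(\mathfrak{r},r)$ the automorphisms $g_v h_i s$ still differ arbitrarily outside $B(\mathfrak{r},r)$. Hence an alteration of $x$ at one vertex that kills $g_v h_i$ need not kill any other $g_v h_i s$, and your finitely many alterations do not rule out the whole compact fiber. The quickest repair is to borrow the paper's reformulation: one has $B_v\subseteq A_{\mathfrak{r},v}:=\{x:\forall k,\ x\upharpoonright B(\mathfrak{r},k)\cong x\upharpoonright B(v,k)\}$, and each $A_{\mathfrak{r},v}$ is closed and nowhere dense (given $s$ on $B(\mathfrak{r},r)$, take $k$ large and set the free vertices in $B(\mathfrak{r},k)\setminus B(v,k)$ to $1$ and those in $B(v,k)\setminus B(\mathfrak{r},k)$ to $0$, forcing different $1$-counts in the two balls). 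This handles all $v$ uniformly, including $v=\mathfrak{r}$, and gives exactly the Baire-category analogue of the paper's measure argument.
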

\begin{proof}
	By \cite[Theorem 3.11]{kechrisclassical}, it is clearly enough to show that $\freeT/\sim$ is a nonempty $G_\delta$-subset of $\{0,1\}^{T_\Delta}/\sim$.
		It is not hard to verify that $[x]\in \freeT/\sim$ if and only if for every $v\not= w\in T_\Delta$ there is $k\in \mathbb{N}$ so that $x\upharpoonright B(v,k)$ and $x\upharpoonright B(w,k)$ are not isomorphic.
	The latter condition is clearly $G_\delta$.
	To show that $\freeT$ is nonempty, we can simply consider a random element of $\{0,1\}^{T_\Delta}$ with respect to the product coin-flip measure.
	The fact that $\lab_\Delta$ is acyclic follows directly from the definition.

    Finally, it is not hard to see that $\lab_\Delta$ is a continuous graph, using the fact that a basis for the topology is given by the clopen sets of the form $\{x:x \restriction B(\mathfrak{r},N)=s\}/\sim$, for $N \in \N$ and $s: B(\mathfrak{r},N) \to \{0,1\}$.
\end{proof}

\subsubsection{$\borel$}\label{subsubsec:borel}
A Borel graph $\mathcal{G}$ is a graph such that $V(\mathcal{G})$ is a standard Borel space and $E(\mathcal{G}) \subset \binom{X}{2}$ is a Borel set. 

\begin{definition}[Borel forest]
	A \emph{Borel ($\Delta$-regular) forest} $\fT$ is an acyclic $\Delta$-regular Borel graph.
	We define $H(\fT)$ to be the set of all half-edges of $\fT$.
	
\end{definition}

Note that the space $H(\fT)$ is naturally endowed with a standard Borel structure. 

\begin{definition}[$\borel$]
	Let $\Pi=(\Sigma,\fV,\fE)$ be an LCL.
	We say that $\Pi$ is in the class $\borel$ if for every Borel forest $\fT$ there is a \emph{Borel function} $f:H(\fT)\to \Sigma$ that is a $\Pi$-coloring of $\fT$.
\end{definition}

We have the following observation.

\begin{theorem}
	Let $\Pi$ be an LCL.
	If $\Pi\in \cont$, then $\Pi\in \borel$.
\end{theorem}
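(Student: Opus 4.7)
The plan is to reduce the Borel setting to the continuous setting by refining the underlying standard Borel structure to a well-chosen Polish topology. Fix a Borel tree $\fT$ on a standard Borel space $X$, presented by a finite collection $\Phi=(\varphi_i)_{i\in I}$ of partial Borel isomorphisms of $X$. Our goal is to produce a Borel $\Pi$-coloring of $\fT$ directly from a continuous one.

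The main step invokes the classical fact (see \cite[Section~13]{kechrisclassical}) that given finitely many Borel subsets of $X$ and finitely many partial Borel isomorphisms of $X$, one can find a compatible zero-dimensional Polish topology $\tau$ on $X$ (i.e. one generating the same $\sigma$-algebra) in which the chosen Borel sets become clopen and each $\varphi_i$ becomes a partial homeomorphism with clopen domain and range. Applying this to the sets $\dom(\varphi_i),\operatorname{rng}(\varphi_i)$ for $i\in I$, we equip $X$ with such a topology $\tau$. Now I claim that $\fT$, viewed as a graph on the Polish space $(X,\tau)$, is a continuous $\Delta$-regular tree in the sense of \cref{subsub:continuous}. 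Indeed, acyclicity and $\Delta$-regularity are structural properties that do not depend on the topology. For the continuity condition, given any clopen $U\subseteq X$ the neighborhood $N_\fT(U)$ is the union over $i\in I$ of $\varphi_i(U\cap\dom(\varphi_i))$ and $\varphi_i^{-1}(U\cap\operatorname{rng}(\varphi_i))$, each of which is clopen because the $\varphi_i$ are partial homeomorphisms with clopen domains; a finite union of clopen sets is clopen.

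Next, the half-edge space $\fH(\fT)$, formally a disjoint union of copies of the clopen sets $\dom(\varphi_i)$ indexed by $i\in I$, inherits from $\tau$ a natural zero-dimensional Polish topology whose Borel structure coincides with the one described earlier. Since $\Pi\in\cont$, there is a continuous function $f:\fH(\fT)\to\Sigma$ that is a $\Pi$-coloring of $\fT$. Because $\Sigma$ is finite (hence discrete), $f$ is continuous with respect to $\tau$, and continuous maps between Polish spaces are Borel; equivalently, the preimage of each point of $\Sigma$ under $f$ is clopen, thus Borel, in the original Borel structure on $\fH(\fT)$. Therefore $f$ is a Borel $\Pi$-coloring of the original Borel tree $\fT$, witnessing $\Pi\in\borel$.

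The only slightly delicate point is verifying that the topological refinement truly turns the Borel tree into a continuous tree in our precise sense; this is exactly where the requirement that the $\varphi_i$ become partial homeomorphisms with clopen domains does the work. Once that is in place, the implication $\cont\Rightarrow\borel$ is essentially immediate, since the continuous solution produced by hypothesis is automatically Borel with respect to the original structure. No additional combinatorial argument about trees or LCLs is needed.
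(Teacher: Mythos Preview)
Your argument is correct and is exactly the approach the paper takes: it invokes the standard change-of-topology fact from \cite[Section~13]{kechrisclassical} to turn the defining partial Borel isomorphisms into partial homeomorphisms on a compatible zero-dimensional Polish space, so that the Borel tree becomes a continuous tree and any continuous $\Pi$-coloring is automatically Borel. The paper states this in one sentence; you have simply spelled out the verification that $\fT$ is a continuous graph in the refined topology, which is fine.
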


\begin{proof}
	Let $\mathcal{T}$ be a $\Delta$-regular Borel forest. By \cite{KST}, $\mathcal{T}$ admits a Borel edge-coloring with $2\Delta-1$-many colors. This yields that there is a finite collection $(\varphi_i)_{i \leq 2\Delta-1}$ of Borel involutions with the property that $E(\mathcal{G})=\bigcup_i graph(\varphi_i)$. 
	Now, it is a standard fact, see \cite[Section~13]{kechrisclassical}, that given a countable collection of partial Borel maps on $X$, one can find a compatible zero-dimensional Polish topology that turns them into partial homeomorphisms with clopen domains. Then, in this topology, $\mathcal{T}$ becomes a continuous graph, and a continuous solution to $\Pi$ is clearly Borel in the original topology.  
\end{proof}

\subsubsection{$\baire$}\label{subsubsec:baire}

Now, we are ready to define the class $\baire$ as the relaxation of the class $\borel$.

\begin{definition}[$\baire$]
	Let $\Pi=(\Sigma,\fV,\fE)$ be an LCL.
	We say that $\Pi$ is in the class $\baire$ if for every Borel forest $\fT$ and every compatible Polish topology $\tau$ on $V(\mathcal{T})$, there is a $\tau$-comeager set $C$, and Borel function $f:H(\fT) \cap (C \times E(\mathcal{T}))\to \Sigma$ that is a $\Pi$-coloring.
\end{definition}

A direct consequence of the definitions is the following.

\begin{theorem}
	Let $\Pi$ be an LCL.
	If $\Pi\in \borel$, then $\Pi\in \baire$.
\end{theorem}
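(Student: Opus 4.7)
The plan is to observe that this implication is essentially immediate from unpacking the two definitions, so the proof will amount to little more than a one-line verification. First I would fix an LCL $\Pi \in \borel$ and an arbitrary Borel tree $\fT$ on a standard Borel space $X$, together with an arbitrary compatible Polish topology $\tau$ on $X$. The goal is to produce a Borel function $f : \fH(\fT) \to \Sigma$ which is a $\Pi$-coloring of $\fT$ on a $\tau$-comeager subset of $\fH(\fT)$.

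Since the choice of topology $\tau$ is irrelevant to the hypothesis and the standard Borel structure on $\fH(\fT)$ does not depend on $\tau$, I would simply invoke the hypothesis $\Pi \in \borel$ applied to this very Borel tree $\fT$: it hands us a Borel function $f : \fH(\fT) \to \Sigma$ which is a $\Pi$-coloring of $\fT$ \emph{everywhere}, that is, the multiset of labels around every vertex lies in $\fV$ and the multiset on every edge lies in $\fE$. In particular, the set of vertices (and edges) where the $\Pi$-coloring constraint is satisfied is all of $X$ (resp.\ all of the edge set), which is trivially $\tau$-comeager for any compatible Polish topology $\tau$. Hence the same function $f$ witnesses the $\baire$ condition for the pair $(\fT, \tau)$.

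Since $\fT$ and $\tau$ were arbitrary, this shows $\Pi \in \baire$, completing the proof. There is essentially no obstacle here: the only thing one needs to note is that the Borel measurability requirement in the definition of $\baire$ is the same notion as in the definition of $\borel$ (both refer to the standard Borel structure of $\fH(\fT)$, which is intrinsic and does not depend on the choice of $\tau$), and that ``$\Pi$-coloring on a $\tau$-comeager set'' is strictly weaker than ``$\Pi$-coloring everywhere.''
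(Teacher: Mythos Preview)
Your proof is correct and matches the paper's approach: the paper simply states that the theorem is ``a direct consequence of the definitions'' and gives no further proof, which is exactly what you have spelled out.
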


\paragraph{Toast decomposition}
The main technical advantage, that the class $\baire$ offers, is the possibility of building a hierarchical decomposition.
Finding a hierarchical decomposition in the context of descriptive combinatorics is tightly connected with the notion of \emph{Borel hyperfiniteness}.
Understanding which Borel graphs are Borel hyperfinite is a major theme in descriptive set theory \cite{doughertyjacksonkechris,gaojackson,conley2020borel}.
It is known that grids, and generally polynomial growth graphs are hyperfinite, while, e.g., acyclic graphs are not in general hyperfinite \cite{jackson2002countable, bernshteyn2025large}.
A strengthening of hyperfiniteness that is of interest to us is called \emph{toast} \cite{gao2015forcing,conleymillerbound}. Note that a similar concept has also appeared under the name ``path decomposition" in \cite{conley2020measurable}. 

\begin{definition}[CToast]
	Let $G$ be a graph and $q\in \mathbb{N}$.
	A \emph{$q$-ctoast} of $G$ is a collection $\fD$ of finite connected subsets of $G$ with the property that 
	\begin{enumerate}
		\item every pair of adjacent  vertices is covered by some element of $\fD$ and 
		\item the vertex boundaries of every $D\not=E\in \fD$ are at least $q$ apart in the graph distance.
	\end{enumerate}

	Let $\fG$ be a Borel graph.
	A $q$-ctoast $\fD$ of $\fG$ is \emph{Borel}, if it is a Borel subset of the standard Borel space of all finite subsets of $V(\mathcal{G})$.
\end{definition}

Note that while the general definition of toast does not require the pieces to be connected, we follow \cite[Definition~4.1]{grebik_rozhon2021toasts_and_tails} who considered the case connected toasts.
To emphasize the difference from the general definition, we call a connected toast \emph{ctoast}.

In the case of forests there is no way of constructing a Borel toast in general, however, it is a result of Hjorth and Kechris \cite{hjorth1996borel} that every Borel graph is hyperfinite on a comeager set for every compatible Polish topology.
A direct consequence of \cite[Lemma~3.1]{marksungerbaire} together with a standard construction of a toast gives the following.

\begin{proposition}
	\label{pr:BaireToast}
	Let $\fG$ be a Borel graph that has degree bounded by $\Delta\in \mathbb{N}$ and $\tau$ be a compatible Polish topology on $V(\mathcal{G})$.
	Then for every $q>0$ there is a Borel $\fG$-invariant (that is, containing full connected components) $\tau$-comeager set $C$ on which $\fG$ admits a Borel $q$-ctoast.
\end{proposition}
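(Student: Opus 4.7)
The plan is to combine \cite[Lemma~3.1]{marksungerbaire} --- which supplies, on a Borel $\fG$-invariant $\tau$-comeager set, a Borel maximal independent set in any power $\fG^L$ of $\fG$ --- with the standard Voronoi-cell construction of a toast from a fast-growing sequence of maximal independent sets at scales $r_n$.

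First, I would fix a rapidly increasing sequence of integers $r_0<r_1<r_2<\dots$ with $r_0 \geq 10q$ and $r_{n+1}\geq 10 r_n$, so that level-$n$ cells are negligibly small compared to the spacing between MIS points at level $n+1$. For each $n\in\mathbb{N}$, the power graph $\fG^{r_n}$ (connecting $\fG$-vertices at graph distance at most $r_n$) is again a Borel graph of bounded degree $\leq\Delta^{r_n}$, so by \cite[Lemma~3.1]{marksungerbaire} there is a Borel $\fG$-invariant $\tau$-comeager set $C_n\subseteq X$ on which $\fG^{r_n}$ admits a Borel maximal independent set $A_n$. Intersecting yields a single Borel $\fG$-invariant $\tau$-comeager set $C=\bigcap_n C_n$ on which each $A_n$ is simultaneously a maximal $r_n$-separated subset of $C$ with respect to the graph metric of $\fG$.

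Next, I would form the toast elements using Voronoi cells. Fix an arbitrary Borel linear order on $X$ for tie-breaking. For each $n$ and each $a\in A_n\cap C$, let $V_a^n$ consist of those $x$ whose $<$-minimal $\fG$-closest element of $A_n\cap C$ is $a$. By maximality of $A_n$, every such cell has diameter at most $2r_n$, hence is a finite Borel subset of $X$. To ensure the boundary-separation condition, I would replace $V_a^n$ by a slightly shrunk version $D_a^n$: the set of $x\in V_a^n$ whose $\fG$-distance to the complement $C\setminus V_a^n$ is at least some level-dependent buffer. The collection $\fD=\{D_a^n:n\in\mathbb{N},\ a\in A_n\cap C\}$ is Borel and consists of finite sets.

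Finally, I verify the two toast axioms. The covering property follows because for $x,y$ in the same $\fG$-component of $C$ and $n$ large enough that $r_n$ dwarfs $d_\fG(x,y)$ plus the cumulative shrinkage, both $x$ and $y$ land strictly inside the same shrunk cell $D_a^n$. Boundary separation is the main obstacle: within the same level $n$, shrinking by at least $q$ creates a buffer zone of width $\geq q$ between $D_a^n$ and $D_b^n$ for $a\neq b$; between levels $n<m$, the inequality $r_m\geq 10 r_n$ makes the (much smaller) $D_a^n$ lie either well inside or well outside $D_b^m$, and by calibrating the shrinkage at each level (e.g., $q+2r_{n-1}$) both boundaries are forced to be at least $q$ apart in graph distance. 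The delicate technical point is choosing the shrinkage schedule so that it simultaneously delivers $q$-separation across all pairs of levels and does not destroy covering --- this succeeds precisely because $r_n$ was chosen to grow geometrically in $q$.
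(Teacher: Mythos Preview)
Your overall plan --- invoke \cite[Lemma~3.1]{marksungerbaire} to get, on a comeager invariant set, well-separated ``centers'' $A_n$ at growing scales $r_n$, and then build toast pieces from Voronoi cells --- is exactly the right framework, and matches the paper's proof. The gap is in the specific mechanism you use to enforce the boundary-separation axiom between different levels.

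Shrinking each Voronoi cell $V_a^n$ inward by a buffer $s_n$ does \emph{not} force a level-$n$ piece to lie ``either well inside or well outside'' a level-$m$ piece for $n<m$. The sets $A_n$ and $A_m$ are constructed independently, so nothing prevents some center $a\in A_n$ from sitting at distance exactly $r_n/2-s_n$ (or arbitrarily close to it) from $\partial D_b^m$; then the boundary of $D_a^n$ meets $\partial D_b^m$, giving distance $0$ between the two boundaries. No choice of shrinkage schedule $s_n$ repairs this, because the failure is not about the \emph{size} of $D_a^n$ relative to $D_b^m$ but about the \emph{position} of $\partial D_b^m$ relative to the level-$n$ centers, which you have no control over.

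The paper's construction (following \cite[Appendix~A]{grebik_rozhon2021toasts_and_tails}) fixes this by building the cells \emph{outward} via absorption rather than inward via shrinking. One starts the level-$(n+1)$ cell as a ball $R_{n+1}(x)=B_\fG(x,f(n+1)/4)$ and then, descending through $i=n,n-1,\dots,1$, adjoins every ball $B_i(y)=B_\fG(y,f(i)/3)$ that touches the set built so far. Because the final boundary of $C_{n+1}(x)$ never cuts through any $B_i(y)$, every lower-level cell $C_i(y)\subseteq B_i(y)$ is automatically either entirely inside $C_{n+1}(x)$ or at distance $\geq f(i)/3-f(i)/4$ from it, and the fast growth $f(n)=(2q)^{n^2}$ ensures the absorbed balls do not swell the cell enough to collide with another level-$(n+1)$ cell. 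Replacing your shrinking step by this absorption step makes the argument go through.
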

For the sake of completeness, we include the proof of this statement. 

\begin{proof}	
	Fix $q\in \mathbb{N}$ and a sufficiently fast growing function $f(n)$, e.g., $f(n)=(4q)^{n^2}$.
	Then a standard Baire category argument (see \cite[Lemma~3.1]{marksungerbaire}) gives a sequence $\{A_n\}_{n\in \mathbb{N}}$ of Borel subsets of $X$ such that $C=\bigcup_{n\in \mathbb{N}}A_n$ is a Borel $\tau$-comeager set that is $\fG$-invariant, and $\distance_\fG(x,y)\ge 2f(n)$ for every distinct $x,y\in A_n$.
	
		Let $R_n(x)=B_\fG(x,f(n)/4)$, where $n\in \N$ and $x\in A_n$.

	\begin{lemma} There exists a Borel collection of subsets $(\mathcal{D}_n)_{n \in \N}=(\{C_n(x)\}_{x \in A_n})_{n \in \N}$ with the following properties:
		\begin{enumerate}
			\item \label{c:lfirst} for every $n$ and $x \in A_n$ we have $B_\fG(x,f(n)/3) \supseteq C_n(x) \supseteq R_n(x)$,
			\item \label{c:lsec} $C_n(x)$ is connected,
			
			\item \label{c:lthird} if $A \in \fD_i$ and $B\in \fD_j$ with $i \leq j$ then the distance between the boundaries of $A$ and $B$ is $>q$.
		\end{enumerate}
		
	\end{lemma}

	\begin{proof}
        For $A\subseteq V(\fG)$, we write
        $$\mathcal{B}_\mathcal{G}(A,m)=\{v\in V(\mathcal{G}):\exists a\in A \ \operatorname{dist}_\fG(a,v)\le m\}$$
        for the set of vertices of $\mathcal{G}$ of graph distance at most $m$ from $A$.
    
		Set $C_1(x)=R_1(x)$.
		Suppose that $\fD_n$ has been defined and set
		\begin{itemize}
			\item  $H^{n+1}(x,n+1):=R_{n+1}(x)$ for every $x\in A_{n+1}$,
			\item  if $1\le i\le n$ and $\left\{H^{n+1}(x,i+1)\right\}_{x\in A_{n+1}}$ has been defined, then we put
			$$H^{n+1}(x,i)=H^{n+1}(x,i+1)\cup \bigcup \left\{B_\mathcal{G}\left(C_i(y),\frac{f(i)}{4}\right):y \in A_i \land B_\mathcal{G}\left(C_i(y),\frac{f(i)}{4}\right) \cap H^{n+1}(x,i+1) \neq \emptyset\right\}$$
			for every $x\in A_{n+1}$,
			\item  set $C_{n+1}(x):=H^{n+1}(x,1)$ for every $x\in A_{n+1}$, this defines $\mathcal{D}_{n+1}$.
		\end{itemize}
	
		We show by induction on $n$ that $\mathcal{D}_n$ satisfies the desired properties, more precisely, it satisfies the first two conditions, and that if $A \neq B \in \bigcup_{i \leq n} \mathcal{D}_i$ then the third condition of the lemma holds as well. This is clearly enough. 
		
		The case $n=1$ is clear, so assume that we have this for $n$. To see \eqref{c:lfirst} holds, observe that by induction, for all $x \in A_{n+1}$ and $i \leq n+1$ we have \begin{equation}  \tag{*} H^{n+1}(x,i) \subseteq B_{\mathcal{G}}\left(H^{n+1}(x,i+1), \frac{2f(i)}{3}+\frac{2f(i)}{4}+1\right),\label{e:contains}\end{equation}
		thus 
		\[C_{n+1}(x) \subseteq B_{\mathcal{G}}\left(x,\frac{f(n+1)}{4}+\sum_{i \leq n} \left(\frac{2f(i)}{3}+\frac{2f(i)}{4}+1\right)\right) \subseteq  B_{\mathcal{G}}\left(x,\frac{f(n+1)}{3}\right),\]
		by the assumption on the growth rate of $f$. 
		
		\eqref{c:lsec} is clear from the definition, so let us check \eqref{c:lthird}.  If $A \neq B \in \bigcup_{i \leq n} \mathcal{D}_i$ then the statement follows from the inductive hypothesis, while if they are from $\mathcal{D}_{n+1}$ then it follows from \eqref{c:lfirst} and $f(n+1)/3>q$. So assume that $C_{n+1}(x)=A \in \mathcal{D}_{n+1}$ and $B \in \mathcal{D}_i$ for some $i<n+1$ and the condition fails. If $B_{\mathcal{G}}(B,q) \cap H^{n+1}(x,j) \neq \emptyset$ for some $j>i$, then $B_{\mathcal{G}}(B,\frac{f(i)}{4})$ would have been added to $H^{n+1}(x,i)$, obtaining that $B_{\mathcal{G}}(B,q) \subseteq C_{n+1}(x)$. Consequently, it must be the case that $B_\mathcal{G}(B,\frac{f(i)}{4}) \cap H^{n+1}(x,i+1)=\emptyset$. 
		
		Moreover, by \eqref{c:lfirst} and the fact that elements of $A_i$ are further than $2
        f(i)$ separated, it follows that $B_\mathcal{G}(B,\frac{f(i)}{4}) \cap H^{n+1}(x,i)=\emptyset$ holds as well. On the other hand, by the containment in \eqref{e:contains} and \eqref{c:lfirst}, it follows that 
		\[C_{n+1}(x) \subseteq B_{\mathcal{G}}\left(H^{n+1}(x,i),\sum_{j<i}\left(\frac{2f(j)}{3}+\frac{2f(j)}{4}+1\right)\right),\] consequently, $B_\mathcal{G}(B,q) \cap C_{n+1}(x) \neq \emptyset$ would imply $B_\mathcal{G}(B,\frac{f(i)}{4}) \cap H^{n+1}(x,i)\neq \emptyset$, using again the assumption on the growth rate of $f$. This contradiction shows the desired statement.
\end{proof}

	It remains to check that $\fD=\bigcup_{n\in\mathbb{N}} \fD_n$ is a $q$-ctoast of $\fG$ restricted to $C$. But this is clear: since every element of $C$ is covered by $A_n$ for some $n\in \mathbb{N}$, it follows from the definition of $\fD$ and $q>0$ that every adjacent pair of vertices is contained in some element of $\mathcal{D}$, and the rest of the properties are satisfied by the lemma.
\end{proof}

\paragraph{$\toast$ algorithm}
The idea to use a toast structure to solve LCL problems appears in \cite{conleymillerbound} and has many applications since then \cite{gao2015forcing,marksunger}.
This approach has been formalized in \cite{grebik_rozhon2021toasts_and_tails}, where the authors introduce $\toast$ algorithms (see also \cite{qian2022descriptive} for a related notion of ASI algorithms).
As we use this notion in \cref{sec:baire}, we discuss here briefly the definition and refer the reader to \cite[Section~4]{grebik_rozhon2021toasts_and_tails} for more details.

An LCL $\Pi$ admits a $\toast$ algorithm if there is $q\in \mathbb{N}$ and a partial extending function that has the property that whenever it is applied inductively to a $q$-ctoast, then it produces a $\Pi$-coloring.
A \emph{partial extending function} gets a finite subset of some underlying set that is partially colored as an input and outputs an extension of this coloring on the whole finite subset.
An advantage of this approach is that once we know that a given Borel graph admits a Borel or Baire ctoast structure, and a given LCL $\Pi$ admits a $\toast$ algorithm, then we may conclude that $\Pi$ is in the class $\borel$ or $\baire$, respectively.

\subsection{The remaining implications on \cref{fig:new_picture}}
\label{subsec:implications}

Let us discuss now the arrows on \cref{fig:new_picture} that are not the main two results of this paper. Note that assuming the two main results and using \cref{thm:basicLOCAL}, we only have to show two things. First, that there is a class from measurable combinatorics corresponding to $\local(O(1))$, second, that $\CONT \subsetneq \borel \subsetneq \baire$.

{\bf (a)}
In order to show that problems in $\local(O(1))$ can be defined purely in measurable combinatorics terminology, we define the continuous forest $\fT^\mathbb{N}_\Delta$.
The vertex set are injective maps from $T_\Delta\to \mathbb{N}$, i.e., a closed and $\AutT$-invariant subset of $\mathbb{N}^{T_\Delta}$, and $\fT^\mathbb{N}_\Delta$ is the restriction of $\fG_{\mathbb{N},\Delta}$ to this set.
We claim that the class $\local(O(1))$ is the same as the class of problems solvable by uniformly continuous (for the canonical metric) function. Indeed, by Naor-Stockmeyer \cite[Theorem~3.3]{naorstockmeyer}, if $\Pi \in \local(O(1))$, then there is an \emph{order-invariant} constant round solution, that is, one that only depends on the order of the IDs. Applying this algorithm to the labeling obtained from $\mathcal{T}^\N_\Delta$, we solve $\Pi$ in a uniformly continuous manner. For the converse, note that a uniformly continuous map with finite range depends only on the labels in a fixed sized neighborhood.

It is interesting to note that from the perspective of $\borel$ is the graph $\fT^{\mathbb{N}}_{\Delta}$ trivial, i.e, there is a Borel set that intersects every connected component of $\fT^{\mathbb{N}}_{\Delta}$ in exactly one point, namely, the collection of elements so that the label of the root is the minimal natural number appearing on the tree. In the Borel context this suffices to solve any coloring problem (see, e.g., \cite[Theorem 5.23]{pikhurko2021descriptive_comb_survey}).
On the other hand we do not know if $\cont$ is equal to $\local(O(\log^* n))$ on this graph.

{\bf (b)}
It is proved in \cite{brandt2021classes} that the $\Delta$-coloring problem with paths distinguishes the classes $\cont$ and $\borel$.
Recall that to solve the problem we have to produce a $\Delta$-coloring but we are allowed not to color some vertices.
The constraint is that the induced subgraph on the uncolored vertices consists of doubly infinite paths.
The intuitive reason why this problem is not in $\cont$ is that it is not possible to produce doubly infinite lines in a continuous way.
This is proved formally in \cite{brandt2021classes}.
On the other hand, after inductively discarding $\Delta-2$ Borel maximal independent sets, the induced subgraph on the remaining vertices consists of finite, one-ended, or doubly infinite paths.
This is clearly enough to construct the desired solution in a Borel way.

Finally, \cite{conley2016brooks} and \cite{DetMarks} show that $\Delta$-coloring distinguishes between $\borel$ and $\baire$.

	\section{Continuous colorings}
	\label{sec:Continuous}

	Now we turn to the investigation of the existence of continuous solutions of LCL problems on regular forests. Our main contribution is that, unlike in the main theorem of \cite{Bernshteyn2021LLL}, we do not need to assume that our graph is given by a group action, that is, the edges do not come with a proper edge $\Delta$-coloring. As seen below, this requires a significant change of Bernshteyn's argument.

	Recall the definition of $\dist_\Delta$ and $\lab_\Delta$ from \cref{subsub:continuous}.
	
	\begin{theorem}
		\label{t:maincontinuous} Let $\Pi$ be an LCL problem on $\Delta$-regular forests. The following are equivalent:
		\begin{enumerate}
			\item \label{c:localcont} $\Pi \in \local(O(\log^* n))$
			\item \label{c:incont} $\Pi \in \cont$
			\item \label{c:ondist} $\Pi$ admits a continuous solution on $\dist_\Delta$ 
			\item \label{c:on01} $\Pi$ admits a continuous solution on $\fT^{\{0,1\}}_\Delta$ 
		\end{enumerate} 
		
	\end{theorem}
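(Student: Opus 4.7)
My plan is to close the cycle (\ref{c:localcont}) $\Rightarrow$ (\ref{c:incont}) $\Rightarrow$ (\ref{c:ondist}) $\Rightarrow$ (\ref{c:localcont}) together with the side implications (\ref{c:incont}) $\Rightarrow$ (\ref{c:on01}) $\Rightarrow$ (\ref{c:ondist}). The implication (\ref{c:localcont}) $\Rightarrow$ (\ref{c:incont}) is essentially Theorem~2.13 of \cite{Bernshteyn2021LLL}: the speedup theorem of \cite{chang_kopelowitz_pettie2019exp_separation} reduces any LCL of complexity $O(\log^* n)$ to producing a $k$-distance coloring for some constant $k$ and applying a constant-radius local rule, and by \cite{KST} one may iteratively construct continuous $k$-distance colorings on every continuous bounded-degree graph by repeatedly taking continuous maximal independent sets in appropriate power graphs. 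The implications (\ref{c:incont}) $\Rightarrow$ (\ref{c:ondist}) and (\ref{c:incont}) $\Rightarrow$ (\ref{c:on01}) are immediate since $\dist_\Delta$ and $\lab_\Delta$ are instances of continuous $\Delta$-regular trees.

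For (\ref{c:ondist}) $\Rightarrow$ (\ref{c:localcont}) I would exploit the compactness of the vertex space $X$ of $\dist_\Delta$ established in \cref{pr:compactness}. A continuous $\Pi$-coloring on $X$ takes only finitely many values, so by uniform continuity in the clopen basis described after \cref{pr:compactness} there is a constant $N \in \N$ such that the label assigned to any half-edge adjacent to $[x]$ is determined by the restrictions $x_k \restriction B(\mathfrak{r},N)$ for $k \leq N$. To simulate this rule on a finite $\Delta$-regular tree with identifiers, each vertex computes in $O(\log^* n)$ rounds a canonical $k$-distance $\ell_k$-coloring for every $k \leq N$, using Linial's classical algorithm on the appropriate power graphs followed by a local identifier-based renaming into exactly $\ell_k$ colors, and then reads off the continuous rule from its $N$-neighborhood equipped with these colorings.

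The substantive direction is (\ref{c:on01}) $\Rightarrow$ (\ref{c:ondist}). Given a continuous $\Pi$-coloring on $\lab_\Delta$, the plan is to construct an $\AutT$-equivariant continuous graph homomorphism $\Psi : \dist_\Delta \to \lab_\Delta$, so that the desired continuous $\Pi$-coloring on $\dist_\Delta$ is obtained by pulling back along $\Psi$. Concretely, $\Psi$ should send each $[x] \in X$ to a class of free $\{0,1\}$-labelings of $T_\Delta$ in a continuous, $\AutT$-equivariant manner. To produce such a $\Psi$, I would combine Bernshteyn's continuous version of the Lov\'asz Local Lemma with the framework of \cite{GJKS}, following the strategy of \cite{Bernshteyn2021local=cont}: the distance colorings present at each point of $X$ supply canonical local coordinates on $T_\Delta$, on top of which the continuous LLL is used to produce the additional symmetry-breaking $\{0,1\}$-bits.

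The main obstacle is that both the continuous LLL construction and the GJKS framework of \cite{Bernshteyn2021local=cont} are tailored to the Cayley graph setting, where the ambient automorphism group is a countable group acting freely by translations and every vertex carries a canonical label by a group element. In our setting $\AutT$ is uncountable and the vertex stabilizer $\AutR$ is non-trivial, so such canonical coordinates are not available a priori. To overcome this, the key new ingredient is to leverage the distance colorings built into the definition of $X$ in order to obtain canonical neighborhoods up to $\AutR$, and then verify that the symmetry-breaking construction carried out by the continuous LLL can be made equivariant under the full group $\AutT$ rather than only under a countable translation subgroup; I expect this verification to be the most delicate step of the argument.
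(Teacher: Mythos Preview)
Your cycle structure is sound, and the easy implications are handled correctly. The main divergence from the paper is in the hard implication starting from (\ref{c:on01}). The paper proves the strictly stronger statement (\ref{c:on01}) $\Rightarrow$ (\ref{c:incont}) for an \emph{arbitrary} continuous $\Delta$-regular tree $\fT$, not only for $\dist_\Delta$; your route recovers (\ref{c:incont}) only indirectly through (\ref{c:ondist}) $\Rightarrow$ (\ref{c:localcont}) $\Rightarrow$ (\ref{c:incont}).

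More importantly, there is a genuine gap in your plan for (\ref{c:on01}) $\Rightarrow$ (\ref{c:ondist}). You propose to build a continuous homomorphism $\Psi : \dist_\Delta \to \lab_\Delta$, i.e.\ a continuous equivariant map landing \emph{inside} the free part $\freeT/\sim$. The paper's argument does \emph{not} achieve this, and it is unclear that the continuous LLL alone does either. What the paper actually does (the Key Lemma~\ref{l:key}) is an iterative construction: at stage $n$ one produces a partial continuous $\{0,1\}$-labeling $\varphi_n$ such that \emph{any} continuous completion $\psi$ satisfies $h_\psi(Z)\subseteq X_{K_n,n}/\sim$, a compact $\AutT$-invariant set on which stabilizers only move points distance $>n$. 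The union $\bigcup_n \varphi_n$ need not be continuous, so one never obtains a single map into the free part. Instead, the compactness of $Y=\bigcap_N Y_{\le N}\subseteq \freeT/\sim$ is used to \emph{extend} the given continuous $\Pi$-coloring of $\lab_\Delta$ to some $Y_{\le N}$, and then a single completion of $\varphi_N$ suffices. You should either adopt this approximation-and-extension scheme, or explain why a genuine continuous map into the free part exists; the latter does not follow from the cited tools without essentially reproducing the former.

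A smaller point on (\ref{c:ondist}) $\Rightarrow$ (\ref{c:localcont}): on a \emph{finite} $\Delta$-regular tree the $N$-ball around a vertex close to a virtual half-edge is not isomorphic to $B(\mathfrak{r},N)$, so the continuous rule extracted from $\dist_\Delta$ cannot be applied directly there. The paper fixes this by first computing a $2N$-distance coloring and then greedily growing the tree locally (together with the $k$-distance colorings for $k\le N$) so that every original vertex sees a genuine $N$-ball; you should account for this step.
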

	
	It is clear that \eqref{c:incont} implies \eqref{c:ondist} and \eqref{c:on01}.
	Before we turn our attention to the most challenging part \eqref{c:on01} implies \eqref{c:incont} (\cref{t:from01}), we demonstrate that \eqref{c:localcont} implies \eqref{c:incont} (\cref{t:localcontinuous}), and \eqref{c:ondist} implies \eqref{c:localcont} (\cref{t:distcont}).
	These implications are fairly standard and almost follow from \cite{Bernshteyn2021LLL,elek2018qualitative,Bernshteyn2021local=cont}.
	The reason why we cannot use these results directly is that we consider LCL problems defined on half-edges.
	
	\begin{theorem}[\cite{Bernshteyn2021local=cont}]\label{t:localcontinuous}
		Let $\Pi$ be an LCL problem such that $\Pi\in \local(O(\log^* n))$.
		Then $\Pi\in \cont$.
	\end{theorem}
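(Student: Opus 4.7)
The plan is to combine two standard ingredients: the speedup theorem of Chang, Kopelowitz, and Pettie~\cite{chang_kopelowitz_pettie2019exp_separation}, which normalizes any $O(\log^* n)$ algorithm into a two-stage procedure, with the continuous version of the Kechris--Solecki--Todor\v{c}evi\'c maximal independent set construction~\cite{KST}, exactly in the spirit of \cite[Section~2]{Bernshteyn2021local=cont}. The only novelty over the literature is bookkeeping to accommodate half-edge LCLs on graphs without a fixed edge $\Delta$-coloring.

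First, invoke the speedup result: if $\Pi \in \local(O(\log^* n))$, then there exists a constant $k \in \mathbb{N}$ and a constant-radius local rule $\fA$ such that, given any $k$-distance coloring of the input graph with $O(\Delta^k)$ colors, applying $\fA$ in the $r$-neighborhood of each vertex (for some constant $r = r(\Pi, k)$) produces a correct $\Pi$-labeling of its adjacent half-edges. In effect, a $k$-distance coloring already breaks enough symmetry so that no further unique identifiers are needed and $\fA$ is a purely combinatorial function of isomorphism types of $k$-distance-colored $r$-neighborhoods.

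Second, fix a continuous $\Delta$-regular tree $\fT$ on a zero-dimensional Polish space $X$. By \cite[Lemma~2.3]{Bernshteyn2021local=cont} (which uses only that neighborhoods of clopen sets are clopen), one can continuously find a maximal $\fG$-independent set in any continuous graph $\fG$ of bounded degree; in particular this applies to the $k$-th power graph of $\fT$, which is again a continuous graph of bounded degree. Iterating this finitely many times (at each step working in the induced continuous graph on the complement of the previously chosen independent sets, and assigning the $i$-th independent set color $i$) yields a continuous $k$-distance coloring $\chi\colon X \to [\ell_k]$ of $\fT$ using a bounded palette of colors.

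Third, compose $\chi$ with the local rule. For each half-edge $h = (x,e) \in \fH(\fT)$, the isomorphism type of the $r$-neighborhood of $x$ in $\fT$, labeled by $\chi$, is a locally constant function of $h$, because $\fT$ has locally constant $r$-balls (a basic clopen set in $\fH(\fT)$ is determined by a finite configuration of clopen pieces of $X$) and $\chi$ is continuous. Hence $h \mapsto \fA(\text{labeled } r\text{-ball at } h)$ is a continuous, finitely-valued function on $\fH(\fT)$ producing a correct $\Pi$-coloring, witnessing $\Pi \in \cont$. The only mildly technical point, and the one where the half-edge formulation could in principle cause trouble, is making sure that $\fA$'s output on the half-edges adjacent to $x$ really is a continuous function of the clopen type of the labeled $r$-ball at $x$; but since $\fH(\fT)$ decomposes as a disjoint union of clopen pieces (one per ``colored direction'' from $x$, as in the discussion after \cref{subsub:continuous}), this is immediate once $\chi$ is continuous.
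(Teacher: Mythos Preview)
Your proposal is correct and follows essentially the same approach as the paper: invoke the Chang--Kopelowitz--Pettie speedup to reduce to a $k$-distance coloring plus a constant-radius rule, obtain a continuous $k$-distance coloring via the Kechris--Solecki--Todor\v{c}evi\'c/Bernshteyn construction, and compose. The paper's proof is terser (it cites \cite[Lemma~2.3]{Bernshteyn2021local=cont} directly for the continuous $k$-distance coloring rather than spelling out the MIS iteration, and it handles the half-edge continuity by noting that a $k$-distance coloring with $k>2$ partitions $\fH(\fT)$ into clopen pieces $X_{i,j}$ indexed by ordered color pairs), but the argument is the same.
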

	\begin{proof}

			By the results of \cite{chang_pettie2019time_hierarchy_trees_rand_speedup,chang_kopelowitz_pettie2019exp_separation}, we have that $\Pi$ can be solved by an algorithm that first solves the $k$-distance coloring problem, for some $k\in \mathbb{N}$, and then a local algorithm $\fA$ with complexity $O(1)$ is applied.
			Of course, we may assume that $k>2$.
			
			Let $\fT$ be a continuous forest on $X$.
			By \cite[Lemma~2.3]{Bernshteyn2021local=cont}, there is a continuous $k$-distance coloring $c$ of $\fT$.
			Define a partition
			$$\bigsqcup X_{s,i,j}=H(\fT),$$
			where $X_{s,i,j}$ is the set of half-edges $(v,e)\in H(\fT)$ such that $c\upharpoonright B_\fT(v,k)=s$, $c(v)=i$ and $c(w)=j$, where $e=\{v,w\}$.
			It follows from the definitions of $H(\fT)$ and $c$ that $\bigsqcup X_{s,i,j}$ is a finite clopen partition.
			
			For any $(v,e)\in X_{s,i,j}$, we define $\varphi(v,e)$ to be the value of $\fA(s)$ on the half-edge that belongs to the edge $\{v,w\}$ adjacent to the root such that $c(v)=i$ and $c(w)=j$.
			Note that this is well-defined as we assume that $k>2$.
			Now, it is easy to see that $\varphi$ is a $\Pi$-coloring of $\fT$.
	\end{proof}
	
	\begin{theorem}[\cite{elek2018qualitative,Bernshteyn2021local=cont}]
		\label{t:distcont}
		Let $\Pi$ be an LCL problem such that $\dist_\Delta$ admits a continuous $\Pi$-coloring.
		Then $\Pi\in \local(O(\log^* n))$. 
	\end{theorem}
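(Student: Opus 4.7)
The plan is to mimic Bernshteyn's argument (\cite{Bernshteyn2021local=cont}, Section~5.A), leveraging the compactness of the underlying space $X$ of $\dist_\Delta$ established in \cref{pr:compactness}, together with the standard fact that $k$-distance colorings with $\ell_k$ colors can be computed in $O(\log^* n)$ rounds. Let $f:\fH(\dist_\Delta)\to \Sigma$ be the hypothesized continuous $\Pi$-coloring. Since $X$ is a compact zero-dimensional Polish space and $\fH(\dist_\Delta)$ is a finite disjoint union of clopen subsets of $X\times X$, the half-edge space is compact as well. Because $\Sigma$ is finite and discrete, $f$ is locally constant, and compactness upgrades this to \emph{uniform} local constancy. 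Using the basis of $X$ provided in the excerpt---basic clopen sets of the form $\{[(x_k)_{k\in\N}]:x_k\restriction B(\mathfrak{r},N)=s_k \text{ for all } k\le N\}$---this gives an integer $N\in\N$ such that $f([(x_k)_k],\cdot)$ depends only on the finitely many restrictions $s_k:=x_k\restriction B(\mathfrak{r},N)$ for $k\le N$.

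The algorithm proceeds as follows. Given an input $n$-vertex $\Delta$-regular tree $T$, first compute, for each $k\le N$, a proper $k$-distance coloring using exactly $\ell_k$ colors. This is done by running Linial's coloring algorithm on the $k$-th power of $T$ (which has maximum degree $O(\Delta^k)=O(1)$, since $N$ and $\Delta$ are constants) followed by iterated color reduction down to $\ell_k$ colors; each such computation takes $O(\log^*n)$ rounds, and $N$ is constant. Each vertex $v$ then collects its $N$-neighborhood $B_T(v,N)$ together with these colorings in $O(1)$ further rounds. Viewing a rooted isomorphism $B_T(v,N)\hookrightarrow B(\mathfrak{r},N)\subseteq T_\Delta$ sending $v\mapsto \mathfrak{r}$, the data packages into maps $s_k^v:B(\mathfrak{r},N)\to \ell_k$ for $k\le N$. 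These $s_k^v$ extend greedily to full $k$-distance $\ell_k$-colorings of $T_\Delta$, so they determine a non-empty basic clopen set $U_v\subseteq X$. By the choice of $N$, the function $f$ is constant on each half-edge slot over $U_v$, so $v$ outputs on each of its $\Delta$ adjacent half-edges the label assigned by $f$ at an arbitrary representative of $U_v$.

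For correctness, the vertex constraint at $v$ is immediate since $f$ is a $\Pi$-coloring of $\dist_\Delta$ and the output at $v$ equals the $\Delta$-tuple of labels at $[(x_k^v)_k]$. For an edge $e=\{v,w\}$ of $T$, the $N$-views at $v$ and $w$ overlap consistently on $B_T(v,N)\cap B_T(w,N)$, so the associated classes $[(x_k^v)_k]$ and $[(x_k^w)_k]$ are adjacent in $\dist_\Delta$ via an automorphism of $T_\Delta$ moving $\mathfrak{r}$ to a neighbor; hence the two labels on $(v,e)$ and $(w,e)$ form the image under $f$ of an edge of $\dist_\Delta$, and the edge constraint is inherited. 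Virtual half-edges at boundary vertices pose no extra difficulty, as they participate only in vertex constraints and their ``missing'' coloring can be filled in by a fixed deterministic convention without affecting $f$. The hard part, and the part that needed new ideas beyond \cite{Bernshteyn2021local=cont}, is the matching between the abstract half-edge indexing in $\dist_\Delta$ (coming from coset representatives in $\AutT$) and the concrete half-edges at $v$ in $T$; fortunately, the $k$-distance coloring with $k\ge 2$ canonically distinguishes the $\Delta$ neighbors of each vertex by their colors, which makes the identification well-defined and consistent across adjacent local views. Altogether, the algorithm produces a valid $\Pi$-coloring of $T$ in $O(\log^*n)$ rounds, yielding $\Pi\in\local(O(\log^*n))$.
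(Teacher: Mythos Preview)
Your overall strategy matches the paper's: use compactness of $X$ to extract an $N$ such that $f$ depends only on $(x_k\restriction B(\mathfrak r,N))_{k\le N}$, compute $k$-distance colorings for $k\le N$ in $O(\log^* n)$ rounds, and then read off the output from $f$. The half-edge identification via a $2$-distance coloring is fine and is essentially what the paper does implicitly.

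The genuine gap is your treatment of vertices near virtual half-edges. You write that ``virtual half-edges at boundary vertices pose no extra difficulty, as they participate only in vertex constraints and their `missing' coloring can be filled in by a fixed deterministic convention without affecting $f$.'' This is precisely the point the paper flags as ``the main technical difficulty,'' and it cannot be waved away. Concretely: for a vertex $v$ with $\distance_T(v,\text{boundary})<N$, the ball $B_T(v,N)$ embeds properly into $B(\mathfrak r,N)$, so your $s_k^v$ are only partially defined. If you complete them independently at each vertex (``greedily'' or by any local rule depending only on $v$'s view), then for a \emph{real} edge $e=\{v,w\}$ near the boundary there is no reason the resulting classes $[(x_k^v)_k]$ and $[(x_k^w)_k]$ are adjacent in $\dist_\Delta$: adjacency requires a single element of $\prod_k T_\Delta^{[k]}$ whose shift by one step realizes both views, and your two independent completions need not glue. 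Without adjacency, $f$ gives no guarantee that the pair of labels on $(v,e)$ and $(w,e)$ lies in $\fE$. Note also that $f$ is only guaranteed constant on a \emph{basic} clopen set (full data on $B(\mathfrak r,N)$), not on the larger set of all completions of your partial $s_k^v$, so you cannot sidestep the issue by leaving the completion unspecified.

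The paper resolves this by first computing, in addition, a $2N$-distance coloring and then, marching through its color classes, \emph{globally} extending $T$ to a finite tree $T'$ together with extensions $(c_k')_{k\le N}$ so that $B_{T'}(v,N)\cong B(\mathfrak r,N)$ for every $v\in V(T)$. Because the extensions are done once and for all on a single larger tree, adjacent vertices automatically have shift-compatible $N$-views, and the edge constraint is inherited from $f$. Your ``fixed deterministic convention'' would have to amount to exactly such a coordinated extension; as written, it does not.
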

	\begin{proof}
		Suppose that $\varphi$ is a continuous $\Pi$-coloring of $\dist_\Delta$. By \cref{pr:compactness} and the definition of the topology on $H(\fT)$, there is some $N\in \mathbb{N}$ such that the value of $\varphi$ on the half-edges around $[(x_k)_{k\in \mathbb{N}}]$ depends only on the isomorphism type of the restriction of $(x_k)_{k \leq N}$ to $B_{\dist_\Delta}(\mathfrak{r},N)$, where we recall that $\mathfrak{r}$ is the root of $T_\Delta$.
		
		We define a local algorithm $\mathcal{A}$ of complexity $N$ from $\varphi$ as follows: $\mathcal{A}$ is defined on the isomorphism types of a neigborhood $B(\mathfrak{r},N)$ with a sequence of vertex labelings $(c_k)_{k\leq N}$, where $c_k$ is a solution to the $k$-distance coloring problem on $B(\mathfrak{r},N)$ for every $k\leq N$. Note that this information determines a basic open neighborhood in $\dist_\Delta$, on which $\varphi$ is constant. Let $\mathcal{A}$ evaluate to this constant value.
		
		Given a finite regular forest $T$, we describe how to solve $\Pi$ in $O(\log^* n)$ rounds with deterministic local algorithm.
		The main technical difficulty is that $\fA$ assumes as an input an isomorphic copy of the rooted graph $B(\mathfrak{r},N)$, therefore we need to trick the algorithm whenever it is applied to a vertex that is close to a virtual half-edge.
		
		First, we produce a sequence $(c_k)_{k\leq N}$ of vertex labelings that solve the $k$-distance coloring problem on $T$ for every $k\leq N$.
		As $N\in \mathbb{N}$ is fixed, this requires $O(\log^* n)$ rounds.
		
		Second, we construct a solution to the $2N$-distance coloring problem, let $C_1,\dots, C_{\ell_{2N}}$ be the obtained color classes. Now, by induction on $i \leq \ell_{2N}$
		we create an increasing sequence of forests $(T_i)_{i \leq \ell_{2N}}$ together with maps $c^{i}_k:{V}(T_i) \to \ell_k$ so that 
		\begin{itemize}
			\item $c^0_k=c_k$, $T_0=T$, for every $k \leq N$, $c^0_k \subseteq \dots \subseteq c^{i}_k$, and each $c^{i}_k$ is a solution to the $k$-distance coloring problem on $T_{j}$,
			\item if $v \in C_i$ then $B_{T_i}(v,N)$ is isomorphic to $B_{T_{\Delta}}(\mathfrak{r},N)$. 
		\end{itemize}
		
		Formally, this can be encoded by each vertex $v \in V(T)$ adjacent to a virtual half-edge computing the labelings $c^i_k$ and computing the isomorphism type of $B_{T_i}(v,N)$, and finally communicating this information to every vertex in their $N$-neighborhood. 
		
		Observe that both conditions can be checked in an $N$-neighborhood of a vertex. Moreover, given all these data for some $i$, we can find the appropriate objects for $i+1$ by a greedy LOCAL algorithm of complexity $N$, as the number of colors that we can use to solve the $k$-distance coloring problem is $\ell_k=|B(\mathfrak{r},k)|$, and vertices in $C_{i+1}$ can define these objects independently, since their distance is bigger than $2N$.
		This requires $O(\log^* n)+O(1)$ rounds.

		Finally, we apply $\fA$ to the isomorphism type of $(c^{\ell_{2N}}_k\upharpoonright B_{T_{\ell_{2N}}}(v,N))_{k\leq N}$ at every vertex $v\in V(T)$.
		It is routine to verify that this produces a $\Pi$-coloring of $T$ in $O(\log^* n)$ rounds.  
	\end{proof}
	
	We remark that a similar trick to the second half of the above proof has been utilized in \cite[Section~5.3]{Bernshteyn2021local=cont}.

	\subsection{The forest $\tzeroone$}
	
	In order to finish the proof of \cref{t:maincontinuous}, we have to show the following.
	
	\begin{theorem}
		\label{t:from01}
		Let $\Pi$ be an LCL problem on $\Delta$-regular forests that admits a continuous solution on $\fT^{\{0,1\}}_\Delta$ and $\mathcal{T}$ be a $\Delta$-regular continuous forest.
		Then $\Pi$ admits a continuous solution on $\mathcal{T}$. 
	\end{theorem}
	
	Our strategy follows closely Bernshteyn's argument \cite{Bernshteyn2021local=cont}.
	Namely, given a continuous forest $\fT$ on $X$, we construct a continuous $\{0,1\}$-labeling of $X$ in such a way that the $\{0,1\}$-labeled tree rooted at any given vertex $x\in X$, i.e., the vertex labeled connected component of $x$ in $\fT$, is ``close'' to $\freeT$.
	Here ``close'' means that the continuous function on $\freeT/\sim$ that solves $\Pi$ can be extended to these elements. 
	Before we give a formal proof, we need to introduce some technical notation and recall the continuous Lov\'{a}sz Local Lemma of Bernshteyn \cite{Bernshteyn2021local=cont}.

	\subsubsection{Hyperaperiodic elements}
	An element $[\alpha]\in \freeT/\sim$ is called \emph{hyperaperiodic} if the closure of the connectivity component of $[\alpha]$ in $\tzeroone$ in the space $\{0,1\}^{T_\Delta}/\sim$ is a subset of $\freeT/\sim$.
	Equivalently, the closure is a compact subset of $\freeT/\sim$, or every element that can be approximated by a sequence from $\AutT\cdot \alpha$ is in the space $\freeT$.
	We also abuse the notation and say that $\alpha\in \freeT$ is hyperaperiodic if $[\alpha]$ is such.
	
	This notion is well studied in the context of countable groups \cite{ColorinSeward,ST,GJKS,Bernshteyn2021local=cont}.
	It is a highly non-trivial result for example that for every countable group $\Gamma$ there exists some $x \in \{0,1\}^\Gamma$, such that the closure of the orbit of $x$, with respect to the left-shift action of $\Gamma$ on $\{0,1\}^\Gamma$ is contained in the free part of this action. Now, one can define the spaces analogous to $\{0,1\}^{T_\Delta}$ and $\freeT/\sim$ for any decorated graph in the place of $T_\Delta$. The aforementioned results imply that for any finitely generated group $\Gamma$, if $C$ is Cayley graph of $\Gamma$ (with edges decorated by the generators), then $\{0,1\}^\Gamma/\sim$ admits hyperaperiodic element.  
	
	As a byproduct of our main result, we show that these elements indeed exist even in our setting, where the automorphism group is much larger.
	It is an exciting open problem to understand for which other classes of (structured) graphs hyperaperiodic elements exist.
	
	\begin{theorem}\label{t:hyperaperiodic}
		There are hyperaperiodic elements in $\freeT/\sim$.    
	\end{theorem}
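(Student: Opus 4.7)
The plan is to construct a non-empty closed $\AutT$-invariant subset $Y \subseteq \freeT$ of $\{0,1\}^{T_\Delta}$. Given such $Y$, every $[\alpha] \in Y/\sim$ satisfies $\overline{\AutT \cdot [\alpha]} \subseteq Y/\sim \subseteq \freeT/\sim$, so $[\alpha]$ is hyperaperiodic by the very definition.

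To define $Y$, fix a sufficiently fast-growing function $\phi:\mathbb{N}\to\mathbb{N}$ (for concreteness, $\phi(d) = 2d + C$ for a large constant $C = C(\Delta)$), and let
\[
Y := \bigl\{\, \alpha \in \{0,1\}^{T_\Delta} : \forall v \neq w \in T_\Delta,\; (B(v,\phi(d)),\alpha) \not\cong (B(w,\phi(d)),\alpha) \,\bigr\},
\]
where $d := \distance(v,w)$ and $\cong$ denotes rooted isomorphism of labeled trees. Each pairwise non-isomorphism condition depends on finitely many labels of $\alpha$, so $Y$ is closed; the condition is stated purely in graph-theoretic terms, so $Y$ is $\AutT$-invariant. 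To see $Y \subseteq \freeT$: if $\id \neq g \in \AutT$ and $g \cdot \alpha = \alpha$ for some $\alpha \in Y$, picking any $v$ with $w = g^{-1}(v) \neq v$, the restriction of $g$ yields a label-preserving rooted isomorphism $B(w,\phi(d)) \to B(v,\phi(d))$, contradicting $\alpha \in Y$.

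For non-emptiness of $Y$, I would apply the (asymmetric) Lov\'asz Local Lemma to the uniform product measure on $\{0,1\}^{T_\Delta}$. Index bad events by triples $(v,w,\psi)$, where $v \neq w$, $d = \distance(v,w)$, and $\psi$ is a rooted isomorphism between the unlabeled balls $B(v,\phi(d))$ and $B(w,\phi(d))$; let $A_{v,w,\psi}$ denote the event that $\alpha \circ \psi = \alpha$ on $B(v,\phi(d))$. An elementary orbit count yields $\P(A_{v,w,\psi}) \leq 2^{-c |B(v,\phi(d))|}$ for some $c = c(\Delta) > 0$, while $A_{v,w,\psi}$ is independent of every bad event whose underlying balls are disjoint from $B(v,\phi(d)) \cup B(w,\phi(d))$. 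The number of dependent events is at most single-exponential in $\phi(d)$, counting nearby pairs and rooted isomorphisms of their balls. Choosing $\phi$ fast enough that the double-exponential decay of the probabilities beats the single-exponential dependency growth, the asymmetric LLL then gives positive probability of avoiding all bad events, so $Y \neq \emptyset$.

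The main obstacle is calibrating $\phi$ and the LLL weights $x_{v,w,\psi}$ so the asymmetric LLL conditions hold uniformly across all scales $d$. For small $d$ the bad-event probabilities are only modestly small, but so is the dependency count, and a careful tally of rooted automorphisms of $k$-balls in $T_\Delta$ (bounded by $((\Delta-1)!)^{O(\Delta^k)}$) is needed; for large $d$ the double-exponential decay amply compensates. This mirrors Bernshteyn's strategy for Cayley graphs of countable groups~\cite{Bernshteyn2021local=cont}, with the added subtlety that here $\AutT$ is an uncountable topological group---fortunately, each bad event still depends on only finitely many labels of $\alpha$, so the classical discrete LLL is applicable once the bookkeeping is correctly set up.
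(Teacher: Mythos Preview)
Your strategy—exhibit a closed $\AutT$-invariant $Y\subseteq\freeT$ and use LLL for non-emptiness—is natural, but there is a genuine gap in the LLL step that breaks the argument for $\Delta\ge 5$.

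The problem is the count of rooted isomorphisms $\psi:B(v,r)\to B(w,r)$. This number equals $|\operatorname{Aut}(B(\mathfrak{r},r))|$, which is an iterated wreath product of symmetric groups and satisfies
\[
\log_2 |\operatorname{Aut}(B(\mathfrak{r},r))|\;\sim\;\frac{\log_2((\Delta-1)!)}{\Delta-1}\,|B(\mathfrak{r},r)|.
\]
You correctly record this as $((\Delta-1)!)^{O(\Delta^r)}$, but then describe the total dependency count as ``single-exponential in $\phi(d)$'' and conclude that the probability decay wins. It does not: write $c''=\log_2((\Delta-1)!)/(\Delta-1)$; one checks $c''>1$ once $\Delta\ge 5$. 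On the other hand, for each fixed $\psi$ the exponent in $\P(A_{v,w,\psi})=2^{-E}$ satisfies $E=|B(v,r)|-\#(\text{cycles of }\psi)\le |B(v,r)|$, with equality for any $\psi$ extending to a hyperbolic automorphism (no finite orbits). Thus the best possible constant in your bound $\P(A_{v,w,\psi})\le 2^{-c|B|}$ is $c\le 1<c''$, so neither a union bound over $\psi$ nor the LLL with events indexed by $(v,w,\psi)$ can close. This is exactly where $\Delta$-regular trees differ from Cayley graphs: in Bernshteyn's setting the relevant automorphism group is the countable group itself, so there is essentially one ``$\psi$'' per pair.

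The paper circumvents this obstacle by a different mechanism. In the key Lemma~\ref{l:key} the bad events are indexed by pairs $(z,r)$ with $r$ ranging over a set of size $O(\Delta^{O(L)})$ (Claim~\ref{cl:numberofchoices}), \emph{not} by isomorphisms; and the probability estimate does not union over $g$. Instead it observes that any $g$-invariant extension forces the $0/1$ imbalance on each sphere $S_{\fT}(z,i)$ to match that on $S_{\fT}(r,i)$, and bounds the probability of all these matches via Stirling and independence across spheres (Lemma~\ref{l:cnrz}). This yields a probability bound that is insensitive to $|\operatorname{Aut}|$, and the iterative scheme then produces the compact invariant $Y\subseteq\freeT/\!\sim$ whose elements are hyperaperiodic. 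If you want to rescue your direct approach, you would need a bound on $\P\bigl(\exists\psi:A_{v,w,\psi}\bigr)$ that does not pass through a sum over $\psi$; the paper's sphere-count idea is one way to do this.
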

	The statement follows from the last part of the proof of \cref{t:from01}. 
	The following is the main definition that we use to approximate hyperaperiodic elements.
	
	\begin{definition} Let $g\in \AutT$, $\alpha\in {\{0,1\}}^{T_\Delta}$, $v\in T_\Delta$, $n\in \mathbb{N}$ and $K\in \mathbb{N}$.
		We say that \emph{$(\alpha,K)$ $n$-breaks $g$ at $v$} if either $\distance(g\cdot v,v)> n$ or $g\cdot v=v$, or $g\cdot \alpha'\not= \alpha'$ for every $\alpha'\in\{0,1\}^{T_\Delta}$ such that
		$$\alpha\upharpoonright B(v,K)=\alpha'\upharpoonright B(v,K).$$
		Define
		$$X_{K,n}=\left\{\alpha\in \{0,1\}^{T_\Delta}:\forall v \in T_\Delta,\forall g\in \AutT \  (\alpha,K) \ n\operatorname{-breaks} \ g \ \operatorname{at} \  v\right\}$$
		for every $K,n \in \mathbb{N}$.
	\end{definition}
	
	\begin{proposition}\label{pr:CompactInv}
		Let $n>1$, $K\in \mathbb{N}$ and suppose that $X_{K,n}\subseteq \{0,1\}^{T_\Delta}$ is non-empty.
		The sets $X_{K,n}$ and $X_{K,n}/\sim$ are compact, $X_{K,n}$ is $\AutT$-invariant and we have $\distance(g\cdot v,v)>n$ for every $v\in T_\Delta$ and $\id_{T_\Delta}\not =g\in \AutT$ that satisfies $g\cdot \alpha=\alpha$ for some $\alpha\in X_{K,n}$.
		In particular, $\fG_{\{0,1\},\Delta}\upharpoonright (X_{K,n}/\sim)$ has girth at least $n$.	
	\end{proposition}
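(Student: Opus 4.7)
The plan is to verify the four assertions in the listed order, since the girth claim leans on the displacement property, which in turn uses the compactness and invariance structure.

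First, for compactness of $X_{K,n}$ I would show it is closed in the compact product $\{0,1\}^{T_\Delta}$: whether $(\alpha,K)$ fails to $n$-break a fixed pair $(g,v)$ with $0<\distance(gv,v)\le n$ depends only on the finite restriction $\alpha\upharpoonright B(v,K)$, so the set of bad $\alpha$'s for this pair is clopen, and the complement of $X_{K,n}$ is the union of such clopen sets over all such pairs. The $\AutT$-invariance is a conjugation computation: given $\alpha\in X_{K,n}$ and $h\in\AutT$, any putative witness $(v,g,\alpha')$ showing that $(h\cdot\alpha,K)$ fails to $n$-break $g$ at $v$ transports under the map $(v,g,\alpha')\mapsto (h^{-1}v,\, h^{-1}gh,\, h^{-1}\cdot\alpha')$ to a witness against $\alpha\in X_{K,n}$, using that $h$ is an isometry and that the $B(v,K)$-agreement is preserved along this conjugation. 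Compactness of $X_{K,n}/\sim$ then follows as the continuous image of a compact set.

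For the displacement statement, suppose $g\ne \id_{T_\Delta}$, $g\cdot\alpha=\alpha$ for some $\alpha\in X_{K,n}$, yet $\distance(gv,v)\le n$ for some $v\in T_\Delta$. If $gv\ne v$, the defining clause of $X_{K,n}$ at $(v,g)$ forces $g\cdot\alpha'\ne\alpha'$ for every $\alpha'$ agreeing with $\alpha$ on $B(v,K)$; taking $\alpha'=\alpha$ contradicts the hypothesis. If instead $gv=v$, then since $g\ne\id_{T_\Delta}$ I can pick a vertex $w$ moved by $g$ at minimal distance from $v$: by minimality its neighbor $w'$ along the geodesic to $v$ satisfies $gw'=w'$, so $gw$ is a neighbor of $w'$ distinct from $w$, giving $\distance(gw,w)=2\le n$ (using $n>1$), and the previous case applied at $w$ yields a contradiction.

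For the girth claim, assume a cycle $[\beta_0],\dots,[\beta_\ell]=[\beta_0]$ of length $\ell<n$ exists in $\fG_{\{0,1\},\Delta}\upharpoonright (X_{K,n}/\sim)$, and fix a representative $\beta_0\in X_{K,n}$. I would lift the cycle step-by-step to a walk $v_0=\mathfrak{r},v_1,\dots,v_\ell$ in $T_\Delta$ through the graph homomorphism $v\mapsto [(T_\Delta,v,\beta_0)]$, using at each step that the stabiliser $\Gamma_{\beta_0}=\{g\in\AutT:g\cdot\beta_0=\beta_0\}$ acts transitively on each fiber to transport an arbitrary lift of the next edge into one based at $v_{i-1}$. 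The endpoint then satisfies $v_\ell=\phi\mathfrak{r}$ for some $\phi\in\Gamma_{\beta_0}$ with $\distance(\phi\mathfrak{r},\mathfrak{r})\le\ell<n$; if $\phi\ne\id_{T_\Delta}$ the displacement property just proved is violated, and if $\phi=\id_{T_\Delta}$ the lift is a closed walk of length $\ell\ge 3$ in the tree $T_\Delta$, which necessarily contains an immediate backtrack $v_{i-1}=v_{i+1}$, yielding the impossible identification $[\beta_{i-1}]=[\beta_{i+1}]$ in the simple cycle. I expect the lifting step to be the main technical point, as it requires verifying that the quotient map behaves covering-like on the connected component of $[\beta_0]$.
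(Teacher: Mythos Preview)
Your proof is correct and follows essentially the same route as the paper's: closedness via dependence on $\alpha\upharpoonright B(v,K)$, invariance by conjugation, the fixed-point-to-distance-$2$ reduction for displacement, and the girth claim by producing a stabilizing automorphism of small displacement from a short cycle. Your treatment of the girth step is in fact more explicit than the paper's one-line sketch (which simply refers back to the analogous computation in an earlier proposition): you spell out the covering-like behaviour of the map $v\mapsto[(T_\Delta,v,\beta_0)]$ and handle the degenerate case $\phi=\id_{T_\Delta}$ via the backtracking argument, which is exactly the content hidden behind the paper's phrase ``the composition of the elements of $\AutT$ that witness the edge relations.''
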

	\begin{proof}
		It is easy to see from the definition that $X_{K,n}$ is $\AutT$-invariant.
		Suppose that $\alpha_\ell\to \alpha$ in $\{0,1\}^{T_\Delta}$, where $\alpha_\ell\in X_{K,n}$ for every $\ell\in \mathbb{N}$.
		Let $v\in T_\Delta$ and $g\in \AutT$.
		We need to show that $(\alpha,K)$ $n$-breaks $g$ at $v$.
		If not, then $0<\distance(g\cdot v,v)\le n$ and we find $\alpha'\in \{0,1\}^{T_\Delta}$ such that $g\cdot \alpha'=\alpha'$ and $\alpha\upharpoonright B(v,K)=\alpha'\upharpoonright B(v,K)$.
		Taking $\ell\in \mathbb{N}$ large so that $\alpha\upharpoonright B(v,K)=\alpha_\ell\upharpoonright B(v,K)$, we see that $(\alpha_\ell,K)$ does not $n$-break $g$ at $v$ as well, a contradiction.
		This shows that $X_{K,n}$, and consequently $X_{K,n}/\sim$, is closed and hence compact.
		
		Let $g\in \AutT$ be such that $g\cdot \alpha=\alpha$ for some $\alpha\in X_{K,n}$.
		It is easy to see that if there is $w\in T_\Delta$ such that $g\cdot w=w$, then either $g=\id_{T_\Delta}$, or we can find $v\in T_\Delta$ such that $\distance(g\cdot v,v)= 2$.
		Altogether, if $g\not=\id_{T_\Delta}$ and there is a $v$ with  $\distance(g\cdot v,v) \leq n$, then there is a $v\in T_\Delta$ so that $0<\distance(g\cdot v,v)\le n$.
		From the definition of $X_{K,n}$, we have that $(\alpha,K)$ $n$-breaks $g$ at $v$.
		But that contradicts the fact that $g\cdot \alpha=\alpha$, hence, $g=\id_{T_\Delta}$ or $\distance(g\cdot v,v)>n$ for every $v\in T_\Delta$.
		
		The additional part follows by the same argument as in \cref{pr:ContTree2}, namely, if there were a non-trivial cycle in $\fG_{\{0,1\},\Delta}\upharpoonright (X_{K,n}/\sim)$ of length at most $n$, then the composition of the elements of $\AutT$ that witness the edge relations would produce a $g\in \AutT$ that fixes some $\alpha\in X_{K,n}$ and satisfies $0<\distance(g\cdot \mathfrak{r},\mathfrak{r})\le n$, a contradiction.
	\end{proof}

	\subsubsection{Continuous Lov\'{a}sz Local Lemma}
	In order to construct/approximate hyperaperiodic elements, Bernshteyn \cite{Bernshteyn2021local=cont} developed a continuous version of the Lov\'{a}sz Local Lemma (LLL).
	Namely, he found a sufficient LLL-style condition for the existence of a continuous solution to a continuous constraint satisfaction problem (CSP) defined on a zero-dimensional Polish space, see the definitions below.
	Rather surprisingly, a similar but incomparable sufficient condition for the existence of $O(\log^*n)$ deterministic algorithms was studied in the distributed computing literature \cite{brandt_grunau_rozhon2020tightLLL}.
	We note that both conditions are tight, see the discussion in \cite[Section~1.A.2]{Bernshteyn2021local=cont}.
	
	Let $X$ be a set and $k \in \mathbb{N}$. For a finite set $D \subseteq X$, an $(X,k)$-\emph{bad event with domain $D$} is an element of $k^D$. A \emph{constraint $\mathscr{B}$} with domain $D$ is a collection of $(X,k)$-bad events with domain $D$; we will use the notation $\dom(\mathscr{B})=D$, and omit $X$ and $k$ if it is clear from the context. A \emph{CSP} $\mathfrak{B}$ (on $X$ with range $k$)
	is a set of $(X,k)$-constraints.
	Define the \emph{maximal probability of $\mathfrak{B}$} as
	\[p(\mathfrak{B})=\sup_{\mathscr{B} \in \mathfrak{B}}\frac{|\mathscr{B}|}{k^{|dom(\mathscr{B})|}},\]
	the \emph{vertex degree of $\mathfrak{B}$} as 
	\[vdeg(\mathfrak{B})=\sup_{x \in X}|\{\mathscr{B}\in \mathfrak{B}:x \in dom(\mathscr{B})\}|,\]
	and the \emph{order of $\mathfrak{B}$} by
	\[ord(\mathfrak{B})=\sup_{\mathscr{B} \in \mathfrak{B}} |\dom(\mathscr{B})|.\]
	If $X$ is a zero-dimensional Polish space, a CSP $\mathfrak{B}$ on $X$ is \emph{continuous} if for every
	set $S$ of functions $\{1,\dots, n\}\to k$ and $U_2,\dots,U_n \subseteq X$ clopen the set
	\[\{x_1: \text{there are $x_2 \in U_2,\dots, x_n \in U_n$ such that } x_1,\dots, x_n \text{ are distinct and } S(x_1,\dots, x_n) \in \mathfrak{B}\}\]
	is clopen, where $S(x_1,\dots,x_n)$ stands for the collection of functions determined by the elements of $S$, i.e., $f \in S(x_1,\dots,x_n)$ if for some $\varphi \in S$ we have $f(x_i)=j \iff \varphi(i)=j$. 
	
	A $k$-coloring $f:X\to k$ \emph{satisfies (or solves) the CSP $\mathfrak{B}$} if $f$ has no restriction that is a bad event. The following theorem has been proven in \cite{Bernshteyn2021local=cont}:
	
	\begin{theorem}[\cite{Bernshteyn2021local=cont}]
		\label{t:antonLLL}
		Let $X$ be a zero-dimensional Polish space, $k \in \mathbb{N}$, and $\mathfrak{B}$ be a continuous CSP on $X$ with range $k$ such that 
		\[p(\mathfrak{B})\cdot \vdeg(\mathfrak{B})^{\ord(\mathfrak{B})}<1.\]
		Then $\mathfrak{B}$ admits a continuous solution.
	\end{theorem}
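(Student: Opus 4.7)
The plan is to prove this by adapting the classical Moser--Tardos / probabilistic LLL argument to the continuous setting, exploiting that $X$ is zero-dimensional and that $\mathfrak{B}$ is continuous in the clopen sense.

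First I would \emph{discretize}: fix a refining sequence of clopen partitions $\mathcal{P}_0 \prec \mathcal{P}_1 \prec \cdots$ of $X$ whose meshes shrink to zero (which exists since $X$ is a zero-dimensional Polish space). For each $n$, call a constraint $\mathscr{B} \in \mathfrak{B}$ \emph{$\mathcal{P}_n$-resolved} if the points of $\dom(\mathscr{B})$ lie in distinct pieces of $\mathcal{P}_n$. By the definition of a continuous CSP, applied to each of the finitely many possible ``local configurations'' of $\mathscr{B}$, the family of $\mathcal{P}_n$-resolved constraints groups into a genuine finite combinatorial CSP $\mathfrak{B}_n^{\star}$ on the pieces of $\mathcal{P}_n$, whose parameters $p,\vdeg,\ord$ are bounded above by those of $\mathfrak{B}$. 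Applying the classical LLL under the preserved hypothesis $p \cdot \vdeg^{\ord} < 1$ then produces a solution $f_n$ that is constant on each piece of $\mathcal{P}_n$, hence continuous.

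The main step is to pass from the sequence $\{f_n\}$ to a single continuous $f: X \to k$ satisfying all of $\mathfrak{B}$. A naive compactness argument in $k^X$ only yields a Borel cluster point which need not be continuous, so this is insufficient. Instead I would pursue a coherent inductive construction in which $f_n$ is obtained from $f_{n-1}$ by resampling only within a clopen ``repair region'' $R_n$ consisting of those pieces of $\mathcal{P}_n$ on which a newly appearing constraint in $\mathfrak{B}_n^{\star} \setminus \mathfrak{B}_{n-1}^{\star}$ is violated by $f_{n-1}$. Using the strict inequality $p \cdot \vdeg^{\ord} < 1$, a Moser--Tardos-style witness-tree argument should show that each point of $X$ lies in only finitely many $R_n$, so the sequence $f_n(x)$ stabilizes pointwise and the limit $f$ is locally constant around every point --- i.e., continuous.

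The main obstacle is this last geometric control of the repair regions $R_n$: they must be \emph{clopen} (to preserve continuity of each $f_n$, which is where the continuity of $\mathfrak{B}$ is used, ensuring that the set of failing configurations is clopen) and their multiplicities at each individual point must be summably finite (which is where the LLL slack enters, via an entropy-compression / witness-tree bound adapted to the infinitary setting). Translating the standard discrete Moser--Tardos estimates into quantitative statements about clopen subsets of $X$, while maintaining the clopen structure of all intermediate objects, is the heart of the argument.
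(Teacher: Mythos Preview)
The paper does not prove this theorem; it is quoted verbatim as a black box from \cite{Bernshteyn2021local=cont}, so there is no ``paper's own proof'' to compare against. What follows is therefore an assessment of your sketch on its own merits.

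Your second paragraph contains a genuine gap. You assert that the $\mathcal{P}_n$-resolved constraints assemble into a finite CSP $\mathfrak{B}_n^\star$ on the pieces of $\mathcal{P}_n$ ``whose parameters $p,\vdeg,\ord$ are bounded above by those of $\mathfrak{B}$.'' The bound on $\ord$ is immediate and the bound on $p$ is plausible, but the bound on $\vdeg$ fails in general. The quantity $\vdeg(\mathfrak{B})$ controls how many constraints contain a single \emph{point} $x$; it says nothing about how many distinct tuples of $\mathcal{P}_n$-pieces can arise as domains of constraints meeting a fixed piece $P$. Different points $x,x'\in P$ can participate in constraints whose other members lie in entirely different pieces, so the projected degree of $P$ in $\mathfrak{B}_n^\star$ can be arbitrarily large (and $\mathfrak{B}_n^\star$ need not even be finite if $X$ is non-compact). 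Since the hypothesis $p\cdot\vdeg^{\ord}<1$ is exactly what you feed into the classical LLL at each stage, this step does not go through as written.

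The third paragraph is closer in spirit to what Bernshteyn actually does: an iterative recoloring in which one repeatedly resamples a clopen ``bad'' region and uses the strict LLL slack to show the process stabilizes everywhere, yielding a locally constant limit. But the argument operates directly on $X$ rather than via a discretization to a finite CSP, precisely to avoid the degree blow-up above; the continuity hypothesis is used to guarantee that the set of currently-violated points is clopen at each stage, and the inequality $p\cdot\vdeg^{\ord}<1$ (which is much stronger than the symmetric LLL criterion $e\,p\,d<1$) is what makes a straightforward union-bound over the $\le\vdeg$ constraints at each of the $\le\ord$ resampled points succeed. If you drop the discretization-to-finite-CSP idea and instead run the Moser--Tardos repair directly on clopen subsets of $X$, your outline becomes a reasonable plan.
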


	\subsubsection{The continuous forest $\fT$}
	The aim of this section is to show that every continuous $\{0,1\}$-labeling of vertices of a continuous forest $\fT$ naturally induces a continuous homomorphism from $\fT$ to $\fG_{\{0,1\},\Delta}$.
	The main technical difficulty is that there is no unified way how vertices of $\fT$ ``view'' their connected component and its automorphism group.
	To overcome this difficulty we first break local symmetry by a proper edge coloring. Proper edge coloring in our formulation is a map $c:H(\mathcal{T}) \to k$ so that the multiset of colors appearing around any vertex $v$ is injective and if $g,h$ are half-edges forming a real edge $e$, then $c(g)=c(h)$. Note that, since it contains no virtual half-edges, for a continuous forest it is equivalent to the usual formulation of the problem. 
	
	\begin{proposition}\label{pr:edgebreaksymm}
		Let $\fT$ be a continuous forest.
		There is a continuous map $\Theta_\fT:H(\fT)\to (2\Delta-1)$ that solves the proper edge coloring problem.
	\end{proposition}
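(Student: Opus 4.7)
The plan is to recognize proper $(2\Delta-1)$-edge coloring as an LCL on $\Delta$-regular trees of deterministic local complexity $O(\log^* n)$, and then invoke \cref{t:localcontinuous} to obtain a continuous solution on $\fT$.

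Formally I would set up $\Pi = ([2\Delta-1], \fV, \fE)$ in the sense of \cref{def:LCL_trees} as follows. Let $\fE$ consist of the cardinality-$2$ multisets $\{i,i\}$ for $i \in [2\Delta-1]$, so that the two half-edges of any real edge must receive identical labels; this means one can unambiguously speak of the color of an edge. Let $\fV$ consist of all cardinality-$\Delta$ multisets of pairwise distinct labels from $[2\Delta-1]$, so that the $\Delta$ half-edges incident to each vertex receive pairwise distinct colors. A $\Pi$-coloring of a continuous tree $\fT$ is then exactly a continuous proper $(2\Delta-1)$-edge coloring of $\fT$, because $\fT$ is genuinely $\Delta$-regular and contains no virtual half-edges.

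To verify $\Pi \in \local(O(\log^* n))$, I would run Linial's classical $(\Delta'+1)$-vertex-coloring algorithm on the line graph of the input, whose maximum degree is at most $\Delta' = 2(\Delta-1)$; in $O(\log^* n)$ rounds this produces a proper edge coloring with $2\Delta - 1$ colors on any finite graph of maximum degree $\Delta$. On a finite $\Delta$-regular tree $T$ (with possibly virtual half-edges at vertices of real degree $d < \Delta$), each vertex first uses the output of the line-graph algorithm on its $d$ real half-edges and then deterministically labels its $\Delta - d$ virtual half-edges greedily using distinct leftover colors; since $2\Delta - 1 - d \ge \Delta - d$, enough colors are available, and the greedy choice at each vertex uses only local information about its identifier. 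The result is a correct $\Pi$-coloring computed in $O(\log^* n)$ rounds, so $\Pi \in \local(O(\log^* n))$.

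Applying \cref{t:localcontinuous} to $\Pi$ immediately yields a continuous map $\Theta_\fT\colon \fH(\fT) \to (2\Delta-1)$ that is a $\Pi$-coloring, i.e., a continuous proper $(2\Delta-1)$-edge coloring of $\fT$. The only delicate point is bookkeeping around virtual half-edges in the finite LCL formulation; this is purely a technicality since virtual half-edges do not appear in the continuous setting. As a sanity check, the same conclusion can be reached directly by using \cite[Lemma~2.3]{Bernshteyn2021local=cont} to obtain a continuous distance-$2$ vertex coloring of $\fT$ with a constant number of colors, and then applying a constant-depth local rule (the Linial-on-line-graph argument) to derive the edge coloring; this is essentially the same argument unfolded inside the proof of \cref{t:localcontinuous}.
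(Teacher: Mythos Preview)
Your proof is correct and follows essentially the same route as the paper: it observes that proper $(2\Delta-1)$-edge coloring has $O(\log^*n)$ local complexity via Linial's algorithm on the line graph, and then invokes \cref{t:localcontinuous} (or equivalently \cite{KST}) to obtain a continuous solution. The only cosmetic difference is that the paper phrases this as applying the relevant theorem directly to the line graph of $\fT$, whereas you encode edge coloring as an LCL on $\Delta$-regular trees and apply \cref{t:localcontinuous} to $\fT$ itself; both formulations unwind to the same argument.
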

	\begin{proof}
		This is a consequence of Bernshteyn's transfer \cref{t:localcontinuous} combined with \cite[Theorem 4.1 and Section 5]{linial92LOCAL}, applied to the line graph of $\fT$.
		Recall that a line graph of $\fT$ is a graph with a vertex set equal to the edge set of $\fT$, where two edges of $\fT$ form an edge in the line graph if they share a vertex in $\fT$. It is not hard to check that this is a continuous graph as well, and a continuous $2\Delta-1$-coloring yields a solution of the proper edge coloring problem for $\mathcal{T}$.  
	\end{proof}
	
	Let $\fT$ be a continuous forest, $\Theta_\fT$ as above and $x\in V(\mathcal{T})$.
	Moreover, fix the canonical linear ordering on the set $(2\Delta-1)$ and some ordering of the vertices of $T_\Delta$.
	Write $I_x$ for the isomorphism between the connectivity component of $x$ in $\fT$ and $T_\Delta$ that is defined inductively along $k=\distance_\fT(x,{-})$ as follows:
	\begin{enumerate}
		\item $I_x$ sends $x$ to the root $\mathfrak{r}$ of $T_\Delta$,
		\item Suppose that $I_x$ is defined on $B_\fT(x,k)$ and $\distance_\fT(x,y)=k$ for some $y\in B_\fT(x,k)$.
		Consider the linear ordering on $S_y=\{z\in V(\mathcal{T}):\distance_\fT(x,z)=k+1, \ \distance_\fT(y,z)=1\}$ induced by $\Theta_\fT$, i.e., $z_0<z_1$ if $\Theta_\fT((y,e_0))<\Theta_\fT((y,e_1))$, where $e_i=\{y,z_i\}$ for $i<2$.
		Extend $I_x$ to each $S_y$ in such a way that for all $z_0,z_1 \in S_y$ we have $z_0 <z_1 \iff I(z_0)<_{T_\Delta} I(z_1)$, where $<_{T_\Delta}$ is the linear ordering fixed on $V(T_\Delta)$. 
	\end{enumerate}

	Similarly, we define the action of $\AutT$ on the connectivity component of $x$ in $\fT$ from the perspective of $x$: 
	\begin{definition}[$\odot_x$]
		Define $g\odot_x y=I^{-1}_x(g\cdot I_x(y))$, where $x,y\in V(\mathcal{T})$, $g\in \AutT$ and $y$ is in the connectivity component of $x$.
	\end{definition}

	\begin{proposition}\label{pr:InducedMap}
		Let $\fT$ be a continuous forest and $\psi:V(\mathcal{T})\to \{0,1\}$ be a continuous function.
		Then the map $\widetilde{h}_\psi:V(\mathcal{T})\to \{0,1\}^{T_\Delta}$ defined as
		\[\widetilde{h}_\psi(x)(v)= \psi(I^{-1}_x(v))\]
		is continuous.
		Moreover, the induced map $h_\psi:V(\mathcal{T})\to \{0,1\}^{T_\Delta}/\sim$ is a continuous homomorphism from $\fT$ to $\fG_{\{0,1\},\Delta}$.
	\end{proposition}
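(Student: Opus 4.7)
The plan is to establish continuity of $\widetilde{h}_\psi$ coordinatewise with respect to the product topology on $\{0,1\}^{T_\Delta}$, and then verify the homomorphism property of the induced map $h_\psi$ by producing an explicit automorphism of $T_\Delta$ witnessing the edge relation. To check continuity of $\widetilde{h}_\psi$ into $\{0,1\}^{T_\Delta}$ with the product topology it suffices, for each fixed $v \in T_\Delta$, to show that $x \mapsto \widetilde{h}_\psi(x)(v) = \psi(I_x^{-1}(v))$ is locally constant on $Z$. Set $N = \distance(\mathfrak{r}, v)$. The key observation is that the inductive definition of $I_x$ allows one to locate $I_x^{-1}(v)$ by a fixed finite walk in $\fT$ starting at $x$: there is a sequence of $\Theta_\fT$-ranks, depending only on $v$ and the canonical orderings on $T_\Delta$, such that following the corresponding neighbors step by step from $x$ yields exactly $I_x^{-1}(v)$. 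Since $\fT$ is a continuous tree, applying the clopenness of the neighborhood operator $N_\fT(\cdot)$ inductively shows that the rooted isomorphism type of $B_\fT(x, N)$ is locally constant in $x$; continuity of $\Theta_\fT$ ensures the same for the half-edge coloring on this ball. Consequently, on some clopen neighborhood $U$ of any point $x_0 \in Z$, the walk prescription realizes as a continuous selector $U \to Z$ whose value at $x$ equals $I_x^{-1}(v)$, and composing with $\psi$ gives the desired local constancy.

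Continuity of $h_\psi$ then follows by composing $\widetilde{h}_\psi$ with the continuous quotient projection $\{0,1\}^{T_\Delta} \to \{0,1\}^{T_\Delta}/\!\sim$. For the homomorphism property, let $\{x, y\}$ be an edge of $\fT$. Both $I_x$ and $I_y$ are graph isomorphisms from the common connected component of $x$ and $y$ onto $T_\Delta$, so $g := I_y \circ I_x^{-1} \in \AutT$; moreover $g(\mathfrak{r}) = I_y(x)$ is a neighbor of $I_y(y) = \mathfrak{r}$ in $T_\Delta$. Since $g^{-1} = I_x \circ I_y^{-1}$, I would compute directly that for every $v \in T_\Delta$,
\[
(g \cdot \widetilde{h}_\psi(x))(v) = \widetilde{h}_\psi(x)(g^{-1} v) = \psi(I_x^{-1}(g^{-1} v)) = \psi(I_y^{-1}(v)) = \widetilde{h}_\psi(y)(v),
\]
so $g \cdot \widetilde{h}_\psi(x) = \widetilde{h}_\psi(y)$. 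To verify the edge relation in $\fG_{\{0,1\},\Delta}$ for \emph{every} representative of $[\widetilde{h}_\psi(x)]$, write an arbitrary such representative as $h \cdot \widetilde{h}_\psi(x)$ with $h \in \AutR$; then $g h^{-1} \in \AutT$ still maps $\mathfrak{r}$ to $g(\mathfrak{r})$ (since $h^{-1}$ fixes $\mathfrak{r}$) and satisfies $(g h^{-1}) \cdot (h \cdot \widetilde{h}_\psi(x)) = g \cdot \widetilde{h}_\psi(x) = \widetilde{h}_\psi(y) \in [\widetilde{h}_\psi(y)]$, as required.

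The step requiring the most care will be the formal verification, inside the continuity argument, that the edge-colored rooted ball around $x$ has locally constant isomorphism type. This is essentially standard for continuous graphs equipped with a continuous labeling, but it does require a short induction on $N$ using that $N_\fT$ preserves clopenness together with continuity of $\Theta_\fT$; once this is in hand, the remainder of the argument is a routine unraveling of the definitions of $I_x$, the shift action, and the graph $\fG_{\{0,1\},\Delta}$.
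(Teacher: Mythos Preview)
Your proof is correct and follows essentially the same approach as the paper, which argues that the values of $\Theta_\fT$ and $\psi$ on $B_\fT(x,\ell)$ determine $\widetilde{h}_\psi(x)$ on $B(\mathfrak{r},\ell)$ and then declares the moreover part to follow easily. You supply the details the paper omits, including the explicit automorphism $g = I_y \circ I_x^{-1}$ witnessing the edge relation and the verification for arbitrary representatives of the $\sim$-class.
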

	\begin{proof}
		It follows directly from the definitions that the map is well-defined.
		As $\Theta_\fT$ and $\psi$ are continuous, it follows that $\widetilde{h}_\psi$ is continuous as well.
		To see this, note that the values of $\Theta_\fT$ and $\psi$ on $B_{\fT}(x,\ell)$ fully determine $\widetilde{h}_\psi$ on $B_{\fT}(x,\ell)$.
		The continuity in the moreover part easily follows. Finally, let us check that $h_\psi$ is indeed a homomorphism. Let $(x,y) \in E(\mathcal{T})$. We have to construct an automorphism $g \in \AutT$ moving $\mathfrak{r}$ to one of its neighbors so that $g \cdot \tilde{h}_\psi(x)=\tilde{h}_\psi(y)$. Let $g(v)=I_y(I^{-1}_x(v))$, then $g \in \AutT$ as $I_x$ and $I_y$ are isomorphisms. It is also clear by the fact that $x$ and $y$ are adjacent that $g$ maps $\mathfrak{r}$ to one of its neighbors. Now,	    
		\[\tilde{h}_\psi(x)(g^{-1} \cdot v)=\psi(I^{-1}_x(I_x(I^{-1}_y(v)))= \psi(I^{-1}_y(v))=\tilde{h}_\psi(y)(v),\] which is what we wanted. 
	\end{proof}
	Let us remark that $h_\psi$ remains the same for any continuous assignment $x\mapsto I_x$, where $I_x$ is an enumeration of the connected component of $x$.

	\subsubsection{Key Lemma}
	
	The following technical lemma is the key tool, which allows us to iteratively define a homomorphism. Following the terminology from \cite{Bernshteyn2021local=cont}, recall that if $\mathcal{T}$ is a continuous forest and $s,M \in \mathbb{N}$, a set $C \subseteq V(\mathcal{T})$ is called \emph{s-syndetic} if for every $x \in V(\mathcal{T})$ there exists a $y \in C$ with $\distance_{\mathcal{T}}(x,y) \leq s$; a set $U\subseteq V(\mathcal{T})$ is called \emph{$M$-separated}, if $\distance_{\mathcal{T}}(x,y)>M$ for every distinct $x,y \in U$.
	
	\begin{lemma}
		\label{l:key}
		Let $\mathcal{T}$ be a continuous $\Delta$-regular forest. For every $n\in \mathbb{N}$ and $s\ge 2$ there are $K,M\in \mathbb{N}$ such that for every decomposition of $V(\mathcal{T})=C_0\sqcup C\sqcup U$ into clopen sets such that $C$ is $s$-syndetic and $U$ is $M$-separated, and every $\varphi_0:C_0\to \{0,1\}$ that is continuous, there is a continuous extension map $\varphi:C_0 \cup C\to \{0,1\}$ so that for every continuous map $\psi:V(\mathcal{T})\to \{0,1\}$ that extends $\varphi$ we have that $h_\psi(V(\mathcal{T}))\subseteq X_{K,n}/\sim$. 
	\end{lemma}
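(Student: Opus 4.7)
The plan is to construct $\varphi: C_0 \cup C \to \{0,1\}$ by applying Bernshteyn's continuous Lov\'asz Local Lemma (\cref{t:antonLLL}) to a CSP $\mathfrak B$ on the clopen space $C$ with range $\{0,1\}$. First I would unpack the target: using the definitions of $\widetilde h_\psi$, $h_\psi$, $X_{K,n}$, and $\fG_{\{0,1\},\Delta}$, the inclusion $h_\psi(Z) \subseteq X_{K,n}/\sim$ is equivalent to the tree-theoretic statement that for every $y \in Z$ and every automorphism $\tilde g$ of the connected component of $y$ in $\fT$ with $0 < \distance_\fT(y, \tilde g y) \leq n$, there exists a witness $w$ with $w, \tilde g w \in B_\fT(y, K)$ and $\psi(w) \neq \psi(\tilde g w)$. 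Since $\psi$ is an arbitrary continuous extension of $\bar\varphi := \varphi_0 \sqcup \varphi$, and $\psi$ is unconstrained on $U$, this forces the witness pair to lie inside $C_0 \cup C$. The continuous proper edge coloring $\Theta_\fT$ from \cref{pr:edgebreaksymm} gives a continuous choice of rooting-isomorphism $I_x$, so that the phrase ``$\tilde g$ acts on $B_\fT(y,K)$'' can be indexed by a finite list of local isomorphism types.

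Next I would set up $\mathfrak B$: one constraint $\mathscr B_{y, \tilde g}$ per pair $(y, \tilde g)$ as above (up to its action on a suitably large ball around $y$), whose bad event is ``$\bar\varphi$ fails to break $\tilde g$ at $y$ via any witness in $C_0 \cup C$''. Continuity of $\mathfrak B$ follows from continuity of $\Theta_\fT$, of $\varphi_0$, and of the tree structure of $\fT$. The combinatorial estimates use both hypotheses: since $C$ is $s$-syndetic, $|B_\fT(y,K) \cap C|$ grows exponentially in $K$ once $K \gg s$, so that $B_\fT(y,K) \cap \tilde g^{-1}(B_\fT(y,K))$ contains exponentially many candidate witness pairs with both coordinates in $C$; since $U$ is $M$-separated and $M$ is chosen much larger than $K$, $|B_\fT(y,K) \cap U| \leq 1$, so at most $O(1)$ witness candidates are destroyed by $U$. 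Hence each bad event has many independent usable witness pairs, making the probability $p(\mathscr B_{y, \tilde g})$ exponentially small in the witness count, while $\vdeg(\mathfrak B)$ and $\ord(\mathfrak B)$ are bounded using the local finiteness of $\fT$ and the finiteness of isomorphism types of $K$-balls.

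Finally, after choosing $K$ sufficiently large and $M$ sufficiently larger, the LLL condition $p(\mathfrak B) \cdot \vdeg(\mathfrak B)^{\ord(\mathfrak B)} < 1$ is to be verified, possibly after decomposing each $\mathscr B_{y, \tilde g}$ into smaller constraints indexed by bounded-size subfamilies of witness pairs (following the template of \cite{Bernshteyn2021local=cont}), so that the order of each constraint stays moderate compared to $|\log p|$. Invoking \cref{t:antonLLL} then yields a continuous $\varphi: C \to \{0,1\}$ solving $\mathfrak B$; by the translation of the first paragraph, $\bar\varphi = \varphi_0 \sqcup \varphi$ has the property that every continuous extension $\psi: Z \to \{0,1\}$ satisfies $h_\psi(Z) \subseteq X_{K,n}/\sim$. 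The main obstacle I anticipate is the delicate balance of $p$, $\vdeg$, and $\ord$ against the strict LLL inequality: unlike the Cayley graph setting of \cite{Bernshteyn2021local=cont}, where the shift action makes the list of bad-event types uniform across vertices, here $\AutT$ acts much more richly on the $K$-balls of $\fT$, so the combinatorics of admissible $\tilde g$ has to be controlled carefully using only the continuous edge coloring $\Theta_\fT$ rather than any a priori edge-colored structure on $\fT$.
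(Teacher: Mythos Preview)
Your high-level plan---use Bernshteyn's continuous LLL to define $\varphi$ on $C$, and read the target condition $h_\psi(Z)\subseteq X_{K,n}/\sim$ as ``every short-displacement automorphism is witnessed inside a $K$-ball''---is correct and matches the paper. The gap is in the CSP design and the LLL arithmetic.

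With your CSP on $C$ and range $\{0,1\}$, the domain of each constraint $\mathscr B_{y,\tilde g}$ is all of $C\cap B_\fT(y,K)$, so $\ord(\mathfrak B)$ is of order $(\Delta-1)^{K}$. Even if you could get $p(\mathfrak B)\le 2^{-c(\Delta-1)^K}$ (which your ``many independent witness pairs'' does not establish: the pairs $(w,\tilde g w)$ share vertices along $\tilde g$-orbits, so you would still have to extract a linear-in-$|C\cap B|$ family of pairwise disjoint pairs, and this is exactly where the lack of an a priori edge $\Delta$-coloring bites), the inequality $p\cdot\vdeg^{\ord}<1$ asks for $\log(1/p)\gtrsim \ord\cdot\log(\vdeg)$, i.e.\ roughly $(\Delta-1)^K\gtrsim (\Delta-1)^K\cdot K\log\Delta$, which fails for any $K\ge 1$. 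Your suggested ``decomposition into smaller constraints'' does not help here: the bad event is a conjunction (all witnesses fail), and you need at least one witness to succeed, so splitting it into sub-constraints either keeps the same large domain or changes the logic in the wrong direction.

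The paper's resolution is to change the CSP entirely. It first fixes a maximal $L$-separated clopen set $\fZ\subseteq Z$ and takes the CSP on $\fZ$ with range $k=2^{|B_\fT(\cdot,L/4)|}$: a single color at $z\in\fZ$ encodes the full $\{0,1\}$-labeling of the ball $B_z=B_\fT(z,L/4)$. For each $(x,g)$ one locates a specific $z=z_{x,g}\in\fZ$ near $x$ with $\distance_\fT(z,g\odot_x z)>L$; the corresponding constraint $\fB_{z,r}$ (where $r=g\odot_x z$) has domain of size at most $2$, since the balls $B_{z'}$ are pairwise disjoint and $B_r$ can meet at most one of them besides $B_z$. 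Thus $\ord(\mathfrak B)\le 2$ and $\vdeg(\mathfrak B)\le \Delta^{O(L)}$. The probability bound is not via witness pairs but via \emph{sphere counts}: a bad $b$ forces the labeled balls $B_z$ and $B_r$ to be isomorphic, hence for each radius $i\le L/4$ the number of $0$'s on $S_\fT(z,i)$ must match that on $S_\fT(r,i)$; since $C$ is $s$-syndetic there are $\Theta(L/s)$ radii with $|C\cap S_\fT(z,i)|\ge (\Delta-1)^{\Theta(L)}$, and a Stirling estimate gives probability $\lesssim (\Delta-1)^{-\Theta(L)}$ per radius, independently across radii. This yields $p(\mathfrak B)\le (\Delta-1)^{-\Theta(L^2/s)}$, and the quadratic exponent in $L$ beats the linear exponent in $\vdeg^2$, so the LLL inequality holds for large $L$. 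Finally one takes $M=L$ and $K=6L+n$ (note $M<K$, opposite to your choice; the $M$-separation of $U$ is only used on the small balls $B_z$, where it guarantees $|U\cap B_z|\le 1$).
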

	\begin{proof}
		Let $L\in \mathbb{N}$ be divisible by $48 \cdot 8s$ such that
		$$\left(\frac{\sqrt{2s+1}}{(\Delta-1)^{L/16}}\right)^{L/{24s}} \cdot (\Delta^{17L+2n})^2<1,$$
		such an $L$ exists by $\Delta \ge 3$. 
		We show that that taking $M=L$ and $K=6L+n$ satisfies the conclusion of the lemma.
		
		We start by taking a maximal $L$-separated clopen subset $\fZ$ in $\fT$: 
	 	such a set exists, since the $L$-power graph of $\fT$ is a continuous graph as well, and then we can apply \cite[Lemma~2.2]{Bernshteyn2021local=cont}.

		\begin{claim}\label{cl1}
			Let $x\in V(\mathcal{T})$ and $g \in \AutT$ be such that $0<\distance_\fT(g\odot_x x, x) \leq n$.
			Then there is a $z_{x,g}\in \fZ$ that satisfies the following:
			\begin{itemize}
				\item $\distance_\fT(x,z_{x,g})<4L$,
				\item $\distance_\fT(g\odot_x z_{x,g},z_{x,g})>L$.
			\end{itemize}
			Moreover, we may assume that the assignment $(x,g)\mapsto z_{x,g}$, that is defined on a clopen subset of $V(\mathcal{T})\times \AutT$, is continuous.
		\end{claim}
		Note that the proof below relies heavily on the fact that we are working with forests. The important observation is that a non-trivial automorphism of $T_\Delta$, for $\Delta \ge 3$, must move points arbitrarily far from themselves. 
		\begin{proof}
			As $g\not=\id_{T_\Delta}$, there exists an $x'\in V(\mathcal{T})$ with $\distance_\fT(x,x')=2L$ and $\distance_\fT(x',g \odot_x x')=4L+\distance_\fT(g\odot_x x, x)$: indeed, any $x'\in V(\mathcal{T})$ with $\distance_\fT(x,x')=2L$ and the property that the shortest path between $g \odot_x x'$ and $x'$ contains $x$ and $g \odot_x x$ is suitable.
			To find such a point $x'$, we use that $\Delta>2$.
			Take $x_0,x_1,x_2$ in distance $2L$ from $x$ such that the shortest paths from $x_i$ to $x$ are pairwise edge disjoint. Then there is some $i<3$ such that the shortest path between $x_i$ and $x$ and between $g \odot_x x_i$ and $g \odot_x x$ are both edge disjoint from the one between $x$ and $g \odot_x x$. Letting $x'=x_i$ works, since the shortest path between $x_i$ and $g \odot_x x_i$ goes through $x$ and $g \odot_x x$. Now, by the maximality of $\mathcal{Z}$ we can pick a point $z_{x,g}\in \fZ$ with $\distance_\fT(x',z_{x,g}) \leq L$, which is clearly suitable.
			
			To see the additional part, first note that $(x,g)\mapsto z_{x,g}$ is defined on $V(\mathcal{G})\times \bigcup_{g\in I}g\cdot \AutR$, where $I\subseteq \AutT$ is some finite set, since $\mathcal{B}_\fT(x,n)$ is finite.
			The argument above depends on a finite neighborhood of $x$ in $\fT$, namely, the neighborhood of radius $6L+n$.
			Similarly, we only need to know the 'action' $\odot_{{-}}$, and consequently the proper edge coloring $\Theta_\fT$ from \cref{pr:edgebreaksymm}, in a finite radius around $x$ in $\fT$.
			As all the involved objects are either continuous or clopen, it follows that the assignment  can be chosen to be continuous as required.
		\end{proof}
		From now on, we fix a continuous assignment $(x,g)\mapsto z_{x,g}$ with the properties specified in \cref{cl1}. 
		
		\begin{claim}
			\label{cl:numberofchoices}
			Let $z\in \fZ$ and define the set
			$$C_z=\{r\in V(\mathcal{T}):\exists g\in \AutT \ \exists x\in V(\mathcal{T}) \ z=z_{x,g} \ \text{ and } \ g\odot_x z=r\}.$$
			Then we have $|C_z|\le 2\Delta^{8L+n}$.
		\end{claim}
		\begin{proof}
			Let $r \in C_z$ and $x,g$ be from the definition of $C_z$.
			Then we have
			\begin{equation}\label{eq:numberfchoices}
			\begin{split}
			\distance_\fT(z,r)= \distance_\fT(z_{x,g},g\odot_x z_{x,g})\leq  & \ \distance_\fT(z_{x,g},x)+\distance_\fT(x,g \odot_x x)+\distance_\fT(g \odot_x x,g \odot_x z_{x,g}) \\
			\leq & \ 4L+n+4L
			\end{split}
			\end{equation}
			by \cref{cl1}.
			We are done by using the upper bound $|B_\fT(z,8L+n)|\le 2\Delta^{8L+n}$.
		\end{proof}

		In order to utilize \cref{t:antonLLL}, we define a CSP that is suitable in our situation and yields the desired extension $\varphi$.
		For $z \in V(\mathcal{T})$, let $B_z=B_{\mathcal{T}}(z,L/4)$, note that $B_z \cap B_{z'}=\emptyset$ for every $z \neq z' \in \fZ$. We apply the \cref{t:antonLLL} with $X=\mathcal{Z}$ and $k=2^{|B_z|}$ for some, or equivalently any, $z\in \fZ$.
		We use the proper edge coloring $\Theta_\fT$ from \cref{pr:edgebreaksymm} to identify $2^{B_z}$ with $k$ for each $z \in \fZ$ in such a way that every continuous function $f:\fZ \to k$ naturally encodes a continuous $\{0,1\}$-labelling of $\bigcup_{z\in \fZ} B_z$.

		Define $\varphi'$ on the set $C_0 \cup (C \setminus \bigcup_{z \in \fZ} B_z)$ to be $\varphi_0$ on $C_0$ and $0$ otherwise.
		Given a continuous map $f:\fZ\to k$, we define $\varphi_f:C_0\cup C\to \{0,1\}$ to be the union of $\varphi'$ and the $\{0,1\}$-labelling encoded by $f$ restricted to $C\cap  \bigcup_{z \in \fZ} B_z$.
		It is routine to verify that $\varphi'$ is a continuous map defined on a clopen subset of $V(\mathcal{T})$.
		Similarly, $\varphi_f$ is continuous whenever $f$ is continuous.
		Next, we use \cref{t:antonLLL} to find a suitable $f$ so that $\varphi:=\varphi_f$ has all the desired properties.
		
		We define our CSP $\mathfrak{B}$.
		Given $z\in \fZ$ and $r\in C_z$, we let the constraint $\mathscr{B}_{z,r}\in \mathfrak{B}$ with domain
		$$\dom(\mathscr{B}_{z,r})=\left\{z'\in \fZ:B_{z'}\cap \left(B_z\cup B_r\right)\neq \emptyset\right\}$$
		as follows:
		\begin{itemize}
			\item $b:\dom(\mathscr{B}_{z,r})\to k$ is in $\mathscr{B}_{z,r}$ if and only if there is a $g\in \AutT$ such that
			\begin{itemize}
				\item $g\odot_z z=r$,
				\item there is a labeling $\phi$ of $B_z\cup B_r$, which agrees with the labeling defined on $(B_z\cup B_r)\setminus U$ by $\varphi'$ and the $\{0,1\}$-labeling encoded by $b$ restricted to $C\cap (B_z\cup B_r)$,   such that for every $y\in B_z$ we have $\phi(y)=\phi(g\odot_z y)$.
			\end{itemize} 
		\end{itemize}
		It is not hard to see that $\mathfrak{B}$ is a continuous CSP.
		Let us argue first that it is sufficient to solve $\mathfrak{B}$ to show the lemma.
		
		\begin{claim}\label{cl:enough}
			Assume that $f:\fZ \to k$ is a continuous solution to $\mathfrak{B}$.
			Then $\varphi_f$ satisfies the conclusion of \cref{l:key}. 
		\end{claim}
		\begin{proof}
			Let $\psi$ be a continuous extension of $\varphi_f$ to $V(\mathcal{T})$ and $x\in V(\mathcal{T})$.
			We show that $\widetilde{h}_\psi(x) \in X_{K,n}$.
			If not, then there are  $v\in T_\Delta$ and $g\in \AutT$ such that $(\widetilde{h}_\psi(x),K)$ does not $n$-break $g$ at $v$.
			By the definition, this means that $0<\distance(g\cdot v,v)\le n$ and there is an $\alpha\in \{0,1\}^{T_\Delta}$ such that $g\cdot \alpha=\alpha$ and  $\widetilde{h}_\psi(x) \restriction B(v,K)=\alpha \restriction B(v,K)$.
			
			We may assume that $v=\mathfrak{r}$, otherwise we replace $x$ with $I^{-1}_x(v)$.
			Let $z_{x,g}$ be from \cref{cl1} and set $r=g\odot_x z_{x,g}$.
			Recall that $K=6L+n$. Then we have 
			\begin{equation}\label{eq1}
			B_r\cup \bigcup_{z'\in \dom(\mathscr{B}_{z,r})}B_{z'}\subseteq B_\fT(x,6L+n)
			\end{equation}
			by \cref{cl1}.
			
			Set $b=f\upharpoonright \dom(\mathscr{B}_{z,r})$.
			We show that $b\in \mathscr{B}_{z,r}$ contradicting the assumption on $f$.
			For $u\in T_\Delta$ define $g_0 \in \AutT$ by 
			$$g_0\cdot u=I_z\circ I^{-1}_x (g\cdot (I_x \circ I^{-1}_z)(u))=I_z(g\odot_x (I^{-1}_z(u))).$$
			We have,
			\begin{itemize}
				\item by the definition of $g_0$, that $g_0\odot_z z=I^{-1}_z(g_0 \cdot I_z(z))=g\odot_x z=r$,
				\item by $g\cdot \alpha=\alpha$ and the definition of $g_0$, that
				\[		        \begin{split}
				\alpha(I_x(y))= & \ \alpha(g\cdot I_x(y))=\alpha(g\cdot I_x(I_z^{-1}(I_z(y)))) \\
				= & \ \alpha(I_x\circ I_z^{-1}(g_0\cdot I_z(y))) \\
				= & \ \alpha(I_x(g_0\odot_z y))
				\end{split}
				\]
				for $y\in B_z$,
				\item by \eqref{eq1}, that $\phi=\alpha\circ I_x$ agrees with the labelling defined on $(B_z\cup B_r)\setminus U$ by $\varphi'$ and the $\{0,1\}$-labeling encoded by $b$ restricted to $C\cap (B_z\cup B_r)$. 
			\end{itemize}
			Consequently, $b\in \mathscr{B}_{z,r}$ and we are done.
		\end{proof}
		
		\begin{claim}
			\label{cl:ordvdeg}
			We have $ord(\mathfrak{B})\le 2$ and $vdeg(\mathfrak{B}) \leq \Delta^{17L+2n}$.
		\end{claim}
		\begin{proof}
			Let $z\in \fZ$ and $r\in C_z$.
			Clearly, $z\in \dom(\mathscr{B}_{z,r})$.
			Suppose that $z_0,z_1\in \dom(\mathscr{B}_{z,r})\setminus \{z\}$.
			By the definition of $\fZ$, we have $B_{z_i}\cap B_z=\emptyset$, and therefore $B_{z_i}\cap B_r\not=\emptyset$ for $i\in\{0,1\}$.
			This implies that $\distance_\fT(z_0,z_1)<L$.
			Consequently, $z_0=z_1$ as $\dom(\mathscr{B}_{z,r})\subseteq \fZ$.
			
			Let $z\in \fZ$.
			There are at most $2\Delta^{8L+n}$ constraints of the form $\mathscr{B}_{z,r}$ by \cref{cl:numberofchoices}.
			On the other hand, if $z\in \dom(\mathscr{B}_{z',r'})$ for some $z\not=z'\in \fZ$ and $r'\in C_{z'}$, then
			$$\distance_\fT(z,z')\leq \distance_{\mathcal{T}}(z,r)+\distance_{\mathcal{T}}(r,z') \leq 8L+n+L/2$$
			by \eqref{eq:numberfchoices}.
			This shows that there are at most $2\Delta^{8L+n+L/2}$ choices for $z'\in \fZ$, each having $2\Delta^{8L+n}$ choices for $r'\in C_{z'}$.
			Altogether we have $vdeg(\mathfrak{B}) \leq \Delta^{17L+2n}$ as desired.
		\end{proof}	
		
		It remains to estimate $p(\mathfrak{B})$.
		Let $z\in \fZ$ and $r\in C_z$.
		Set $\mathbb{P}(\mathscr{B}_{z,r})=\frac{|\mathscr{B}_{z,r}|}{k^{|\dom(\mathscr{B}_{z,r})|}}$, i.e., the probability that $b\in k^{\dom(\mathscr{B}_{z,r})}$ chosen uniformly at random falls into $\mathscr{B}_{z,r}$.
		With this notation we have
		$$p(\mathfrak{B})=\sup_{z,r} \mathbb{P}(\mathscr{B}_{z,r}).$$
		
		We start with a simple observation about the $s$-syndetic set $C$.
		For $x\in V(\mathcal{T})$ and $i\in \mathbb{N}$, we define $S_\fT(x,i)=\{y\in V(\mathcal{T}):\distance_\fT(x,y)=i\}$.
		We show that there are many indices $i\le L/4$ such that $S_\fT(z,i)\cap C$ is large, where $z\in \fZ$.
		This allows to bound the probability of $\mathscr{B}_{z,r}$ as follows: if $b\in \mathscr{B}_{z,r}$, then it encodes isomorphic labelings of $B_z$ and $B_r$, in particular, this means that the number of $0$s and $1$s on $S_\fT(z,i)$ and $S_\fT(r,i)$ have to agree for every $i\le L/4$.
		We use Stirling's formula to bound the probability on each $S_\fT({-},i)$, and then the independence of these events for different indices, see \cref{l:cnrz}.
		
		\begin{claim}
			\label{cl:inC}  
			Let $x\in V(\mathcal{T})$ and $N\in \mathbb{N}$ be a multiple of $48s$.
			Then we have
			\[\left|\left\{i \leq  2N:|C\cap S_{\mathcal{T}}(x,i)| > \frac{(\Delta-1)^{N}}{2s+1}\right\}\right| \geq \frac{N}{3s}+2.\] 
		\end{claim}
		\begin{proof}

			Let $A_{\mathcal{T}}(x,j,\ell)=\bigcup_{0 \leq i  < \ell} S_{\mathcal{T}}(x,j+i)$, where $j>0$.
			Note that each connected component of the graph $\mathcal{T}$ restricted to $A_{\mathcal{T}}(x,j,2s+1)$ contains a ball of radius $s$ around some point. There are $\Delta(\Delta-1)^{j-1}$ such connected components, hence, $|A_{\mathcal{T}}(x,j,2s+1) \cap C| \geq \Delta(\Delta-1)^{j-1}$.
			In particular, there is some $i \in [j,j+2s+1)$ with $|S_{\mathcal{T}}(x,i) \cap C| > \Delta(\Delta-1)^{j-1}/(2s+1)$.
			Observe that the set $A_{\mathcal{T}}(x,N,N)$ contains at least $\left\lfloor\frac{N}{2s+1}\right\rfloor$ disjoint sets of the form $A_{\mathcal{T}}(x,j,2s)$ with $j \geq N$, which yields the claim as $\left\lfloor\frac{N}{2s+1}\right\rfloor \geq \frac{N}{3s}+2$, which follows from the assumptions that $s\ge 2$ and $N$ is divisible by $48s$.
		\end{proof}
		
		\begin{lemma}
			\label{l:cnrz}
			Let $z\in \fZ$ and $r\in C_z$.
			Then $\mathbb{P}(\mathscr{B}_{z,r})\le \left(\frac{\sqrt{2s+1}}{(\Delta-1)^{L/16}}\right)^{L/{24s}}$.
			Consequently, $p(\mathfrak{B})\le \left(\frac{\sqrt{2s+1}}{(\Delta-1)^{L/16}}\right)^{L/{24s}}$.
		\end{lemma}
		\begin{proof}
			
			Apply \cref{cl:inC} to $N= L/8$ and $x=z$.
			Since $U$ is $M=L$-separated, there are at most two $i,j \leq L/4$ such that $(S_{\mathcal{T}}(r,i) \cup S_{\mathcal{T}}(z,j)) \cap U \neq \emptyset$.
			Hence we can fix a set $S$ of indices $i \leq L/4$ of size at least $L/{24s}$ such that $S_{\mathcal{T}}(r,i)$ and $S_{\mathcal{T}}(z,i)$ are disjoint from $U$ for every $i\in S$, and $S_{\mathcal{T}}(z,i)$ contains at least $\frac{(\Delta-1)^{L/8}}{2s+1}$ elements of $C$ for every $i\in S$.
			
			By \cref{cl:ordvdeg}, $|\dom(\mathscr{B}_{z,r})|\le 2$.
			Let $b:\dom(\mathscr{B}_{z,r})\to k$.
			Suppose that we know the value of $b$ on $\dom(\mathscr{B}_{z,r})\setminus \{z\}$ (this set can be empty).
			This, together with $\varphi'$, fully determines a $\{0,1\}$-labeling of $B_r\setminus U$.
			Intuitively, if $b\in \mathscr{B}_{z,r}$, then the value $b(z)$ needs to satisfy a lot of constraints.
			This is formalized in the next claim \eqref{eq:A}, that is clearly enough to finish the proof.
			
			\begin{equation}
			\tag{A}\label{eq:A}
			\parbox{\dimexpr\linewidth-4em}{%
				\strut
				\emph{Conditioned on any $\{0,1\}$-labeling $\alpha_0$ of $B_r$, the probability that the union of $\varphi'$ with a $\{0,1\}$-labeling of vertices in $B_z\cap C$ encoded by a random map $z\mapsto \gamma\in k$ can be extended to a $\{0,1\}$-labeling of $B_z$ isomorphic to $\alpha_0$ is smaller than $\left(\frac{ \sqrt{2s+1}}{(\Delta-1)^{L/16}}\right)^{L/{24s}}$.}
				\strut
			}
			\end{equation} 
			
			In order to show that, we let $R(z,i)$, for $i\in S$, be the random variable that counts the difference between the number of $0$s and $1$s in the labeling of $S_\fT(z,i)$ that is the restriction of the union of $\varphi'$ with a $\{0,1\}$-labeling of vertices in $B_z\cap C$ encoded by a random map $z\mapsto \gamma\in k$.
			Similarly, we define a deterministic function $R(r,i)$. Observe that $B_r$ and $B_z$ are disjoint, since $r \in C_z$, which gives  that the distance of $r$ and $z$ is $>L$, by \cref{cl1}. Hence, $R(z,i)$ does not depend on the fixed $\{0,1\}$-labeling $\alpha_0$. 
			
			\begin{claim}
				Let $i\in S$.
				Then $\mathbb{P}(R(z,i)=R(r,i))\le \frac{ \sqrt{2s+1}}{(\Delta-1)^{L/16}}$.
			\end{claim}
			\begin{proof}
			
					First, note that as $R(r,i)$ is fixed and $R(z,i)$ depends on $\varphi'$, it can happen that the condition $R(z,i)=R(r,i)$ cannot be satisfied at all.
					In this case the claim trivially holds.
					
					On the other hand, if $R(z,i)=R(r,i)$ can be satisfied, then the difference between $0$s and $1$s that we need to see in the restriction of the random $\{0,1\}$-labeling to the set $C \cap S_{\mathcal{T}}(z,i)$ is equal to a unique number $-t\le R\le t$, where $t=|C \cap S_{\mathcal{T}}(z,i)|$.
					The probability of this event is the same as the probability that the difference between the number of $0$s and $1$s in $t$ bits chosen uniformly at random is exactly $R$, which can be upper bounded by
					\begin{equation}\label{eq5}
					\frac{1}{2^{t}}\binom{t}{\frac{t-R}{2}}\le \frac{1}{2^{t}}\binom{t}{t/2}\leq \frac{1}{\sqrt{\pi t/2}} \leq \frac{1}{\sqrt{t}},
					\end{equation}
					for every $t/2\ge 1$ assuming that $t$ is even, using Stirling's or Wallis' formulas.
					
							By the choice of $i\in S$, we have $t \geq \frac{(\Delta-1)^{L/8}}{2s+1}$.
				Consequently,
				\[\mathbb{P}(R(z,i)=R(r,i))\le \frac{1}{\sqrt t} \leq ((\Delta-1)^{L/8}/(2s+1))^{-1/2}\leq \frac{\sqrt{2s+1}}{(\Delta-1)^{L/16}},\]
				as desired.
			\end{proof}
			
			Note that $R(z,i)$ and $R(z,i')$ are independent for every $i\not=i'$.
			Write $\fE$ for the event from \eqref{eq:A}.
			Then we have
			$$\mathbb{P}(\fE)\le \prod_{i\in S}\mathbb{P}(R(z,i)=R(r,i))\le \left(\frac{\sqrt{2s+1}}{(\Delta-1)^{L/16}}\right)^{L/{24s}}$$
			and the proof is finished.
		\end{proof}

		Combining \cref{l:cnrz} with \cref{cl:ordvdeg} we get 
		\[p(\mathfrak{B}) \cdot vdeg(\mathfrak{B})^{ord(\mathfrak{B})} \leq \left(\frac{\sqrt{2s+1}}{(\Delta-1)^{L/16}}\right)^{L/{24s}} \cdot (\Delta^{17L+2n})^2<1,\]
		by the choice of $L$.
		Therefore, \cref{t:antonLLL} applies and we obtain a continuous coloring $f:\mathcal{Z} \to k$, which avoids all $\mathscr{B} \in \mathfrak{B}$.
		By \cref{cl:enough} this produces the desired map $\varphi=\varphi_f$.
	\end{proof}

	\subsubsection{Proof of \cref{t:from01}}
	Using \cref{l:key} we will inductively define sequences of naturals $(K_i,M_i)_{i \in \mathbb{N}}$, clopen sets $C_n, U_n \subseteq V(\mathcal{T})$ and labelings $\varphi_n:C_n \to \{0,1\}$ such that
	
	\begin{itemize}
		\item $C_0\subseteq C_1 \subseteq C_2 \dots$ and $U_0 \supseteq U_1 \supseteq U_2 \dots$,
		\item $C_n \cup U_n=V(\mathcal{T})$,
		\item $\varphi_{n} \subseteq \varphi_{n+1}$,
		\item $U_n$ is a $(2M_n+1)$-syndetic and $M_n$-separated set,
		\item for every extension of $\varphi_n$ to a continuous map $\psi:V(\mathcal{T}) \to \{0,1\}$, $h_\psi(V(\mathcal{T})) \subset X_{K_{n},n}/\sim$.
	\end{itemize}

	Let $C_0=\emptyset$, $U_{0}=V(\mathcal{T})$, $M_{0},K_{0}=0$ and $\varphi_0=\emptyset$.
	Given  $C_{n},U_{n},M_{n},K_{n}$, apply \cref{l:key} to $s=4M_{n}+3$ and $n+1$ to obtain constants $K_{n+1},M_{n+1}\in \mathbb{N}$.
	It follows from the first paragraph in the proof of \cref{l:key} that we may assume that $K_{n+1},M_{n+1}\geq \max\{K_{n},8M_{n}+6\}$.
	Using \cite[Lemma~2.2]{Bernshteyn2021local=cont}, let $U_{n+1}$ be a maximal $M_{n+1}$-separated clopen subset of $U_n$, i.e., $U_{n+1}$ is $M_{n+1}$-separated and adding any point $x\in U_n\setminus U_{n+1}$ violates this property.
	
	Observe that $U_{n+1}$ is $(2M_{n+1}+1)$-syndetic:
	indeed, for every $x\in V(\mathcal{T})$ there is $z\in U_n$ such that $\distance(x,z)\le 2M_n+1$ by the inductive assumption and, by the maximality of $U_{n+1}$, there is $y\in U_{n+1}$ such that $\distance(z,y)\le M_{n+1}$.
	Altogether, we have that $\distance(x,y)\le 2M_n+1+M_{n+1}\le 2M_{n+1}+1$ by the choice of $M_{n+1}$.
	
	Moreover, we claim that $U_n \setminus U_{n+1}$ is $(4M_{n}+3)$-syndetic.
	In order to see this, take an arbitrary $x\in V(\mathcal{T})$ and $z,z'\in V(\mathcal{T})$ such that $\distance(x,z),\distance(x,z')=2M_n+2$ and $\distance(z,z')=4M_n+4$.
	This is possible as $\mathcal{T}$ is $\Delta$-regular and acyclic.
	As $U_n$ is $(2M_n+1)$-syndetic, there are $y\not=y'\in U_n$ such that $\distance(x,y),\distance(x,y')\le 4M_n+3$.
	Finally, we cannot have that both $y,y'\in U_{n+1}$ because $\distance(y,y')\le 8M_n+6\le M_{n+1}$.
	
	Let $C_{n+1}=C_n \cup (U_{n} \setminus U_{n+1})$, and apply \cref{l:key} to the decomposition $V(\mathcal{T}) =C_n \cup (U_{n} \setminus U_{n+1})\cup U_{n+1}$ and $\varphi_{n}$ to obtain an extension $\varphi_{n+1}$ to $C_{n+1}$.
	
	Let us point out that $\bigcup_n \varphi_n$ might not be continuous. Nevertheless, the properties of the above sequence turn out to be sufficient for our purposes. Let $X_n=X_{K_n,n}$ and for $N\in \mathbb{N}$ define 
	$$Y_{\le N}=\bigcap_{ n\le N} X_n/\sim,$$
	and 
	$$Y=\bigcap_{N\in \mathbb{N}} Y_{\le N}.$$
	It follows from the construction that $Y_{\le N}$ is nonempty for every $N\in \mathbb{N}$.
	This is because $h_\psi(V(\mathcal{T}))\subseteq X_{K_n,n}/\sim$ for any continuous extension of $\varphi_n$ to some $\psi$ for every $n\le N$.
	In particular, one can take $\psi$ to be any continuous extension of $\varphi_N$.
	As the sets $X_{K_n,n}/\sim$ are compact and $\AutT$-invariant by \cref{pr:CompactInv}, we have that $Y_{\le N}$ and consequently $Y$ is such.
	In particular, $Y\subseteq \freeT/\sim$ by \cref{pr:CompactInv} and, consequently, every element of $Y$ is hyperaperiodic.
	This proves \cref{t:hyperaperiodic}.

	Now we are ready to finish the proof of \cref{t:from01}.
	Let $f$ be a continuous $\Pi$-coloring of $\tzeroone$. Since $Y \subseteq \operatorname{Free}\left(\{0,1\}^{T_\Delta}/\sim\right)$ and $Y$ is compact, there is some $L\in \mathbb{N}$, so that for each half-edge $h$, the value of $f(h)$ is completely determined by $x \restriction B_{T_\Delta}(\mathfrak{r},L)$, where $h$ is incident to $[x] \in Y$.
	Define $\mathbb{B}_{L+1}$ to be the collection of all basic open sets that are determined by $\{0,1\}$-labeled neighborhoods $B_{T_\Delta}(\mathfrak{r},L+1)$ that appear in $Y$.
	In other words, $\mathbb{B}_{L+1}$ is an open cover of $Y$.
	By compactness and the fact that $Y=\bigcap_{N\in \mathbb{N}} Y_{\le N}$, there is $N\in \mathbb{N}$ such that $\mathbb{B}_{L+1}$ is an open cover of $Y_{\le N}$.
	Therefore, there is a continuous extension $f'$ of $f$ to
	$$H(\tzeroone\restriction Y_{\leq N})=H(\tzeroone) \cap \left(Y_{\le N} \times \binom{Y_{\le N}}{2}\right),$$
	using the constant value determined by $x \restriction B_{T_\Delta}(\mathfrak{r},L)$, that is a $\Pi$-coloring in the sense that it satisfies the vertex and edge constraints.
	
	Now, extend $\varphi_N$ to any continuous map $\psi:V(\mathcal{T}) \to \{0,1\}$.
	By \cref{pr:InducedMap} and the construction, we have that $h_\psi$ is a homomorphism and $h_\psi(V(\mathcal{T}))\subseteq X_{n}/\sim$ for every $n\le N$.
	In particular, we have that $h_\psi(V(\mathcal{T}))\subseteq Y_{\leq N}$ and as it is a homomorphism, it naturally induces a continuous map $\overrightarrow{h}_\psi$ from $H(\fT)$ to $H(\mathcal{G}_{\{0,1\},\Delta}\restriction Y_{\leq N})$.
	Define  $c=f' \circ \tilde{h}_\psi$. It is easy to see that $c$ is a continuous $\Pi$-coloring of $\mathcal{T}$.

	\section{$\baire = \local(O(\log n))$}
	\label{sec:baire}
	
	In the last two sections, we show that on $\Delta$-regular forests the classes $\baire$ and $\local(O(\log(n)))$ are the same.
	At first glance, this result looks rather counter-intuitive.
	This is because in finite $\Delta$-regular forests every vertex can see a leaf at distance $O(\log(n))$, while there are no leaves at all in an infinite $\Delta$-regular tree.
	However, there is an intuitive reason why these classes are the same: in both setups there is a technique to decompose an input graph into a hierarchy of subsets, see \cref{subsubsec:baire} and \cref{subsec:LOCAL}. Furthermore, the existence of a solution that is defined inductively with respect to these decompositions can be characterized by the same combinatorial condition of Bernshteyn.

	\paragraph{Combinatorial Condition -- $\ell$-full Set}
	In both structural decompositions we need to extend a partial coloring along paths that have their endpoints colored from the inductive step.
	The precise formulation of the combinatorial condition that captures this demand was extracted by Bernshteyn.
	He proved that it characterizes the class $\baire$ for Cayley graphs of virtually free groups.
	Note that this class contains, e.g., $\Delta$-regular forests with a proper edge $\Delta$-coloring.

	\begin{definition}
		A \emph{spiky $\Delta$-path} is a finite $\Delta$-regular tree $P$, so that $(V(P),E(P))$ is a path. 
	\end{definition}
	Observe that, up to isomorphism there is only one such path for each given length. 
	
	\begin{definition}[Combinatorial condition -- an $\ell$-full set]
		\label{def:ellfull}
		Let $\Pi = (\Sigma, \fV, \fE)$ be an LCL problem and $\ell\geq 2$.
		A set $\emptyset \neq \fV' \subseteq \fV$ is \emph{$\ell$-full} whenever the following is satisfied.
		Take a spiky $\Delta$-path $P$ with at least $\ell$ vertices.
		Take any $c_1, c_2 \in \fV'$ and label arbitrarily the half-edges around the endpoints with $c_1$ and $c_2$, respectively.
		Then there is a way to label $H(P)$ with configurations from $\fV'$ such that all the $\ell-1$ edges on the path have valid edge configuration on them. 
	\end{definition}

		\begin{remark}
			Let us remark that this condition is equivalent to the existence of $\emptyset\neq \Sigma'\subseteq \Sigma$ with the property that every $a\in \Sigma'$ is an element of a multiset from $\mathcal{V}$ that only uses elements from $\Sigma'$ and any spiky path of length at least $\ell$ with any choice of $c_1,c_2\in \Sigma'$ at the starting and end half-edges admits a valid labeling of $H(P)$ only using elements from $\Sigma'$. The equivalence can be seen by taking $\Sigma'=$ all labels used in $\mathcal{V}'$, and taking $\mathcal{V}'$ to be the subset of $\mathcal{V}$ that only uses elements from $\Sigma'$.  
		\end{remark}

	We will say that a spiky path $P$ \emph{starts} (resp. \emph{ends}) with a half edge $h$, if $h$ is incident to the first (last) vertex of the path and $h$ is not virtual. Note that by the definition of an LCL problem, whether a labeling with $c_1$ at the starting and $c_2$ in the ending point extends to a labeling of the whole path, depends only on the values on the start and end half-edge, and not on the values of the virtual half-edges of the starting and ending vertices. 
	
	Now we are ready to formulate the result that combines Bernshteyn's result (equivalence between (1) and (3)) with the main results of this section.
	
	\begin{theorem}\label{thm:MainBaireLog}
		Let $\Pi$ be an LCL problem on regular forests. 
		Then the following are equivalent:
		\begin{enumerate}
			\item \label{b:baire} $\Pi\in \baire$,
			\item \label{b:toast} $\Pi$ admits a $\toast$ algorithm,
			\item \label{b:ellfull} $\Pi$ admits an $\ell$-full set,
			\item \label{b:local} $\Pi\in \local(O(\log(n)))$.
		\end{enumerate}
	\end{theorem}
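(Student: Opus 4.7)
The plan is to close the four-way equivalence by establishing the cycle (3) $\Rightarrow$ (2) $\Rightarrow$ (1) $\Rightarrow$ (3), which recovers Bernshteyn's characterization of $\baire$ via the $\toast$ framework, together with the new equivalence (3) $\Leftrightarrow$ (4) linking the combinatorial condition to the distributed $O(\log n)$ regime.

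For the Bernshteyn block, the plan is as follows. For (3) $\Rightarrow$ (2), I would turn $\ell$-fullness into a $\toast$ extending function: pick $q$ much larger than $\ell$; when a new cell $D$ of a $q$-toast is processed, the previously colored part inside $D$ is a union of smaller sub-cells whose outer boundaries within $D$ are far apart, so the uncolored residual decomposes into subtrees with a bounded number of already-labeled boundary vertices (with configurations inductively maintained to lie in $\fV'$). I would run a local rake-and-compress on each such subtree and extend using $\ell$-fullness, exactly as below in (3) $\Rightarrow$ (4). Then (2) $\Rightarrow$ (1) follows immediately by composing the extending function with the Baire toast produced by \cref{pr:BaireToast}. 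For (1) $\Rightarrow$ (3), following Bernshteyn, I would apply any Baire $\Pi$-coloring $f$ to a suitable continuous tree built from a hyperaperiodic element (\cref{t:hyperaperiodic}) and let $\fV'$ consist of the configurations $f$ realizes on a non-meager set; a Kuratowski--Ulam argument, using the ability to perturb $f$ on two far-apart vertices independently, forces $\ell$-fullness of $\fV'$.

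For (3) $\Rightarrow$ (4), given an $\ell$-full set $\fV' \subseteq \fV$, I would invoke \cref{lem-rake-and-compress-modified} with $\ell' = \ell$ to compute in $O(\log n)$ rounds a decomposition $V(T) = \VR{1} \cup \VC{1} \cup \cdots \cup \VR{L}$ and then label layers from $\VR{L}$ inward. When labeling $v \in \VR{i}$, it has at most one already-labeled neighbor $u$ in a strictly deeper layer, so the edge $\{u,v\}$ imposes a label on a single half-edge of $v$; extending this to a configuration in $\fV'$ follows from $\ell$-fullness applied to an $\ell$-path with $v$ at one endpoint and any $c' \in \fV'$ at the other. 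When labeling a path $P_j = (v_1, \ldots, v_s) \subseteq \VC{i}$ of length $s \in [\ell, 2\ell]$, the outside neighbors $u, w$ of $v_1, v_s$ are already labeled; this imposes endpoint configurations in $\fV'$, and $\ell$-fullness directly fills the path. All decisions at layer $i$ are independent, so the post-processing fits within the $O(\log n)$ budget.

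The main obstacle is (4) $\Rightarrow$ (3). Given a deterministic algorithm $\fA$ of local complexity $T(n) = O(\log n)$ solving $\Pi$, the plan is to extract $\fV'$ by analyzing the behavior of $\fA$ on carefully constructed layered inputs, following and extending the Chang--Pettie machinery of~\cite{chang_pettie2019time_hierarchy_trees_rand_speedup}. Concretely, I would inductively construct a sequence of rooted, layered, partially labeled trees $(T_k, \lambda_k)$, where $\lambda_k$ records the partial output of $\fA$ obtained by considering all valid identifier assignments and outer completions of $T_k$ into an $n$-vertex tree; then $\fV'$ is the set of configurations that appear stably at the centers of sufficiently deep such $T_k$. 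The hard part is verifying the $\ell$-fullness clause: given $c_1, c_2 \in \fV'$ realized by trees $T_{k,1}, T_{k,2}$ with centers $v_1, v_2$, I would glue them along a path of length $\ell \gg 2T(n)$ to produce a larger tree $T^*$; since $\fA$'s output depends only on a radius-$T(n)$ neighborhood, the outputs at $v_1, v_2$ remain $c_1, c_2$ while the outputs along the connecting path provide a valid completion in $\fV$. The delicate engineering, beyond the standard Chang--Pettie setup, is to design the inductive family $(T_k, \lambda_k)$ so that the middle-of-the-path configurations themselves belong to $\fV'$, so that closure under $\ell$-extension is realized within $\fV'$ rather than escaping to $\fV \setminus \fV'$.
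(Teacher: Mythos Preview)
Your cycle structure matches the paper's, and the directions (3)$\Rightarrow$(4) and (2)$\Rightarrow$(1) are essentially the same. For (3)$\Rightarrow$(2) the paper uses a simpler direct traversal from a boundary vertex of the cell $E$ (orienting edges towards it and reserving a length-$\ell$ path at each interface with a sub-cell $D_i$) rather than running rake-and-compress inside each cell; your variant is plausible but more work. For (1)$\Rightarrow$(3), your invocation of hyperaperiodic elements is misplaced: those produce compact orbit closures and are the tool for $\cont$, not $\baire$. The paper instead applies the Baire hypothesis to the shift graph of the free product of $\Delta$ copies of $\mathbb{Z}_2$ acting on $\{0,1\}^\Gamma$, takes $\fV'$ to be the configurations realized on a non-meager set, uses continuity on a comeager set to get a uniform radius $t$ determining each configuration, and concludes $\ell$-fullness for $\ell > 2t+1$. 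Your Kuratowski--Ulam intuition is the right flavor, but the specific space matters.

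The genuine gap is in (4)$\Rightarrow$(3). You correctly name the obstacle---that the path-filling configurations must land in $\fV'$, not merely in $\fV$---but your final sentence only restates it. The paper's resolution needs substantial machinery entirely absent from your sketch: an equivalence relation $\simm$ on poled trees with a subtree-replacement lemma, a pumping lemma for bipolar trees, a gluing operator $Z_f$ parameterized by a labeling function $f$ that simulates $\fA$, a fixed point $W^\ast$ of $Z_f$ obtained via a two-dimensional array and pigeonhole, and a monotone chain $R_1 \subseteq R_2 \subseteq \cdots$ of layer-indexed configuration sets that is forced to stabilize. Only then is $\fV'$ extracted from $R_1$, and $\ell$-fullness is proved by explicit path constructions (\cref{lem:find_path_1,lem:find_path_2,lem:find_path_3,lem:find_path_4}) that navigate between layers. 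Your naive gluing also does not handle the ``arbitrary half-edge assignment'' clause of \cref{def:ellfull}: you must fill the path for \emph{every} choice of which half-edge of $v_1$ lies on it, not just the one your gluing picks; and once you detach a subtree of $T_{k,1}$ to make room for the path, $\fA$ may no longer output $c_1$ at $v_1$. Without the fixed-point and layer-stabilization apparatus there is no closure of $\fV'$ under path-filling, and the argument does not go through.
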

	
	We start by showing \eqref{b:ellfull} implies \eqref{b:toast} (\cref{pr:toastable}) and \eqref{b:ellfull} implies \eqref{b:local} (\cref{thm:ellfull_to_logn}).
	Then we demonstrate that \eqref{b:baire} implies \eqref{b:ellfull} ((\cref{thm:baireellfull})) and, the most challenging, \eqref{b:local} implies \eqref{b:ellfull} (\cref{thm:logn_to_ellfull}).
	The remaining part \eqref{b:toast} implies \eqref{b:baire} is a consequence of \cref{pr:BaireToast} and the definition of a $\toast$ algorithm.
	
	\begin{remark*}
		Bernshteyn's result described in the introduction about the class $\baire$ was again stated for graphs isomorphic to Cayley graphs of some countable groups, e.g., free groups or free products of several copies of $\mathbb{Z}_2$ with the standard generating set etc.
		Unlike in the $O(\log^* n)$ regime, we note that in this case these ``Cayley graph structure'' itself satisfies the $\ell$-full condition.
		Consequently, there is no difference between investigating problems on $\Delta$-regular forests and, e.g., $\Delta$-regular forests with a proper edge $\Delta$-coloring.
	\end{remark*}
	
	Next we discuss the proof of \cref{thm:MainBaireLog}.
	We include the proof of Bernshteyn's result \cite{Bernshteyn_work_in_progress} for completeness.

	\subsection{Sufficiency}
	
	We start by showing that the combinatorial condition is sufficient for $\baire$ and $\local(O(\log(n)))$.
	Namely, it follows from the next results together with \cref{pr:BaireToast} that (3.) implies all the other conditions in \cref{thm:MainBaireLog}.
	As discussed above the main idea is to color inductively along the decompositions.
	
	\begin{proposition}\label{pr:toastable}
		Let $\Pi=(\Sigma, \fV, \fE)$ be an LCL problem that admits an $\ell$-full set $\fV'\subseteq \fV$ for some $\ell>0$.
		Then $\Pi$ admits a $\toast$ algorithm that produces a $\Pi$-coloring for every $(2\ell+2)$-ctoast $\fD$.
	\end{proposition}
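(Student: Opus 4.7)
The goal is to construct a partial extending function $\fA$ such that, when applied inductively along any $(2\ell+2)$-toast $\fD$, it produces a valid $\Pi$-coloring of the whole tree. I will enumerate $\fD = \{D_1, D_2, \dots\}$ in a suitable order (e.g., by increasing size, breaking ties arbitrarily) and maintain as invariant that after processing $D_1, \dots, D_m$, every vertex already processed receives a configuration from the $\ell$-full subset $\fV' \subseteq \fV$, and the partial labeling is a valid $\Pi$-coloring on its domain.

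\textbf{Step 1: the extension rule for one piece.} Suppose we are processing $D \in \fD$, and let $A \subseteq V(D)$ denote the already-colored vertices (those lying in previously processed toast pieces) and $U = V(D) \setminus A$ the uncolored ones. In a tree, $U$ splits into connected components; each component $C$ is itself a subtree, with some set $\partial C \subseteq A$ of ``attachment'' vertices in $A$ adjacent to it. The crucial geometric fact, forced by the $(2\ell+2)$-separation of boundaries, is that within any such component $C$ the shortest path between any two attachment points of $\partial C$, as well as between any attachment point and the boundary of $D$, passes through at least $2\ell+2$ uncolored vertices in $U$. This gives us enough room to invoke the $\ell$-full condition.

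\textbf{Step 2: extending within one component.} For each component $C$, I first reduce to paths via the tree structure. Root $C$ at an arbitrary vertex, and process it in layers from the attachment points inward. Along any path from one attachment point $a_1$ to another $a_2$, the interior has $\geq 2\ell+2$ vertices, so it can be split into a central subpath of length $\geq \ell$ plus buffers. The $\ell$-full property of $\fV'$ then applies: given the already-fixed $\fV'$-configurations at the two endpoints of the central subpath (inherited or freshly chosen from $\fV'$ on the buffer vertices), we can fill in the interior with $\fV'$-configurations that respect the edge constraints. For ``free'' branches of $C$ that end at leaves of $D$ or at $\partial D$ rather than at a second attachment point, the same property (with only one endpoint constraint, handled by first extending one $\fV'$-configuration arbitrarily and then applying $\ell$-fullness) lets us continue indefinitely. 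If $A \cap V(D) = \emptyset$, we simply start by picking any $c_1 \in \fV'$ at some vertex and proceed as above.

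\textbf{Step 3: verifying correctness inductively.} The construction preserves the invariant because every newly assigned configuration lies in $\fV'$ and every edge between previously and newly colored vertices has an $\fE$-valid configuration by the $\ell$-full property. At the endpoints of the inductive process, condition (i) of a toast (every pair of vertices lies in some $D \in \fD$) guarantees that every vertex is eventually colored and every edge is eventually covered, yielding a global $\Pi$-coloring.

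\textbf{Main obstacle.} The only real subtlety is bookkeeping the branching structure of the components of $U$: in a tree the component $C$ can have many attachment points in $\partial C$, not just two, so one must take care to orient the extension so that each application of the $\ell$-fullness condition sees the required ``path of length $\geq \ell$ with labeled endpoints'' setup. This is handled by processing $C$ as a tree rooted at one distinguished attachment point, propagating fixed $\fV'$-labels outward vertex-by-vertex, and invoking $\ell$-fullness only at the moment when two already-labeled regions must be connected across a long enough sub-path — which the $(2\ell+2)$-separation guarantees is always available.
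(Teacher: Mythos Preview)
Your proposal is correct and takes essentially the same approach as the paper: within each uncolored component of a toast piece, greedily extend a $\fV'$-coloring from a chosen root vertex, reserving a length-$\ell$ segment near each already-colored inner piece so that the $\ell$-full property can bridge the final gap; the $(2\ell+2)$-separation of boundaries guarantees these reserved segments are pairwise disjoint. The only cosmetic difference is the choice of root---the paper roots each component at a boundary vertex of the ambient piece $E$ (which exists because $\Delta>2$ and is automatically far from every inner $D_i$), whereas you root at an attachment point.
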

	\begin{proof}
		Our aim is to build a partial extending function.
		Set $q:=2\ell+2$.
		Let $E$ be a piece in a $q$-ctoast $\fD$ and suppose that $D_1,\dots,D_k\in \fD$ are subsets of $E$ such that their boundaries are $q$-separated.
		Suppose, moreover, that we have defined inductively a coloring of half-edges of vertices in $D=\bigcup D_i$ using only vertex configurations from $\fV'$ such that every edge configuration $\fE$ is satisfied for every edge in $D$.
		
		We handle each connected component of $E\setminus D$ separately.
		Let $A$ be one of them.
		Let $u\in A$ be a boundary vertex of $E$.
		Such a vertex exists since every vertex in $E$ has degree $>2$, which follows from the assumption that the pieces of the toast are connected.
		The distance of $u$ and any $D_i$ is at least $2\ell+2$ for every $i\in [k]$.
		We orient all the edges from $A$ towards $u$.
		Moreover if $v_i\in A$ is a boundary vertex of some $D_i$ we assign to $v_i$ a path $V_i$ of length $\ell$ towards $u$.
		Note that $V_i$ and $V_j$ have distance at least $1$, in particular, are disjoint for $i\not=j\in [k]$ .
		Now, until we encounter some path $V_i$, color in any manner half-edges of vertices in $A$ inductively starting at $u$ in such a way that edge configurations $\fE$ are satisfied on every edge and only vertex configurations from $\fV'$ are used, this is possible, as $\mathcal{V}' \neq \emptyset$. 
		Use the definition of $\ell$-full set to find a coloring of any such $V_i$ and continue in a similar manner until the whole $A$ is colored.
	\end{proof}

	\subsubsection{Rake and Compress}
	
	\begin{proposition}[$\ell$-full $\Rightarrow$ $\local(O(\log n))$]\label{thm:ellfull_to_logn}
		Let $\Pi = (\Sigma, \fV, \fE)$ be an LCL problem with an $\ell$-full set $\fV' \subseteq \fV$. Then $\Pi$ can be solved in $O(\log n)$ rounds in  $\local$.
	\end{proposition}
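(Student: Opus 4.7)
My plan is to compute the rake-and-compress decomposition of \cref{lem-rake-and-compress-modified} with parameter $\ell' := \ell$ in $O(\log n)$ rounds, giving layers $V(T) = \VR{1} \cup \VC{1} \cup \cdots \cup \VR{L}$ with $L = O(\log n)$, and then assign $\Pi$-labels to the vertices in reverse order $\VR{L}, \VC{L-1}, \VR{L-1}, \VC{L-2}, \ldots, \VC{1}, \VR{1}$. Throughout the process I maintain the invariant that every already-labeled vertex carries a configuration from $\fV'$ and that every edge between already-labeled vertices satisfies $\fE$; at the end every vertex is labeled and hence a $\Pi$-coloring is produced.

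For the topmost layer $\VR{L}$, which is independent and has no constraints from above, I fix an arbitrary $c^* \in \fV'$ and assign its half-edge labels to each vertex arbitrarily. For a compress layer $\VC{i}$, \cref{lem-rake-and-compress-modified} ensures that every maximal path $P_j = (v_1, \ldots, v_s)$ with $\ell \le s \le 2\ell$ has exactly two neighbors $u, w$ in already-labeled upper layers, carrying configurations $c_u, c_w \in \fV'$. The concatenated path $u, v_1, \ldots, v_s, w$ has $s+2 \ge \ell$ vertices with its two endpoint configurations already fixed from $\fV'$, so \cref{def:ellfull} directly supplies $\fV'$-configurations for $v_1, \ldots, v_s$ such that all $s+1$ internal edges are valid. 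For a rake layer $\VR{i}$, each $v \in \VR{i}$ has at most one neighbor $u$ in the already-labeled upper layers. If there is none, assign any $c^* \in \fV'$; otherwise let $c_u \in \fV'$ be $u$'s configuration and $\sigma_u$ the label on $u$'s half-edge towards $v$. Apply \cref{def:ellfull} to a conceptual length-$\ell$ path with endpoint configurations $c_u$ and any $c^* \in \fV'$: the resulting configuration $c' \in \fV'$ assigned to the neighbor of $c_u$ in that path comes with a half-edge label $\sigma'$ towards $c_u$ satisfying $\{\sigma_u, \sigma'\} \in \fE$. Assign $c_v := c'$ to $v$, placing $\sigma'$ on the half-edge of $v$ adjacent to $u$ and distributing the remaining labels of $c'$ arbitrarily among the other half-edges.

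\textbf{Round complexity.} The decomposition costs $O(\log n)$ rounds by \cref{lem-rake-and-compress-modified}. Each compress layer is processed in $O(\ell) = O(1)$ rounds since every path has constant length and can locally gather the two boundary configurations and invoke the extension; each rake layer takes $O(1)$ rounds because the vertices of $\VR{i}$ are independent and each needs only its single already-labeled neighbor. Since each vertex's label depends only on labels within its $O(L)$-neighborhood, the whole construction is simulated in $O(L) = O(\log n)$ rounds of $\local$.

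\textbf{Main subtlety.} The only minor obstacle is that \cref{def:ellfull} is phrased for paths of at least $\ell$ vertices rather than for the single-edge extension needed in the rake step; I circumvent this by simulating a length-$\ell$ path in the analysis and extracting the configuration of the vertex adjacent to the already-labeled endpoint. With that trick the correctness reduces to a straightforward induction over the reverse layer order.
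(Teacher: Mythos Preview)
Your proof is correct and follows essentially the same approach as the paper: compute the rake-and-compress decomposition of \cref{lem-rake-and-compress-modified}, then fill in layers in reverse order using $\fV'$-configurations, handling compress layers directly via the $\ell$-full property and rake layers via the single-edge extension that this property implies. The only cosmetic differences are that the paper takes $\ell' = \max\{1,\ell-2\}$ rather than $\ell' = \ell$, and states your ``simulate a length-$\ell$ path'' trick for the rake step as the bare observation that for any $c\in\fV'$ and $\ta\in c$ there exist $c'\in\fV'$, $\ta'\in c'$ with $\{\ta,\ta'\}\in\fE$.
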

	
	In order to prove this statement we introduce a variant of the so-called rake-and-compress decomposition algorithm.
    
	The rake-and-compress process was first defined in~\cite{MillerR89}. It was later generalized and applied in the study of the $\LOCAL$ model~\cite{chang_pettie2019time_hierarchy_trees_rand_speedup,ChangHLPU20,chang2020n1k_speedups,balliu2021_rooted_trees}. We consider the following version of rake-and-compress.

	\begin{definition}[Rake-and-compress process]\label{def-rake-and-compress} Given two positive integers $\gamma$ and $\ell$, the  rake-and-compress process on a finite forest $T$ is defined as follows. For $i = 1, 2, 3, \ldots$, until all vertices are removed, perform $\gamma$ number of \rake\ and then perform one \compress. 
		We write $\VR{i}$ to denote the set of vertices removed during the  \rake\ operations in the $i$-th iteration and we write  $\VC{i}$ to denote the set of vertices removed during the  \compress\ operation in the $i$-th iteration.
	\end{definition}

	It is clear that $V(T) = \VR{1} \cup \VC{1} \cup \VR{2} \cup \VC{2} \cup   \cdots$  in \cref{def-rake-and-compress}, and this is called a rake-and-compress decomposition. We define the \emph{depth} of the decomposition as the smallest number $d$ such that $V(T) = \VR{1} \cup \VC{1} \cup \VR{2} \cup \VC{2} \cup   \cdots \cup \VR{d}$. That is, $d$ is the smallest iteration number such that  by the time we finish all the \rake\ operations in the $d$-th iteration, all vertices have been removed. The following lemma gives an upper bound $L$ on the depth $d$ of the decomposition.

	\begin{lemma}[\cite{chang_pettie2019time_hierarchy_trees_rand_speedup}]\label{lem:size-rake-compress-1} For every $\ell>0$ there is a constant $C_\ell$ so that in the rake-and-compress decomposition of an $n$-vertex forest $T$ with degrees $\leq \Delta$ using parameters $\ell$ and $\gamma = 1$, we have  $V(T) = \VR{1} \cup \VC{1} \cup \VR{2} \cup \VC{2} \cup   \cdots \cup \VR{L}$ for some $L=C_\ell\log n$.
	\end{lemma}
	
	\paragraph{Local Complexity}
	Once we have an upper bound $L \geq d$ on the depth $d$ of the decomposition, it is clear that the rake-and-compress decomposition of \cref{def-rake-and-compress} can be computed in $O((\ell + \gamma)L)$ rounds. For example, if $\ell = O(1)$ and $\gamma = 1$, then  \cref{lem:size-rake-compress-1} implies that $L = O(\log n)$, so the decomposition algorithm finishes in $O((\ell + \gamma)L) = O(\log n)$ rounds.

	\paragraph{Post-processing} Now we focus on the case of $\gamma = 1$ and $\ell = O(1)$.
	As $\gamma = 1$, each connected component in the subgraph induced by $\VR{i}$ is either a single vertex or an edge. Each connected component in the subgraph induced by $\VC{i}$ is a path of at least $\ell$ vertices. 
	
	For some applications, it is desirable that each $\VR{i}$ is an independent set and each $\VC{i}$ is a collection of paths of $O(1)$ vertices. By doing a post-processing step to modify the given rake-and-compress decomposition $V(T) = \VR{1} \cup \VC{1} \cup \VR{2} \cup \VC{2} \cup \cdots$, it is possible to attain these properties. Specifically, we have the following lemma.
	
	\begin{lemma}[Rake-and-compress decomposition with post-processing~\cite{chang_pettie2019time_hierarchy_trees_rand_speedup}]\label{lem-rake-and-compress-modified}
		Given any constant integer $\ell' \geq 1$, there is an $O(\log n)$-round $\LOCAL$ algorithm that decomposes the vertices of an $n$-vertex forest $T$ into $2L - 1$ layers \[V(T) = \VR{1} \cup \VC{1} \cup \VR{2} \cup \VC{2} \cup \VR{3} \cup \VC{3} \cup \cdots \cup \VR{L},\] 
		with $L= O(\log n)$ satisfying the following requirements.
		\begin{itemize}
			\item For each $1 \leq i \leq L$, $\VR{i}$ is an independent set, and each  $v \in \VR{i}$ has at most one neighbor in $\VR{L} \cup \VC{L-1} \cup \VR{L-1} \cup \cdots \cup \VC{i}$.
			\item For each $1 \leq i \leq L-1$, $\VC{i}$ is a collection of disjoint paths $P_1, P_2, \ldots, P_k$, where each $P_j=(v_1, v_2, \ldots, v_s)$ satisfies the following requirements.
			\begin{itemize}
				\item The number $s$ of vertices in $P_j$ satisfies $\ell' \leq s \leq 2 \ell'$.
				\item There are two vertices $u$ and $w$ in  $\VR{L} \cup \VC{L-1} \cup \VR{L-1} \cup \cdots \cup \VR{i+1}$ such that $u$ is adjacent to $v_1$, $w$ is adjacent to $v_s$, and $u$ and $w$  are the only vertices in  $\VR{L} \cup \VC{L-1} \cup \VR{L-1} \cup \cdots \cup \VR{i+1}$ that are adjacent to $P_j$.
			\end{itemize}
		\end{itemize}
	\end{lemma}

	See \cref{fig:rake_and_compress} for  an example of a decomposition of \cref{lem-rake-and-compress-modified} with $\ell' = 4$. 
    
	\begin{proof}
		We first compute a rake-and-compress decomposition $V(T) = \VR{1} \cup \VC{1} \cup \VR{2} \cup \VC{2} \cup \VR{3} \cup \VC{3} \cup \cdots$ of \cref{def-rake-and-compress} with parameters $\ell = \ell'+2$ and $\gamma = 1$. By \cref{lem:size-rake-compress-1}, the decomposition can be computed in   $O(\log n)$  rounds in the $\LOCAL$ model.
		
		To satisfy all the requirements, we consider the following post-processing step, which takes $O(\log^\ast n)$ rounds to perform. For each $i$, the vertex set $\VR{i}$ is a collection of non-adjacent vertices and edges. For each edge $\{u,v\}$ with $u \in \VR{i}$ and $v \in \VR{i}$, we promote one of $u$ and $v$ to $\VR{i+1}$. 
		For each $i$,  the vertex set  $\VC{i}$ is a collection of disjoint paths of at least $\ell'+2$ vertices. For each of these paths $P$, we compute an independent set   $I\subset V(P)$ satisfying the following conditions.
		\begin{itemize}
			\item  $I$ is an independent set that contains both endpoints of $P$.
			\item Each connected component of the subgraph induced by $V(P) \setminus I$ has at least $\ell'$ vertices and at most $2 \ell'$ vertices.
		\end{itemize}
		Then we promote all the vertices in $I$ to $\VR{i+1}$.
        Note that $I$ remains independent as a subset of $\VR{i+1}$ as no neighbor of $v\in I$ can be an element of $\VR{i+1}$.
        As $\ell' = O(1)$, such an independent set $I$ can be computed in $O(\log^\ast n)$ rounds~\cite{chang_pettie2019time_hierarchy_trees_rand_speedup,cole86}.
		It is straightforward to verify that the new decomposition $V(T) = \VR{1} \cup \VC{1} \cup \VR{2} \cup \VC{2} \cup \VR{3} \cup \VC{3} \cup \cdots$ after the modification satisfies all the requirements.
        By \cref{lem:size-rake-compress-1}, there is $L = O(\log n)$ such that $V(T) = \VR{1} \cup \VC{1} \cup \VR{2} \cup \VC{2} \cup \VR{3} \cup \VC{3} \cup \cdots \cup \VR{L}$.
	\end{proof}

	\begin{figure}[ht]
		\centering
		\includegraphics[width= \textwidth]{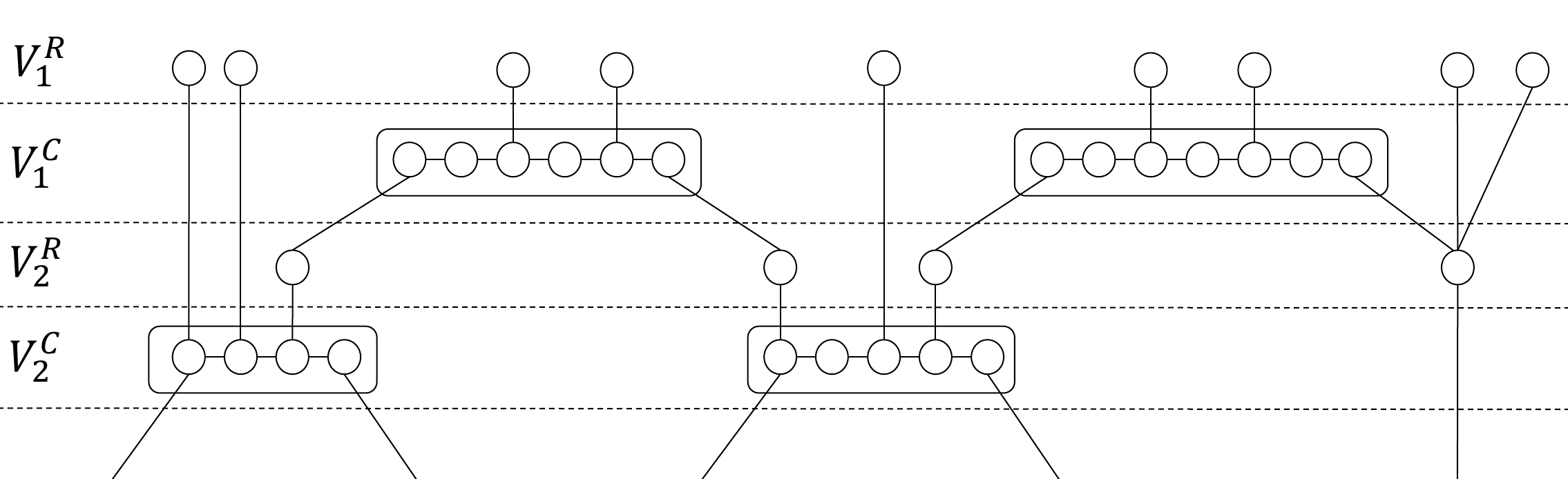}
		\caption{A  decomposition satisfying the requirements in \cref{lem-rake-and-compress-modified}. }
		\label{fig:rake_and_compress}
	\end{figure}
    
    Now we are ready to show \cref{thm:ellfull_to_logn}.
	\begin{proof}[Proof of \cref{thm:ellfull_to_logn}]
		The algorithm first computes a decomposition of \cref{lem-rake-and-compress-modified} with $\ell' = \max\{1, \ell - 2\}$ in $O(\log n)$ rounds.
		Given such a decomposition  \[V(T) = \VR{1} \cup \VC{1} \cup \VR{2} \cup \VC{2} \cup \VR{3} \cup \VC{3} \cup \cdots \cup \VR{L} \ \  \text{with $L = O(\log n)$},\] we  present an algorithm that solves $\Pi$   in $O(\log n)$ rounds by labeling the vertices in this order: $\VR{L}$, $\VC{L-1}$, $\VR{L-1}$, $\ldots$, $\VR{1}$. The algorithm only uses the vertex configurations in the $\ell$-full set $\fV'$.
		
		\paragraph{Labeling $\VR{i}$}
		Suppose  all vertices in $\VR{L}, \VC{L-1}, \VR{L-1}, \ldots, \VC{i}$ have been labeled using $\fV'$.  By \cref{lem-rake-and-compress-modified},   each $v \in \VR{i}$ has at most one neighbor in  $\VR{L} \cup \VC{L-1} \cup \VR{L-1} \cup \cdots \cup \VC{i}$.
		If $v \in \VR{i}$ has no neighbor in $\VR{L} \cup \VC{L-1} \cup \VR{L-1} \cup \cdots \cup \VC{i}$, then we can label the half-edges surrounding $v$ by any $c \in \fV'$ arbitrarily.
		Otherwise, $v \in \VR{i}$ has exactly one neighbor $u$ in $\VR{L} \cup \VC{L-1} \cup \VR{L-1} \cup \cdots \cup \VC{i}$. Suppose the vertex configuration of $u$ is $c$, where the half-edge label on $\{u,v\}$ is $\ta \in c$.  A simple observation from the definition of $\ell$-full sets is that for any $c \in \fV'$ and any $\ta \in c$, there exist $c' \in \fV'$ and $\ta' \in c'$ so that $\{\ta, \ta'\} \in \fE$.
		Hence we can label the half-edges surrounding $v$ by  $c' \in \fV'$  where the half-edge label on $\{u,v\}$ is $\ta' \in c'$.

		\paragraph{Labeling $\VC{i}$} Suppose  all vertices in $\VR{L}, \VC{L-1}, \VR{L-1}, \ldots, \VR{i+1}$ have been labeled using $\fV'$.  By \cref{lem-rake-and-compress-modified}, $\VC{i}$ is a collection of disjoint paths $P_1, P_2, \ldots, P_k$.
		Moreover, the number of nodes $s$ in each   path $P_j =(v_1, v_2, \ldots, v_s)$ satisfies $\ell' \leq s \leq 2 \ell'$, and there are two vertices $u$ and $w$ in $\VR{L} \cup \VC{L-1} \cup \VR{L-1} \cup \cdots \cup \VR{i+1}$ such that  $u$ and $w$ are the only vertices in $\VR{L} \cup \VC{L-1} \cup \VR{L-1} \cup \cdots \cup \VR{i+1}$ that are adjacent to $P_j$,  $u$ is adjacent to $v_1$, and $w$ is adjacent to $v_s$.
		Now consider the path $P_j'= (u, v_1, v_2, \ldots, v_s, w)$. As $u$ and $w$ are in $\VR{L} \cup \VC{L-1} \cup \VR{L-1} \cup \cdots \cup \VR{i+1}$, they have been assigned half-edge labels using $\fV'$. Since  $P'$ contains at least $\ell'+2 \geq \ell$ vertices, the definition of $\ell$-full sets  ensures that we can label $v_1, v_2, \ldots, v_s$ using vertex configurations in $\fV'$ in such a way that the half-edge labels on $\{x, v_1\}, \{v_1, v_2\}, \ldots, \{v_s, y\}$ are all in $\fE$.
	\end{proof}

	\subsection{Necessity}
	
	We start by showing that (3.) in \cref{thm:MainBaireLog} is necessary for $\baire$.
	
	\begin{theorem}[Bernshteyn \cite{Bernshteyn_work_in_progress}]\label{thm:baireellfull}
		Let $\Pi=(\Sigma,\fV,\fE)$ be an LCL problem and suppose that $\Pi\in \baire$.
		Then $\Pi$ admits an $\ell$-full set $\fV'\subseteq\fV$ for some $\ell>0$.
	\end{theorem}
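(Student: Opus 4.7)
The plan is to apply the $\baire$ hypothesis to a suitably rich Borel tree, namely $\lab_\Delta$ on the space $X = \freeT/{\sim}$, and then extract $\fV'$ by a Baire category pigeonhole combined with an iterative refinement. The choice of $\lab_\Delta$ is deliberate: the $\Aut(T_\Delta)$-action on representatives is free on $\freeT$, so automorphisms act by homeomorphisms and preserve non-meagerness, which will be used to realize any prescribed half-edge pattern. By the hypothesis, we obtain a Baire measurable $\Pi$-coloring $f:H(\lab_\Delta)\to\Sigma$ together with an $\Aut(T_\Delta)$-invariant comeager Borel set $C\subseteq X$ on which $f$ is a valid $\Pi$-coloring.

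For each $[x]\in C$ let $c([x])\in \fV$ denote the multiset configuration $f$ assigns around $[x]$. I would then define a decreasing chain $\fV=\fV_{-1}\supseteq \fV_0\supseteq\fV_1\supseteq\cdots$ recursively by
\[
\fV_i=\{c\in \fV_{i-1}:A_{c,i}\text{ is non-meager}\},\qquad A_{c,i}=\bigl\{[x]\in C:c([x])=c,\ c([y])\in \fV_{i-1}\ \forall [y]\in B_{\lab_\Delta}([x],i)\cap C\bigr\}.
\]
Since $\fV$ is finite this stabilizes at some $\fV'=\fV_{i_0}$. A standard Baire pigeonhole (the union over $c\in\fV$ of the $A_{c,0}$'s covers $C$, which is comeager, so some $A_{c,0}$ is non-meager; inductively the same argument at each level keeps $\fV'$ nonempty) shows $\fV'\neq\emptyset$.

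To verify $\ell$-fullness with $\ell$ chosen (depending on $i_0$ and $\Delta$) large enough to exceed twice the radius used in the iteration, I take $c_1,c_2\in \fV'$ and an arbitrary half-edge labeling of the endpoints of a length-$\ell$ path compatible with $c_1,c_2$. Each $A_{c_j}:=A_{c_j,i_0}$ is non-meager with the Baire property, hence comeager on some nonempty open $U_j\subseteq X$. Using the freeness of the $\Aut(T_\Delta)$-action, I act by elements of $\AutR$ on representatives to realize the prescribed ordered half-edge pattern at each endpoint (this does not change the equivalence class but reveals that the non-meagerness survives any relabeling). Then I select $[x_1]\in U_1$ and a specific $g\in \Aut(T_\Delta)$ moving the root through a prescribed path of length $\ell-1$, so that $g\cdot x_1$ lies in $U_2$; here the length $\ell$ is chosen so that the neighborhood of $[x_1]$ used to realize the labeling, and the pulled-back neighborhood of $[x_2]$, are disjoint, allowing the two non-meagerness conditions to be met simultaneously by a Kuratowski--Ulam style argument along a transversal. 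The restriction of $f$ to the resulting path lies in $C$ and thus yields a valid $\Pi$-coloring, and the definition of $\fV'$ via $A_{c,i_0}$ guarantees that every intermediate configuration on this path is itself in $\fV'$, producing the desired extension.

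The main obstacle is precisely the simultaneous control at both endpoints: given that $U_1,U_2$ are only non-meager (and could be arbitrarily small open sets located in ``different parts'' of $X$), I must still find a single representative $x_1\in \freeT$ and a single automorphism $g$ realizing a specific length-$\ell$ path so that both the label-match at $[x_1]$ and the label-match at $g\cdot [x_1]\in U_2$ hold. This is where the freeness and the transitive combinatorial richness of $\freeT$ are essential: they allow us to modify the ``tail'' of a representative (far from the root) freely without affecting the labels at $[x_1]$, so a Baire category argument in the product space of representatives of the two endpoints (applied via an $\AutT$-translation) yields the required joint non-meagerness; the stabilization index $i_0$ controls how many intermediate steps must remain in $\fV'$ and thereby determines $\ell$.
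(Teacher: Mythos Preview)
Your overall strategy---pick a canonical Borel $\Delta$-regular tree, take $\fV'$ to be the configurations that occur non-meagerly under a Baire solution, and realise the $\ell$-full condition by a path argument inside that tree---is exactly the paper's approach. But several of your implementation choices create work that the paper avoids, and two of your steps are not actually justified.

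\textbf{The iterative refinement is unnecessary.} Since $\lab_\Delta$ is a continuous graph, the neighbourhood of a meager set is meager; hence for any $c\in\fV_0$ the set $A_{c,0}\setminus A_{c,1}$ is meager, so $\fV_1=\fV_0$ and the chain stabilises immediately. You never use $i_0>0$, and the paper simply sets $\fV'=\{c:\,c\text{ occurs non-meagerly}\}$.

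\textbf{The $\AutR$ step is not correct as written.} Acting by $h\in\AutR$ on a representative $x$ does not change $[x]$, and therefore does not change which half-edge of $[x]$ receives which label under $f$. What is true (and what you need) is that on the representative space $\freeT$ the sets $S^{(v)}=\{x:c([x])=c_1,\ f([x],\{[x],[g_v x]\})=a\}$ are permuted by $\AutR$ and hence all have the same category; this takes a short coset computation, not the sentence you wrote.

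\textbf{The simultaneous-endpoint step is the real content, and your sketch does not supply it.} The paper handles this cleanly by two moves you omit. First, it works on the Cayley graph of $\Gamma=\mathbb{Z}_2^{*\Delta}$ rather than $\lab_\Delta$; the canonical edge $\Delta$-colouring makes ``the half-edge in direction $i$'' well-defined, so controlling which label sits on the path-edge is immediate. Second---and this is the key technical point---it uses that the Borel map $F$ is continuous on a comeager set $C$; hence every $c\in\fV'$ is realised on $U_s\cap C$ for some finite pattern $s$ of radius $\le t$. Then for any path of length $>2t$ one places the two patterns at the endpoints (they do not overlap), intersects with $C$ and with the comeager set where all intermediate configurations lie in $\fV'$, and reads off the required labeling. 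Your ``Kuratowski--Ulam along a transversal'' is trying to reinvent this disjoint-pattern argument without naming continuity; as written it does not explain why $U_1\cap g^{-1}U_2\neq\emptyset$ for a $g$ realising the prescribed path, which is exactly what the finite-pattern encoding gives you for free.

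In short: switch to the Cayley graph of $\mathbb{Z}_2^{*\Delta}$, drop the iteration, and replace the hand-wave with ``$F$ is continuous on a comeager set, hence each $c\in\fV'$ is determined by a pattern of bounded radius $t$''. Then $\ell>2t+1$ works by the standard disjoint-support argument.
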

	\begin{proof}
		Suppose that every Borel $\Delta$-regular forest admits a Borel solution on a $\tau$-comeager set for every compatible Polish topology $\tau$.
		In particular, this holds for the Borel graph $\fG$ induced by the standard generators of the group $\Gamma$, the free product of $\Delta$-copies of $\mathbb{Z}_2$, on the free part of the shift action $\Gamma\curvearrowright \free(\{0,1\}^\Gamma)$ endowed with the product topology.
		Let $F$ be such a $\Pi$-coloring of $\fG$.
		Using the canonical edge $\Delta$-coloring $F$ gives rise to a map $\bar{F}:\free(\{0,1\}^\Gamma) \to \Sigma^\Delta$.
		Write $\fV'\subseteq \fV$ for the configurations of half-edge labels around vertices that $\bar{F}$ outputs on a non-meager set.
		Let $C$ be a $\Gamma$-invariant comeager set on which $\bar{F}$ is continuous. Note that we have that $\rng(\bar{F})\subseteq \mathcal{V}'$, and for any given element of $\fV'$ there is a map $s:B_{\operatorname{Cay}(\Gamma)}({\bf 1}_\Gamma,k)\to \{0,1\}$ such that if $x\in C$ satisfies $x\upharpoonright B_{\operatorname{Cay}(\Gamma)}({\bf 1}_\Gamma,k)=s$, then $\bar{F}$ outputs the given element of $\fV'$.
		Since $\fV'$ is finite, we can take $t>0$ to be the maximum of such $k$'s.
		
		We claim that $\fV'$ is $\ell$-full for $\ell=2t+2$.
        Indeed, let $P$ be a spiky path of length $k\ge \ell$ and $c_1,c_2\in \mathcal{V}'$ be an arbitrary labelings of half-edges around the endpoints of $P$.
        Assume additionally that the half-edges around the endpoints of $P$ are colored by the generators of $\Gamma$ in such a way that $c_1,c_2$ can be viewed as elements of $\Sigma^\Delta$.
        This is possible by the definition of $\mathcal{V}'$ and $\bar{F}$.
        Write $\alpha,\beta\in \Delta$ for the edge labels that correspond to the real edges of the endpoints of $P$.
        By the choice of $t$, $\ell$ and $\mathcal{V}'$, we can find a reduced word $w=\gamma_{k-1}\cdot\cdots\gamma_{1} \in \Gamma$ and $s:B_{\operatorname{Cay}(\Gamma)}({\bf 1}_\Gamma, k+t)\to \{0,1\}$ such that $\alpha=\gamma_1$, $\beta=\gamma_{k-1}$, $s \restriction B_{\operatorname{Cay}(\Gamma)}({\bf 1}_\Gamma,t)=c_1\in \Sigma^\Delta$ and $w \cdot s \restriction B_{\operatorname{Cay}(\Gamma)}({\bf 1}_\Gamma,t)=c_2\in \Sigma^\Delta$, where $w \cdot s(w')=s(w^{-1}w')$.
		Since $C$ is comeager, there is some $x \in C$ with $x\restriction B_{\operatorname{Cay}(\Gamma)}({\bf 1}_\Gamma, \ell+k)=s$. Then, the function $F$ evaluated on the half-edges belonging to the spiky path $(x,\gamma_1\cdot x,\gamma_2\gamma_1\cdot x,\dots, \gamma_\ell \cdots \gamma_1 \cdot x)$ witnesses the $\ell$-fullness, by the $\Gamma$-invariance of $C$.
	\end{proof}
	
	To finish the proof of \cref{thm:MainBaireLog} we need to demonstrate the following theorem.  Note that  $\local(n^{o(1)}) =  \local(O(\log n))$ according to the $\omega(\log n)$ -- $n^{o(1)}$ complexity gap~\cite{chang_pettie2019time_hierarchy_trees_rand_speedup}.
	
	\begin{restatable}{theorem}{logcomb}\label{thm:logn_to_ellfull}
		Let $\Pi = (\Sigma, \fV, \fE)$ be an LCL problem solvable in  $\local(n^{o(1)})$ rounds.
		Then there exists an $\ell$-full set  $\fV' \subseteq \fV$ for some $\ell \geq 2$.
	\end{restatable}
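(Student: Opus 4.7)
The plan is to carry out a Chang--Pettie style analysis of the algorithm $\fA$ \cite{chang_pettie2019time_hierarchy_trees_rand_speedup}. By the $\omega(\log n)$--$n^{o(1)}$ gap theorem, we may assume $\fA$ has complexity $T(n) = O(\log n)$; by taking $n$ large we can make $T(n)$ much smaller than the specific $\ell$ we aim for, while still satisfying $T(n) = O(\log n)$.

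First I would recursively construct, for each $k \geq 1$, a collection $\mathcal{T}_k$ of rooted, layered, partially labeled $\Delta$-regular trees. The layering is in the spirit of the rake-and-compress decomposition of \cref{lem-rake-and-compress-modified}: vertices in high layers come equipped with half-edge labels obtained by simulating $\fA$ under some identifier assignment, while the low-layer interior near the root is left unlabeled. The base class $\mathcal{T}_1$ consists of bounded-depth trees with the root at the bottom layer. Given $\mathcal{T}_k$, the class $\mathcal{T}_{k+1}$ is built by gluing several members of $\mathcal{T}_k$ together (either via a rake step at a new root, or via a compress step along a short path) and extending the simulation of $\fA$ to the newly added high-layer vertices. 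Let $\fV_k \subseteq \fV$ be the set of vertex configurations that appear at the root of some $T \in \mathcal{T}_k$ in any completion of $T$'s partial labeling to a valid $\Pi$-coloring. Because $\fV$ is finite, the sequence $(\fV_k)_k$ must stabilize at some layer $k^*$, and I set $\fV' := \fV_{k^*}$.

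To verify that $\fV'$ is $\ell$-full for $\ell$ chosen much larger than $T(n)$, let $c_1, c_2 \in \fV'$ be given with arbitrary half-edge labelings at the two endpoints of a path $P$ of length $\ell$. Pick witnesses $T_1, T_2 \in \mathcal{T}_{k^*}$ whose roots realize $c_1, c_2$ respectively, using the $\Delta$-regular symmetry to align the prescribed half-edge labelings. Glue $T_1, T_2$ at the endpoints of $P$ and attach further pieces from $\mathcal{T}_{k^*}$ at each interior vertex of $P$ so that the resulting tree $T^*$ lies in $\mathcal{T}_{k^*+1}$ and every interior vertex of $P$ sits at the same layer. Running $\fA$ on $T^*$ with consistent identifiers produces a valid $\Pi$-coloring of $T^*$; by stabilization $\fV_{k^*+1} = \fV'$, each interior vertex's configuration is in $\fV'$, and by correctness of $\fA$ the edge constraint $\fE$ holds along $P$.

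The main obstacle is managing identifiers and views: one must argue that the root configurations of $T_1$ and $T_2$ \emph{persist} after the gluing, and that the interior vertices of $P$ genuinely see, inside $T^*$, the same $T(n)$-neighborhood views recorded in $\mathcal{T}_{k^*}$. Since $\fA$ only depends on a $T(n)$-radius neighborhood and $\ell \gg T(n)$, the relevant neighborhoods at the various ``docking'' roots are pairwise disjoint, so identifier assignments can be chosen compatibly, but establishing this rigorously requires careful bookkeeping in the recursive definition of $\mathcal{T}_k$ and in the layer structure. A secondary subtlety is to ensure that the sets $\fV_k$ are defined so that stabilization actually forces $\fV_{k^*+1} \subseteq \fV_{k^*}$ (and conversely), which is where the partial-labeling viewpoint of extending to a correct $\Pi$-coloring is essential.
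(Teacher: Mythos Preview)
Your outline captures the high-level Chang--Pettie flavor, but it has a genuine gap exactly at the point you yourself flag as ``the main obstacle'': the persistence of the prescribed endpoint configurations $c_1,c_2$ after gluing. You define $\fV_k$ via \emph{completions} of a partial labeling, so $c_1$ arises from some completion $L_1$ of $T_1$ and $c_2$ from some completion $L_2$ of $T_2$. When you glue into $T^\ast$ and then ``run $\fA$ on $T^\ast$'', you obtain \emph{some} valid $\Pi$-coloring, but there is no reason it restricts to $c_1$ at the root of $T_1$ (with the prescribed half-edge assignment) or to $c_2$ at the root of $T_2$: those roots are low-layer vertices that were \emph{not} labeled by $\fA$ in the witnesses, so the disjointness of $T(n)$-neighborhoods buys you nothing for them. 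Conversely, if you try to keep $L_1$ on $T_1$ and $L_2$ on $T_2$ and only use $\fA$ on the interior of $P$, you have no guarantee the edge constraints hold across the two glue-edges. Stabilization $\fV_{k^\ast+1}=\fV_{k^\ast}$ only tells you each interior configuration lands in $\fV'$ under \emph{some} completion; it does not produce a single completion of $T^\ast$ that simultaneously realizes the prescribed endpoints and keeps the path in $\fV'$.

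The paper closes this gap with two ingredients your sketch is missing. First, it works with the equivalence relation $\simm$ on trees with poles (recording which virtual-half-edge boundary data extend to correct labelings), proves a subtree-replacement lemma and a pumping lemma for bipolar trees, and uses these to build a \emph{fixed point} $W^\ast$ with $\class(Z_f(W^\ast))=\class(W^\ast)$ via a two-dimensional array argument; this, together with a separate monotonicity argument for the sets $R_i$, is what makes the stabilization actually bite (your one-line ``$\fV$ is finite, so $(\fV_k)$ stabilizes'' does not yield $\fV_{k^\ast+1}\subseteq\fV_{k^\ast}$). Second, and crucially, it distinguishes a subset $\fV^\ast\subseteq\fV'$ of configurations that are \emph{fixed by the labeling function $f$} (i.e., determined by the $\fA$-simulation at designated middle vertices). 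These anchored configurations genuinely persist under gluing because their $T(n)$-neighborhoods are preserved by the pumping; arbitrary $c\in\fV'\setminus\fV^\ast$ are then connected to some $\fV^\ast$-vertex by a short path inside a single completion (\cref{lem:find_path_1}), and paths of all sufficiently large lengths between $\fV^\ast$-endpoints are built by varying the gluing position and iterating (\cref{lem:find_path_2,lem:find_path_3}). Without this two-tier $\fV^\ast\subseteq\fV'$ structure, your endpoint-persistence step does not go through.
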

	The next section is devoted to showing this theorem\footnote{An earlier version of this paper contained a different argument, which was less self-contained; see \href{https://arxiv.org/abs/2204.09329v1}{here} or \href{https://vidnyanz.elte.hu/szem/top.html}{here} for more on the development of this proof.}.

	\section{$\local(n^{o(1)})$ implies $\ell$-full}
    \label{sec:implieslfull}

	\subsection{Basic setup and observations}
	Recall \cref{def:Deltareg}: for a finite $\Delta$-regular forest $T$, we denote by $H(T)$ the set of half-edges, $\Hr(T)$ the set of real half-edges and ${\Hv} (T)=H(T)\setminus \Hr (T)$ the set of virtual half-edges.
	For $v\in V(T)$, we write $N^H(v)$ for the set of half-edges adjacent to $v$, ${\Nr} (v)=N^H(v)\cap H_r(T)$ and ${\Nv} (v)=N^H(v)\cap \Hv (T)$.
	We write $\deg_T(v)=|\Nr (v)|$.
	Given $e=\{v,w\}\in E(T)$, where $v,w\in V(T)$, there are $g\in \Nr(v)$ and $h\in \Nr(w)$ that belong to $e$.
	We abuse the notation and denote this fact by $e=\{g,h\}$.
	
	\medskip

	Fix an LCL problem $\Pi=(\Sigma,\mathcal{V},\mathcal{E})$.
	
	\begin{definition}[Bipolar tree]
		\label{def:rooted}
		A \emph{(decorated) bipolar tree} is a triple ${\bf T}=(T,(p,q),\mathfrak{d})$, where
		\begin{enumerate}
			\item $T$ is a finite $\Delta$-regular tree,
			\item $(p,q)\in V(T)^2$ with $\deg_T(p)=\deg_T(q)=\Delta-1$ (note that we allow $p=q$),
			\item $\mathfrak{d}$ is a decoration of $T$ consisting of
			\begin{enumerate}
				\item a partial labeling $\varphi:H(T) \rightharpoonup \Sigma$,
				\item a partial map $\alpha:V(T) \rightharpoonup \N$,
				\item a rank function $\rn:V(T) \to \N$,
				\item a set $D \subseteq V(T)$ of distinguished vertices.
			\end{enumerate}
			
		\end{enumerate}
		We will call $p$ the \emph{positive pole}, and $q$ the \emph{negative pole}, and denote by $g^p, g^q$ the incident virtual half-edges.	
	\end{definition}
	
	For a bipolar tree $\mb{T}$, we will use the convention of denoting $\rn_\mb{T}$, $D_{\mb{T}}$, etc. the corresponding object, while we will use just $T$ for the underlying tree.  
	
	We will define the key equivalence relation on bipolar trees.
	
	\begin{definition}[Equivalence]
		\label{def:equiv} Let $\mathbf{T}$ an $\mathbf{T}'$ be a bipolar trees. Say that $\mb{T} \simm \mb{T}'$ if for every $(\pi,\rho) \in \Sigma^2$, the following are equivalent:
		
		\begin{enumerate}
			\item $\varphi_\mb{T}$ extends to a $\Pi$-coloring $\widehat{\varphi}_{\mb{T}}$ of $T$ with $\widehat{\varphi}_{\mb{T}}(g^{p_\mb{T}})=\pi$ and $\widehat{\varphi}_{\mb{T}}(g^{q_\mb{T}})=\rho$
			\item  $\varphi_{\mb{T}'}$ extends to a $\Pi$-coloring $\widehat{\varphi}_{\mb{T}'}$ of $T$ with $\widehat{\varphi}_{\mb{T}'}(g^{p_{\mb{T}'}})=\pi$ and $\widehat{\varphi}_{\mb{T}'}(g^{q_{\mb{T}'}})=\rho$
		\end{enumerate} 
		
	\end{definition}
	We will use $[\mb{T}]$ to denote the $\simm$-equivalence class of $\mb{T}$, and for a set of bipolar trees $W$ denote by $[W]$ the collection of represented equivalence classes. The following is immediate from the definition.
	\begin{claim}
		\label{cl:basic}
		$[\mb{T}]$ does not depend on $(\alpha_{\mb{T}},\rn_{\mb{T}},D_{\mb{T}})$. 
		There are finitely many $[\mb{T}]$ equivalence classes.
	\end{claim}
	
	Note that whether the labeling $\varphi_{\mb{T}}$ can be extended to a $\Pi$-coloring depends only on the equivalence class of $\mb{T}$, so the following definition makes sense. 
	
	\begin{definition}[Valid class]
		An equivalence class $[\mb{T}]$ is called \emph{valid}, if for every (equivalently, for some) $\mb{T}$ in the class, $\varphi_\mb{T}$ can be extended to a $\Pi$-coloring of $T$.  
	\end{definition}

	Let us make an important, but easy observation. Assume that $\mb{S}, \mb{T}$ are bipolar trees. We say that \emph{$\mb{T}$ is attached to the rest of $\mb{S}$ by its poles}, if $T$ is a subtree of $S$ and every path from a vertex in $V(S)\setminus V(T)$ to a vertex in $V(T)$ goes through a pole of $\mb{T}$. Observe that in this case, if $\mb{T} \simm \mb{T}'$, there is a natural way of replacing $\mb{T}$ by $\mb{T}'$ in $\mb{S}$ and create a new bipolar tree $\mb{S'}$, namely, let
	\[H(S')=\left(H(S) \setminus H(T)\right) \cup H(T'), V(S')=\left(V(S) \setminus V(T)\right) \cup V(T'),\]
	and \[E(S')=\left(E(S) \setminus E(T)\right) \cup E(T') \cup \{\{v,p_{\mb{T'}}\},\{w,q_{\mb{T'}}\}\},\] where $v,w \in V(S) \setminus V(T)$ so that $\{v,p_{\mb{T}}\},\{w,q_{\mb{T}}\} \in E(T)$, if there are such vertices. We also replace $\mathfrak{d}_{\mb{T}}$ by  $\mathfrak{d}_{\mb{T}'}$ as well in the obvious way.

	\begin{claim} 
		\label{cl:replace} 
		
		Let $\mb{S}$, $\mb{T}$ be bipolar trees, assume that $\mb{T}$ is attached to the rest of $\mb{S}$ by its poles, $\mb{T} \simm \mb{T}'$ and $\mb{S}'$ arises from replacing $\mb{T}$ by $\mb{T}'$ in $\mb{S}$. Then for any extension $\widehat{\varphi}_{\mb{S}}$ of $\varphi_{\mb{S}}$ to a $\Pi$-coloring of $S$ there is an extension $\widehat{\varphi}_{\mb{S}'}$ of $\varphi_{\mb{S}'}$ to a $\Pi$-coloring, so that $\widehat{\varphi}_{\mb{S}'}\restriction H(S) \cap H(S')=\widehat{\varphi}_{\mb{S}}\restriction H(S) \cap H(S')$.
		
		In particular, $\mb{S} \simm \mb{S}'$.
		
	\end{claim}
	\begin{proof}
		Let $\pi$ and $\rho$ be the value of $\widehat{\varphi}_{\mb{S}}$ on $g^{p_\mb{T}}$ and $g^{q_\mb{T}}$, that is, the virtual half-edges belonging to the poles of $\mb{T}$. Since $\mb{T} \simm \mb{T}'$, there is a $\Pi$-coloring $\widehat{\varphi}_{\mb{T}'}\supseteq \varphi_{\mb{T}'}$ of $\mb{T}'$ extending its partial coloring, so that the value on the virtual half-edges belonging to the poles of $\mb{T}'$ is also $\pi$ and $\rho$, respectively. But then, since $\mb{T}'$ is attached to the rest of $\mb{S}'$ by the poles, and the labeling is correct around them, we have that $\widehat{\varphi}_{\mb{S}'}:=\widehat{\varphi}_{\mb{T}'} \cup \widehat{\varphi}_{\mb{S}} \restriction (H(S) \cap H(S'))$ is a $\Pi$-coloring extending $\varphi_{\mb{S}'}$. 
		
		The last statement follows from the definition of $\simm$. 
	\end{proof}

	\subsection{Gluing and labeling}

	We will create collections of bipolar trees using a simple gluing operation. 
	
	\begin{definition}[Gluing]
		\label{def:kglue}
		Let $\ell >0$ and $(\mathbf{T}^i_j)_{1 \leq i \leq 2\ell+1, 2 \leq j \leq \Delta-1}$ be a collection of bipolar trees, with positive poles $p^i_j(=p_{\mb{T}^i_j})$, and corresponding virtual edges $g^i_j(=g^{p_{\mb{T}^i_j}})$. We define a bipolar tree $\mb{H}$ as follows by gluing $\mb{T}^i_j$ along the positive poles to a path of length $2\ell+1$ as follows. 
		Take $2\ell+1$-many one vertex $\Delta$-regular trees $(r^i)^{2\ell+1}_{i=1}$ with adjacent half edges $(h^i_j)^\Delta_{j=1}$. 
		\begin{enumerate}
			\item $H$ is the disjoint union of the trees $(r^i)_{i \leq 2\ell+1}$ and the trees $(T^i_j)$ to which we add real edges
			$$\{\{h^i_\Delta,h^{i+1}_1\}:1\le i\le 2\ell\}\cup \{\{g^i_j,h^i_j\}:1\le i\le 2\ell+1, \ 2\le j\le \Delta-2\}$$
			by connecting the corresponding half-edges (see \cref{f:glue}),
			\item $p^\mb{H}=r^1$, $q^{\mb{H}}=r^{2\ell+1}$,
			\item 
			\begin{enumerate}
				\item $\varphi_{\mb{H}}$ is the union of $\varphi_{\mathbf{T}^i_j}$,
				\item $\alpha_{\mb{H}}$ is the union of $\alpha_{\mathbf{T}^i_j}$,
				\item let $\rn_{\mb{H}}(v)=\rn_{{\bf T}^i_j}(v)$, when this is defined, and \[\rn_{\mb{H}}(v)=\max\{\rn_{\mb{T^i_j}}(w):w \in \bigcup_{i,j} V(T^i_j)\}+1\] otherwise,
				\item $D_{\mb{H}}=\{r^{\ell+1}\}\cup \bigcup_{i,j}D_{\mb{T}^i_j}$.
			\end{enumerate} 
		\end{enumerate}
		In such a construction, we will refer to $r^{\ell+1}$ as the \emph{middle vertex}, and to the collection $(\mathbf{T}^i_j)_{1 \leq i \leq 2\ell+1, 2 \leq j \leq \Delta-1}$ as an \emph{array of bipolar trees}. 
	\end{definition}
	We will refer to the created bipolar tree as $\mb{H}((\mathbf{T}^i_j)_{1 \leq i \leq 2\ell+1, 2 \leq j \leq \Delta-1})$, or $\mb{H}((\mathbf{T}^i_j)_{i,j})$ or just $\mb{H}$ if the array is clear from the context. 
	\begin{figure}
		\noindent\includegraphics[width=\linewidth]{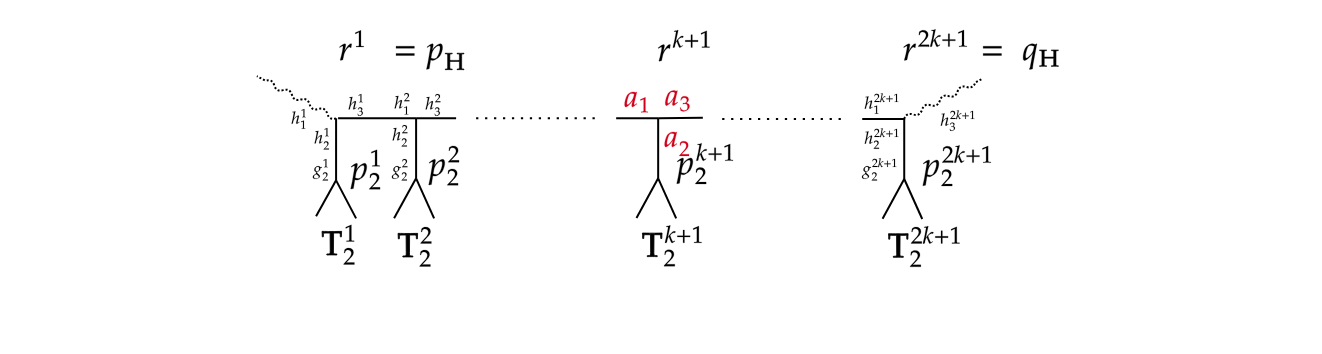}
		\caption{The gluing and labeling procedures for $\Delta=3$}
		\label{f:glue}
	\end{figure}
	Now we extend the partial labeling of $\mb{H}$ around the middle vertex, using a sequence $\mb{a} \in \Sigma^\Delta$. 
	\begin{definition}[$\mb{H}_\mb{a}$]
		If $\mb{a}=(a_1,\dots,a_\Delta) \in \Sigma^\Delta$, and $(\mathbf{T}^i_j)_{1 \leq i \leq 2\ell+1, 2 \leq j \leq \Delta-1}$ is an array, define $\mb{H}_a$ exactly as $\mb{H}$, but with extending the labeling $\varphi_\mb{H}$ to the half-edges $(h^{\ell+1}_j)_{1 \leq j \leq \Delta}$ of $r^{\ell+1}$ with $\varphi_{\mb{H}_{\mb{a}}}(h^{\ell+1}_j)=a_j$. 
		
	\end{definition}

	\begin{claim}
		\label{cl:class}
		The operation $\mb{H}$ and for each $\mb{a}$ the operations $\mb{H}_\mb{a}$ preserve $[\cdot]$-classes, that is, if $({\bf T}^i_j)_{1 \leq i \leq 2\ell+1,2 \leq j \leq \Delta-1}$ and $({\bf T'}^i_j)_{1 \leq i \leq 2\ell+1,2 \leq j \leq \Delta-1}$ are such that ${\bf T}^i_j \simm {\bf T'}^i_j$ for all $i,j$, then for the corresponding trees $\mb{H}\simm \mb{H}'$ and $\mb{H}_\mb{a}\simm \mb{H}'_\mb{a}$ hold.
	\end{claim}
	\begin{proof}
		Apply \cref{cl:replace} and replace ${\bf T}^i_j$ by ${\bf T'}^i_j$ one-by-one. Such an operation does not change the $\classs{\cdot}$-class of $\bf{H}$ and transforms it to ${\bf H'}$. The same argument works for $\mb{H}_\mb{a}$.
	\end{proof}

	The core idea is to repeatedly apply the procedure of creating $\mb{H}_\mb{a}$ to a collection of bipolar trees, using some function $f$ to obtain $\mb{a}$. The whole process will depend only on $\simm$-equivalence classes. As we will see, it will be also necessary to be able to switch the polarity, i.e., to exchange the poles. 
	
	\begin{definition}[$\Gamma_{\ell,f}$] \label{def:Gamma} For a bipolar tree $\mb{T}$, denote by $\bar{\mb{T}}$ the bipolar tree obtained by switching its positive and negative poles. 
		
		Assume that $W$ is a collection of bipolar trees, and $f:[W]^{\sizeofarray} \to \Sigma^\Delta$. 
		Define $\mathbf{T} \in \Gamma_{\ell,f}(W)$ if for some $(\mathbf{T}^i_j)_{i,j} \in W^{(2\ell+1) \times (\Delta-2)}$ we have \[\mathbf{T}=\mathbf{H}_{f(([\mathbf{T}^i_j])_{i,j})}((\mathbf{T}^i_j)_{i,j}) \text{ or }\mathbf{T}=\bar{\mathbf{H}}_{f([\mathbf{T}^i_j]})_{i,j})((\mathbf{T}^i_j)_{i,j}).\]
	\end{definition}
	
	Let us make an observation about transferring this map to a map on collections of equivalence classes.
	
	\begin{claim}
		\label{cl:well-defd}
		Let $\ell>0$, $S$ be the collection of equivalence classes of bipolar trees, $X \subseteq S$ nonempty, and $f:X^{\sizeofarray} \to \Sigma^\Delta$. Define $[\Gamma_{\ell,f}]:\mathcal{P}(X) \setminus \{\emptyset\} \to \mathcal{P}(S) \setminus \{\emptyset\}$ by $[\Gamma_{\ell,f}](Y)=[\Gamma_{\ell,f}(W)]$, where $W$ is arbitrary with $[W]=Y$, and $Y \subseteq X$ non-empty. Then 
		\begin{itemize}
			\item $[\Gamma_{\ell,f}]$ is well-defined and 
			\item monotone, i.e., $Y \subseteq Y'$ implies $[\Gamma_{\ell,f}](Y)\subseteq[\Gamma_{\ell,f}](Y')$. 
		\end{itemize}
	\end{claim}
	\begin{proof}
		The fact that $[\Gamma_{\ell,f}]$ is well-defined follows from \cref{cl:class}, while monotonicity comes from the fact that $Y \subseteq Y'$ implies $Y^{\sizeofarray} \subseteq Y'^{\sizeofarray}$ and the definition of $\Gamma_{\ell,f}$. 
	\end{proof}

	\subsection{Fixed points}

	The crux of the argument is the following fixed point theorem, which we will prove in this section. 
	
	\begin{theorem}\label{thm:MainConstruction}
		There are $\ell>0$, a set $X \neq \emptyset$ of equivalence classes of bipolar trees, and $f:X^{\sizeofarray} \to \Sigma^\Delta$ such that
		$X=[\Gamma_{\ell,f}](X),$
		and every class in $X$ is valid. 
	\end{theorem}
	The number $\ell$ will be found in \cref{lm:pump}, $X$ will be constructed in Section \ref{s:finalconstruct}, while $f$ will come from \cref{cor:existscertif} below.

	\subsubsection{Tools: pumping and fixed points}
	Let us first establish our basic tool, providing us the number $\ell$. 
	\begin{lemma}
		\label{lm:pump}
		There exists a number $\ell>0$ so that for any array $(\mb{T}^i_j)_{1 \leq i \leq 2\ell+1, 2 \leq j \leq \Delta-1}$ of bipolar trees and any $t \ge \ell$, there exists an $m \in \{t,\dots,t+\ell\}$ and a function $\Lambda:\{1,\dots,2m+1\} \to \{1,\dots,2\ell+1\}$ so that $\mb{H}_{\mb{a}}((\mb{T}^i_j)_{1 \leq i \leq 2\ell+1, 2 \leq j \leq \Delta-1}) \simm \mb{H}_{\mb{a}}((\mb{T}^{\Lambda(i)}_j)_{1 \leq i \leq 2m+1,2\leq j\leq \Delta-1})$ for every $\mb{a}\in \Sigma^\Delta$. 
	\end{lemma}
	
	The idea is that for every $k \geq 1$, we look at the equivalence classes of the trees obtained by cutting off the trees farther than $k$ from the middle vertex. If $\ell$ is large enough, some class will be repeated. Then, replacing the smaller tree with the larger one will not change the class of the original tree. This allows us to find arbitrary large $m$'s with the above property.
	\begin{proof}
		Let $N$ be the number of $\simm$ equivalence classes of bipolar trees and let $\ell>N^{|\Sigma^\Delta|}$. To ease the notation, for $m \ge 1$ and a map $\Lambda:\{1,\dots,2m+1\} \to \{1,\dots,2\ell+1\}$, we will denote by $\mb{H}_\mb{a}(\Lambda)$ the tree $\mb{H}_{\mb{a}}((\mb{T}^{\Lambda(i)}_j)_{1 \leq i \leq 2m+1,2\leq j\leq \Delta-1})$. Note that for every $t\ge \ell$ we have to find $m \in \{t,\dots,t+\ell\}$ and $\Lambda$ so that $\mb{H}_\mb{a}(\Lambda)\simm \mb{H}_\mb{a}(\id_{2\ell+1})$ holds for every $\mb{a}\in \Sigma^\Delta$.
		
		For each $k \leq \ell$, consider the map $\Lambda_k: \{1,\dots,2k+1\} \to \{1,\dots,2\ell+1\}$ defined by $\Lambda_k(i)=\ell-k+i$. To each $\Lambda_k$, we can associate the sequence of $\simm$ equivalence classes of the trees $([\mb{H}_{\mb{a}}(\Lambda_k)])_{\mb{a} \in \Sigma^\Delta}$. By the pigeonhole principle, there are $k<k' \leq \ell$, so that $([\mb{H}_{\mb{a}}(\Lambda_k)])_{\mb{a} \in \Sigma^\Delta}=([\mb{H}_{\mb{a}}(\Lambda_{k'})])_{\mb{a} \in \Sigma^\Delta}$, i.e., for each $\mb{a}$, $\mb{H}_{\mb{a}}(\Lambda_k) \simm \mb{H}_{\mb{a}}(\Lambda_{k'})$.
		
		For $l\ge \ell$, assume that $\Lambda^l: \{1,\dots,2l+1\} \to \{1,\dots,2\ell+1\}$ is a map, so that the interval $[l-k+1,l+k+1]$ is monotonly bijected onto $[\ell-k+1,\ell+k+1]$ and for all $\mb{a}$ we have $\mb{H}_{\mb{a}}(\Lambda^l) \simm \mb{H}_{\mb{a}}(\id_{2\ell+1}).$ Observe that for the map $\Lambda^{l+k'-k}:\{1,\dots,2(l+k'-k)+1\} 
		\to \{1,\dots,2\ell+1\}$ defined by 
		\[\Lambda^{l+k'-k}(i)=\begin{cases}
		\Lambda^l(i), \text{ if } i<l-k+1\\
		i-(l-k+1)+(\ell-k'+1), \text{ if } l-k+1\le i \le l-k+1+2k'\\
		\Lambda^l(i-2k'+2k), \text{ if } l-k+1+2k'<i,
		\end{cases}\]
		we have $\mb{H}_{\mb{a}}(\Lambda^{l+k'-k}) \simm \mb{H}_{\mb{a}}(\id_{2\ell+1})$ for each $\mb{a}$: indeed, the tree $\mb{H}_{\mb{a}}(\Lambda^{l+k'-k})$ arises from $\mb{H}_{\mb{a}}(\Lambda^{l})$ by replacing a copy of $\mb{H}_{\mb{a}}(\Lambda_k)$ with $\mb{H}_{\mb{a}}(\Lambda_{k'})$, hence by tree replacement, i.e., \cref{cl:replace}, the equivalence class is not changed. 
		
		Using this observation repeatedly, starting from $l=\ell$, for all $j\in \N$, we can find maps $\Lambda^{\ell+j(k'-k)}:\{1,\dots,2(\ell+j(k'-k))+1)\} 
		\to \{1,\dots,2\ell+1\}$ with $\mb{H}_{\mb{a}}(\Lambda^{\ell+j(k'-k)}) \simm \mb{H}_{\mb{a}}(\id_{2\ell+1})$ for all $\mb{a}$. In particular, for all $t \geq \ell$, there is some $m \in \{t,\dots,t+\ell\}$ with $m=\ell+j(k'-k)$, so $\Lambda=\Lambda^m$ works as desired.  
	\end{proof}
	Now let us fix such an $\ell$ for the rest of the proof. 
	
	We will use the following version of Tarski's fixed point theorem. 
	\begin{proposition}
		\label{pr:UniformBound0}
		Let $S$ be a finite set, $\Gamma:\mathcal{P}(S)\setminus\{\emptyset\} \to \mathcal{P}(S)\setminus \{\emptyset\}$ be a monotone map. Then there is an $X$ with $\Gamma(X)=X$.  
	\end{proposition}
	\begin{proof}
		As $\mathcal{P}(S)$ is finite, we have that $\Gamma^{i+k}(S)=\Gamma^{i}(S)$ for some $i \in \N$ and $k>0$. Note that $\Gamma(S) \subseteq S$, hence by monotonicity, for all $i$ we have $\Gamma^{i+1}(S) \subseteq \Gamma^i(S)$. But then $\Gamma^{i+k'}(S)=\Gamma^{i}(S)$ for all $k' \leq k$, so $X=\Gamma^{i}(S)$ is a fixed point. 
	\end{proof}

	\subsubsection{Certificates}
	
	Now we identify particularly nice elements of equivalence classes of trees. Recall that our assumption is that $\Pi\in \local(n^{o(1)})$. Let $\mathcal{A}=(\mathcal{A}_n)_n$ be the corresponding local algorithm of complexity $(t_n)_n\in n^{o(1)}$.

	\begin{definition}[Certificates]\label{def:Certificate}
		Let $[{\bf T}]$ be an equivalence class of bipolar trees, $n,k \in \N$.
		An \emph{$(n,k)$-certificate for $[{\bf T}]$} is an element $\mb{U} \in [\mb{T}]$ such that:
		\begin{enumerate}
			\item $|V(U)|\leq k$,
			\item $\alpha_{\mb{U}}:V(U)\rightharpoonup \{1,2,\dots,n\}$ is injective in every $(t_n+1)$-neighborhood (i.e., every set of the form $\mathcal{B}_U(v,t_n+1)$, $v \in V(U)$),
			\item $\mathcal{B}_U(p_\mb{U},5t_n)\cap \dom(\alpha_{\mb{U}})=\mathcal{B}_U(q_\mb{U},5t_n)\cap \dom(\alpha_{\mb{U}})=\emptyset$,
			\item if we evaluate $\mathcal{A}_n$ on all the half-edges incident to vertices $v$ with $\dom \alpha_{\mb{U}} \supseteq B_U(v,t_n)$ using $\alpha_\mb{U}$ as ID's, then we get a partial labeling extending $\varphi_{\mb{U}}$. 
		\end{enumerate} 
	\end{definition}

	The next lemma explains the name ``certificate". Recall that an equivalence class is valid, if the labelings of its elements extend to full $\Pi$-colorings.
	\begin{lemma}\label{lem:Certificate}
		Assume that $[\mb{T}]$ has an $(n,k)$-certificate with $1 \leq k\leq n$, then $[\mb{T}]$ is valid.
	\end{lemma}
	\begin{proof}
		Let $\mb{U} \in [\mb{T}]$ be a certificate, we have to show that $\varphi_{\mb{U}}$ extends to a $\Pi$-coloring. Let $l=|V(U)\setminus \dom(\alpha_{\mb{U}})|$ and observe that by \cref{def:Certificate} (1) and $k \le n$, we have that $l\le n-|\rng(\alpha_{\mb{U}})|$. Hence, there is an $\widehat\alpha_{\mb{U}}:V(U)\to \{1,2,\dots,n\}$ extending $\alpha_{\mb{U}}$ that is injective on $V(U)\setminus \dom(\alpha_{\mb{U}})$ and satisfies
		$$\rng(\widehat\alpha_{\mb{U}} \restriction (V(U)\setminus \dom(\alpha_{\mb{U}})))\cap \rng(\alpha_{\mb{U}})=\emptyset.$$
		Now, using (2) it follows that $\widehat \alpha_{\mb{U}}$ is injective in every $(t_n+1)$-neighborhood. Therefore, $\mathcal{A}_n$ must produce a $\Pi$-coloring $\widehat{\varphi}$ when evaluated using $\widehat{\alpha}$ values as ID's. By (4) we have $\widehat{\varphi} \supseteq \varphi_{\mb{U}}$. 
	\end{proof}
	
	The following claim is immediate from the definition. 
	\begin{claim}
		\label{cl:sym}
		\begin{itemize}
			\item If $\mb{U}$ is an $(n,k)$-certificate for $[\mb{T}]$, then $\bar{\mb{U}}$ is an $(n,k)$-certificate for $[\bar{\mb{T}}]$.
			\item If $\mb{U}$ is a single vertex tree with $\alpha_{\mb{U}}=\varphi_{\mb{U}}=\emptyset$ and $p_\mb{U}=q_{\mb{U}}$, then $\mb{U}$ is an $(n,1)$-certificate for $[\mb{U}]$ for each $n \geq 1$. 
		\end{itemize}
	\end{claim}

	\subsubsection{Existence of certificates}
	Now we show that assuming the trees used in $\mb{H}$'s construction have certificates, we can create certificates for $\mb{H}$ of somewhat bigger size. The naive idea is to just glue the certificates themselves as in the construction of $\mb{H}$. However, this would typically contradict (3) in \cref{def:Certificate}. The trick is to use the pumping lemma \cref{lm:pump} to make a sufficient amount of space.
	
	Recall that $\ell$ is chosen using \cref{lm:pump}, and $(t_n) \in n^{o(1)}$. 
	\begin{theorem} 
		\label{thm:existscertif}
	
		Assume that $k\geq 1$, and $n$ is such that $10t_n \geq 2\ell+1$ and $n\ge 30\Delta t_nk$. Moreover, assume that $(\mb{T}^i_j)_{1\leq i \leq 2\ell+1,2 \leq j\leq \Delta-1}$ is an array of bipolar trees, so that every $[\mb{T}^i_j]$ admits an $(n,k)$-certificate. Then there exists an $\mb{a} \in \Sigma^\Delta$ so that $[\mb{H}_\mb{a}((\mb{T}^i_j)_{1\leq i \leq 2\ell+1,2 \leq j\leq \Delta-1})]$ admits an $(n,30\Delta t_nk)$-certificate.
	\end{theorem}
	
	\begin{proof}
		Fix $(n,k)$-certificates $\mb{U}^i_j$ for $[\mb{T}^i_j]$ for every $1\le i\le 2\ell+1$ and $2\le j\le\Delta-1$. Now, we can use \cref{lm:pump} to find some $m \in \{10t_n,\dots,10t_n+\ell\}$ and
		$$\Lambda:\{1,\dots,2m+1\}\to \{1,\dots,2\ell+1\}$$
		so that for all $\mb{a} \in \Sigma^\Delta$ we have $\mb{H}_{\mb{a}}((\mb{U}^i_j)_{1 \leq i \leq 2\ell+1,2\leq j\leq \Delta-1}) \simm \mb{H}_{\mb{a}}((\mb{U}^{\Lambda(i)}_j)_{1 \leq i \leq 2m+1,2\leq j\leq \Delta-1})$. 
		
		Let $\mb{U}_0$ be the bipolar tree $\mb{H}((\mb{U}^{\Lambda(i)}_j)_{1 \leq i \leq 2m+1,2\leq j\leq \Delta-1})$. 
		Observe that by \cref{def:Certificate} (1) and the choice of $m$ and $t_n$ we have
		\begin{equation}\label{eq:Size1}\tag{$\dagger$}
		|V(U_0)|\le (\Delta-2)(2m+1)k+2m+1\leq (20t_n+2\ell+1)\Delta k \leq 30t_n\Delta k.
		\end{equation}
		
		We will find an $\mb{a}\in \Sigma^\Delta$ and extensions $\widehat{\varphi}_{\mb{U}_0} \supseteq \varphi_{\mb{U}_0}$, $\widehat{\alpha}_{\mb{U}_0} \supseteq \alpha_{\mb{U}_0}$ so that \[\mb{U}=(U_0,(p^{\mb{U}_0},q^{\mb{U}_0}),(\widehat{\varphi}_{\mb{U}_0},\widehat{\alpha}_{\mb{U}_0},\rn_{\mb{U}_0},D_{\mb{U}_0}))\] is a  $(n,30\Delta t_{n}k)$-certificate and $\mb{U} \simm \mb{H}_\mb{a}((\mb{U}^i_j)_{i,j})$, which is sufficient to show the theorem. 
		
		Let $(r^1,\dots,r^{2m+1})$ be the vertices added to $\bigcup_{1\leq i\leq 2m+1, 2\leq j\leq \Delta-2}V(U^i_j)$ in $V(U_0)$. Now observe that by \cref{def:Certificate} (2) and (3),  $\alpha_{\mb{U}_0}$ is injective on $t_{n}+1$-neighborhoods and $\dom(\alpha_{\mb{U}_0}) \cap B_{U_0}(r^{m+1},3t_n+2)=\emptyset$. Hence there is an extension $\widehat{\alpha}_{\mb{U}_0}:\dom(\alpha_{\mb{U}_0}) \cup  B_{U_0}(r^{m+1},t_n) \to \{1,2,\dots,n\}$, that satisfies (2) (note that here we use that $n \geq 30\Delta t_nk \geq |V(U_0)|$).
		     
		Now evaluate $\mathcal{A}_n$ on $r^{m+1}$ with labeling $\widehat{\alpha}_{\mb{U}_0}$, and let $\mb{a}$ be the labeling obtained on $(h^{m+1}_j)_{1\leq j \leq \Delta}$, the half-edges incident to $r^{m+1}$. Let $\widehat{\varphi}_{\mb{U}_0}$ be the extension of $\varphi_{\mb{U}_0}$ by these labels.

		\begin{claim}
			$\mb{U}$ is an $(n,30\Delta t_{n}k)$-certificate for $[\mb{H}_\mb{a}((\mb{U}^i_j)_{i,j})]$. 
		\end{claim}
		\begin{proof}
			First we check that it satisfies (1)-(4) from \cref{def:Certificate}, and then we show $\mb{U} \in [\mb{H}_\mb{a}((\mb{U}^i_j)_{i,j})]$.
			
			(1) holds by the inequality \eqref{eq:Size1}.
			
			(2) and (4) hold by the definition of $\alpha_{\mb{U}}=\widehat{\alpha}_{\mb{U}_0}$ and $\varphi_{\mb{U}}=\widehat{\varphi}_{\mb{U}_0}$.
			
			(3) can be shown as follows. Observe that as (3) holds for all $\mb{U}^i_j$, we have $B_U(r^i,5t_n) \cap \dom(\alpha_{\mb{U}^i_j})\subseteq B_{U^i_j}(p_{\mb{U}^i_j},5t_n) \cap \dom(\alpha_{\mb{U}^i_j}) =\emptyset$. Using this, and the fact that by $m\geq 10t_n$ we have $$\mathcal{B}_{U}(r^1,5t_n)\cap \mathcal{B}_{U}(r^{m+1},t_n)=\mathcal{B}_{U}(r^{2m+1},5t_n)\cap \mathcal{B}_{U}(r^{m+1},t_n)=\emptyset,$$ we get $B_U(p_{\mb{U}},5t_n) \cap \dom(\alpha_{\mb{U}})=B_U(q_{\mb{U}},5t_n) \cap \dom(\alpha_{\mb{U}})=\emptyset$.
			
			Finally, \[\mb{H}_\mb{a}((\mb{U}^i_j)_{i,j}) \simm \mb{H}_\mb{a}((\mb{U}^{\Lambda(i)}_j)_{i,j}) \simm \mb{U},\]
			where the first equivalence holds by the choice of $\Lambda$, while the second holds by \cref{cl:basic} and the fact that $\varphi_{\mb{H}_\mb{a}((\mb{U}^{\Lambda(i)}_j)_{i,j})} =\varphi_\mb{U}$. 
		\end{proof}
		This finishes the proof of the theorem. 
	\end{proof}
	\begin{corollary} 
		\label{cor:existscertif}
		Assume $k \geq 1$ and $n$ is such that $10t_n \geq 2\ell+1$ and $n \geq 30\Delta t_nk$. Let $S \neq \emptyset$ be a collection of equivalence classes of bipolar trees and assume that every $[\mb{T}] \in S$ admits an $(n,k)$-certificate. Then there exists an $f:S^{\sizeofarray} \to \Sigma^\Delta$ so that every element of $[\Gamma_{\ell,f}](S)$ admits an $(n,30\Delta t_nk)$-certificate.
	\end{corollary}
	\begin{proof}
		Let $[\mb{T}^i_j]_{i,j} \in S^{\sizeofarray}$ be any array. We define $f([\mb{T}^i_j]_{i,j})=\mb{a}$, where $\mb{a} \in \Sigma^\Delta$ is provided by \cref{thm:existscertif}. By the definition of $f$, we have that  $[\mb{H}_\mb{a}((\mb{T}^i_j)_{i,j})]=[\mb{H}_f((\mb{T}^i_j)_{i,j})]$ admits an $(n,30\Delta t_nk)$-certificate. Now, combining the facts that every $[\mb{T}] \in [\Gamma_{\ell,f}](S)$, arises in a form of $\mb{H}_f$ or $\bar{\mb{H}}_f$, and that if $\mb{U}$ is a certificate for $[\mb{T}]$ then $\bar{\mb{U}}$ is one for $[\bar{\mb{T}}]$, we are done.
	\end{proof}
	\subsubsection{Proof of \cref{thm:MainConstruction}}
	\label{s:finalconstruct}
	
	We are ready to finish the proof of our fixed point theorem. Let $N$ be the number of $\simm$ equivalence classes, and $n$ be a number with $10t_n \geq 2\ell+1$ and $(30\Delta t_n)^N<n$. Such a number exists by the assumption $(t_n) \in n^{o(1)}$. Define 	
	\[X_{m}=\{[\mb{T}]:[\mb{T}] \text{ admits an $(n,(30\Delta t_n)^m)$-certificate}\}.\] Clearly, the sets $(X_{m})_{m \in \N}$ are increasing and $X_0 \neq \emptyset$ by \cref{cl:sym}. Since $(30\Delta t_n)^N<n$, there is some $m_0 < N$ so that $X_{m_0}=X_{m_0+1}$. Let $X'=X_{m_0}$ and use \cref{cor:existscertif} to find a function $f:X'^{\sizeofarray} \to \Sigma^\Delta$, so that every class in $[\Gamma_{\ell,f}](X')$ admits a $(n,({30\Delta t_n})^{m_0+1})$-certificate. Then $[\Gamma_{\ell,f}](X') \subseteq X_{m_0+1}=X_{m_0}=X'$. Using this, and  using \cref{cl:well-defd} we get that $[\Gamma_{\ell,f}]: \mathcal{P}(X') \setminus \emptyset \to  \mathcal{P}(X') \setminus \emptyset$ is monotone, hence by \cref{pr:UniformBound0}, there is an $X \subseteq X'$ non-empty fixed point. 
	
	In order to see that every equivalence class in $X$ is valid, note that each of those has a $(n,(30\Delta t_n)^{m_0})$-certificate, and by the choice of $n$ we have $n \geq (30\Delta t_n)^{m_0}$, which shows validity by \cref{lem:Certificate}.

	\subsection{The $\ell_\Pi$-full condition from \cref{thm:MainConstruction}}
	
	In this section we show that the $\ell_\Pi$-full condition holds for $\Pi$.

	\subsubsection{Observations on $\ell$-fullness}
	\label{ss:observ}
	Let us start with some easy observations about $\ell$-fullness, which will point towards the general direction of our argument. If $\varphi$ is a function with finite range, we will denote by $\overline{ran}(\varphi)$ the multiset determined by $\varphi$, that is, every element is enumerated as many times as the size of its $\varphi$ preimage. If $\mb{a}\in \Sigma^\Delta$, we will denote by $\overline{\mb{a}}$ the multiset $\overline{ran}(\mb{a})$.

	\begin{definition}
		Let $\mb{a}=(a_1,\dots,a_\Delta),\mb{b}=(b_1,\dots,b_\Delta) \in \Sigma^\Delta$, $G, H \subseteq \{1,\dots,\Delta\}$ and $S \subseteq \N$. We say that \emph{$(\mb{a},\mb{b})$ is realizable with length in $S$, using $G$ and $H$}, if for all $i \in G$, $j \in H$ and $l \in S$ there is a $\Pi$-coloring $\varphi$ of the spiky path $P$ of length $l$, so that
		
		\begin{itemize}
			\item if $g$ and $h$ are the starting and end half-edges of $P$, then $\varphi(g)=a_i$, $\varphi(h)=b_j$,
			\item if $v$ and $w$ are the start and end vertices of $P$, then $\overline{\bf a}=\overline{ran}(\varphi\upharpoonright N(v))$ and $\overline{\bf b}=\overline{ran}(\varphi\upharpoonright N(w))$.
		\end{itemize}
		In symbols, \realized{\mb{a}}{\mb{b}}{S}{G}{H}. 
		
		If $\mathcal{V}'$ is a collection of multisets, and in addition to the above, for all vertices $v$ of $P$ we have $\overline{ran}(\varphi\upharpoonright N(v)) \in \mathcal{V}'$, we say that this realization is possible in $\mathcal{V}'$, in symbols \realized{\mb{a}}{\mb{b}}{S}{G}{H} in $\mathcal{V}'$.
		
	\end{definition}
	Note that \realized{\mb{a}}{\mb{b}}{S}{G}{H} in $\mathcal{V}'$ implies that $\mb{a},\mb{b} \in \mathcal{V}'$. When using this notation, if it is clear from the context, we will write $\geq n$ for $\{l \in \N: l \geq n\}$. 
	
	Now, in the above terminology a set $\mathcal{V}'$ being $n$-full is equivalent to $\forall \mb{a},\mb{b} \in \Sigma^\Delta$ with $\overline{\mb{a}},\overline{\mb{b}} \in \mathcal{V}'$ we have \realized{\mb{a}}{\mb{b}}{\geq n}{\{1,\dots,\Delta\}}{\{1,\dots,\Delta\}} in $\mathcal{V}'$. Our goal is to find sufficient conditions for this to hold.
	\begin{claim}
		\label{cl:pathglue}
		Assume that $l,k>1$, $i \neq j \in \{1,\dots,\Delta\}$, \realized{\mb{a}}{\mb{b}}{l}{G}{\{i\}} in $\mathcal{V}'$ and \realized{\mb{b}}{\mb{c}}{k}{\{j\}}{H} in $\mathcal{V}'$. Then \realized{\mb{a}}{\mb{c}}{l+k-1}{G}{H} in $\mathcal{V}'$. 
	\end{claim}
	\begin{proof}
		From $\Pi$-colorings $\varphi$ and $\psi$ of paths of length $l$ and $k$ we can create a $\Pi$-coloring of the path of length $l+k-1$, by gluing together the paths' end and starting vertices colored by $\mb{b}$: note that by the assumption $i \neq j$ this is well defined. 
	\end{proof}
	
	\begin{claim}
		\label{cl:pathglue2}
		Assume that $n_0>1$ and \realized{\mb{a}}{\mb{b}}{n_0,n_0+1}{\{1,\Delta\}}{\{1,\Delta\}} in $\mathcal{V}'$. Then 
		
		\begin{enumerate}
			\item \realized{\mb{b}}{\mb{b}}{2n_0-1,2n_0}{\{1,\Delta\}}{\{1,\Delta\}} in $\mathcal{V}'$. 
			\item \realized{\mb{b}}{\mb{b}}{\geq 4n^2_0}{\{1,\Delta\}}{\{1,\Delta\}} in $\mathcal{V}'$.
			\item \realized{\mb{a}}{\mb{b}}{\geq 5n^2_0}{\{1,\Delta\}}{\{1,\Delta\}} in $\mathcal{V}'$.
		\end{enumerate} 
	\end{claim}
	\begin{proof}
		\begin{enumerate}
			\item Observe that, by $P$ being symmetric, \realized{\mb{a}}{\mb{b}}{n_0}{\{1,\Delta\}}{\{1,\Delta\}} implies \realized{\mb{b}}{\mb{a}}{n_0}{\{1,\Delta\}}{\{1,\Delta\}}. Now the claim follows form \cref{cl:pathglue}.
			\item Inductively on $k',k'' \in \N$ we show that \realized{\mb{b}}{\mb{b}}{k'(2n_0-2)+k''(2n_0-1)+1}{\{1,\Delta\}}{\{1,\Delta\}}. Indeed, using the first statement and \cref{cl:pathglue}, for $k'$ it is enough to notice that 
			\[k'(2n_0-2)+k''(2n_0-1)+1+2n_0-1-1=(k'+1)(2n_0-2)+k''(2n_0-1)+1,\]
			and the analogous calculation allows to increase $k''$ by $1$ as well\footnote{Note that just for this calculation, it would me more convenient to use the number of edges for the length of a path, but for other calculations the number of vertices works better.}. 
			
			Now, by classical estimates on the Frobenius number \cite{sylvester1882subvariants}, every number \[\ge (2n_0-2)(2n_0-1)-(2n_0-2)-(2n_0-1)+1\] can be expressed in the form $k'(2n_0-2)+k''(2n_0-1)+1$, with $k', k'' \in \N$, showing in particular our estimate. 
			\item Now, using \cref{cl:pathglue} again and $4n_0^2+n_0 \le 5n_0^2$, yields our estimate.
		\end{enumerate}
	\end{proof}
	
	\begin{proposition}
		\label{pr:pathgluefinal}
		Assume that there are some $\emptyset \neq\mathcal{V}^* \subseteq \mathcal{V}'$ and $n_0,n_1$ such that
		\begin{enumerate}
			\item for all $\mb{a},\mb{b}$ with $\overline{\mb{a}},\overline{\mb{b}} \in \mathcal{V}^*$ we have \realized{\mb{a}}{\mb{b}}{n_0,n_0+1}{\{1,\Delta\}}{\{1,\Delta\}} in $\mathcal{V}'$,
			\item for all $\mb{c}$ with $\overline{\mb{c}} \in \mathcal{V}'$ and $i \in \{1,\dots,\Delta\}$ there are $\mb{a}$ with $\overline{\mb{a}} \in \mathcal{V}^*$, and $i' \in \{1,\dots,\Delta\}$ with  \realized{\mb{c}}{\mb{a}}{n'}{\{i\}}{\{i'\}} in $\mathcal{V}'$ for some $n' \leq n_1$.
			
		\end{enumerate}
		Then $\mathcal{V}'$ is $(5n^2_0+2n_1)$-full.
		
	\end{proposition}
	\begin{proof}
		As mentioned before \cref{cl:pathglue}, we have to show that for all $\mb{c}$, $\mb{d}$ with $\overline{\mb{c}},\overline{\mb{d}} \in \mathcal{V}'$ we have \realized{\mb{c}}{\mb{d}}{\geq \ell}{\{1,\dots,\Delta\}}{\{1,\dots,\Delta\}} in $\mathcal{V}'$, where $\ell=5n^2_0+2n_1$. To show this, pick any $i,j \in \{1,\dots,\Delta\}$. Then, by our assumptions there are $\mb{a},\mb{b}$ with $\overline{\mb{a}},\overline{\mb{b}} \in \mathcal{V}^*$, $i',j' \in \{1,\dots,\Delta\}$ with \realized{\mb{c}}{\mb{a}}{n'}{\{i\}}{\{i'\}} and \realized{\mb{b}}{\mb{d}}{n''}{\{j'\}}{\{j\}} in $\mathcal{V}'$, where $n',n'' \leq n_1$. By the first assumption and \cref{cl:pathglue2} we have \realized{\mb{a}}{\mb{b}}{\geq 5n^2_0}{\{1,\Delta\}}{\{1,\Delta\}} in $\mathcal{V}'$. 
		
		Finally, we have \realized{\mb{a}}{\mb{b}}{\ge\ell-n'-n''}{\{1,\Delta\}}{\{1,\Delta\}} in $\mathcal{V}'$, and in particular \realized{\mb{a}}{\mb{b}}{\ge\ell-n'-n''+2}{\{i''\}}{\{j''\}} in $\mathcal{V}'$  with some $i'' \in \{1,\Delta\}\setminus \{i'\}$ and $j'' \in \{1,\Delta\}\setminus \{j'\}$. Putting these relations together, by \cref{cl:pathglue} we have \realized{\mb{c}}{\mb{d}}{\ge\ell}{\{i\}}{\{j\}} in $\mathcal{V}'$, as desired.
	\end{proof}

	\subsubsection{Finding the $\ell_\Pi$-full set}

	In this section, we use \cref{thm:MainConstruction} to show that $\Pi$ admits an $\ell_\Pi$-full set $\mathcal{V}'\subseteq \mathcal{V}$ for some $\ell_\Pi>0$. In order to do that, we fix $\ell>0$, $X$ and $f$ as in \cref{thm:MainConstruction}. 
	
	\begin{definition}[The family $W_i$]
		\label{def:famW}
		Set $W_0=\{\mb{T}: [{\bf T}] \in X, D_{\bf{T}}=\emptyset, \rn_{{\bf T}}\equiv 0\}$ and define $W_{m}=\Gamma_{\ell,f}^m(W_0)$.

	\end{definition}
	
	Note that this is well defined, as $\Gamma_{\ell,f}(W)$ makes sense, whenever $[W]^{\sizeofarray} \subseteq \dom(f)$, and here $[W_i]=[W_0] \subseteq \dom(f)$ by \cref{thm:MainConstruction}. Note also that the set $W_0$ is nonempty, since $\simm$ does not depend on $\rn_{\mb{T}}$ and $D_{\mb{T}}$ and $X \neq \emptyset$. 
	
	\medskip

	We start with the following immediate observations that follow directly from the construction and inductive definition of rank.
	
	\begin{claim}\label{cl:ObservationOne}
		Let $m\in \mathbb{N}$, $\mb{T}\in W_m$ and $1\le k\le m$.
		
		\begin{enumerate}
			\item The set of vertices of rank $k$ form a disjoint collection of paths each of length exactly $2\ell+1$ and each containing a unique vertex from $D_{\bf T}$ as one of its middle point.
			\item If $\{v,w\}\in E(T)$ and $\rn_{\bf T}(v)=k$, then $\rn_{\bf T}(w)\in \{k-1,k,k+1\}$.
			\item If $\rn_{\bf T}(v)=k>1$ then there is a $w$ with $\{v,w\}\in E(T)$ and $\rn_{\bf T}(w)=k-1$. 
		\end{enumerate}
		
	\end{claim}
	
	\begin{definition}[$P_{\mb{T}}$]
		An edge $\{v,w\}$ of a bipolar tree $\mb{T}$ is called \emph{negative}, if $v\in D_{\mb{T}}$, $\rn_{\mb{T}}(v)=\rn_{\mb{T}}(w)$, and $w$ is not on the path connecting $v$ to the positive pole $p_{\mb{T}}$. Let $P_{\mb{T}}$ be the collection of vertices $v$ so that the path from $v$ to $p_{\mb{T}}$ contains no negative edges. 
	\end{definition}
	
	The next lemma shows the importance of this collection of vertices, they, in particular avoid vertices with virtual edges.
	
	\begin{lemma}\label{lm:NoVirtue}
		Let $m \in \N$, $\mb{T}\in W_m$. Then for any $v \in P_{\mb{T}}$ with $\rn_{\mb{T}}(v)>0$ we have
		\begin{itemize}
			\item  $\Nv(v)=\emptyset$ or
			\item $v=p_{\mb{T}}$.
		\end{itemize} 
		In particular, if $0<\rn_{\mb{T}}(v)<m$, then $\Nv(v)=\emptyset$.
	\end{lemma}
	\begin{proof}	
		
		We show this by induction on $m$. If $m=0$, then there is nothing to prove. Now assume that statement for $m$ and that $\mb{T}\in W_{m+1}$. If $0<\rn_{\mb{T}}(v)<m+1$, then $v$ is a vertex of some ${\bf T}^i_j\in W_{m}$ with rank $<m+1$. Observe that the path from $v$ to the positive pole of $\mb{T}$ must go through the positive pole of $\mb{T}^i_j$ and $D_{\mb{T}^i_j}\subset D_{\mb{T}}$, so $v \in P_{\mb{T}^i_j}$ holds as well. Thus, by the inductive hypothesis, it must satisfy one of the two options above in ${\bf T}^i_j$. Now, in the first case we are done, while in the second case, i.e., when $v=p_{\mb{T}^i_j}$, then $\Nv(v)=\emptyset$ will hold in $\mb{T}$ by the gluing construction \cref{def:kglue}.
		
		Finally, if $\rn_{\mb{T}}(v)=m+1$, $v \neq p_{\mb{T}}$, and $\Nv(v) \neq \emptyset$, then $v=q_{\mb{T}}$. But then the path from $v$ to $p_{\mb{T}}$ contains the middle $r^{\ell+1} \in D_{\mb{T}}$ and the negative edge belonging to it.
	\end{proof}

	The next lemma says that we can reach a point in $D_\mb{T}$ from essentially any point using a path that only goes through vertices of essentially the same rank.
	
	\begin{lemma}\label{lm:PathToDesign}
		Let $m \in \N$, ${\bf T}\in W_m$. Assume that $2\le k< m$, $v \in P_{\mb{T}}$ with $\rn_{\bf T}(v)=k$, $h\in N^H(v)$ and 
		\begin{enumerate}
			\item $v \not \in D_{\mb{T}}$ or
			\item $v \in D_{\mb{T}}$ and $h$ is not contained in a negative edge.
		\end{enumerate}
		Then there is a $w\in D_{\bf T}$, $2 \le l\le \ell+3$ and a path
		$P=(v=v_1,\dots,v_l=w)$ in $\mb{T}$ of vertices in $P_{\mb{T}}$ that starts at $h$ and $\rn_{\bf T}(v_j)\in \{k-1,k,k+1\}$ for every $1\le j\le l$.
	\end{lemma}
	\begin{proof}
		By \cref{lm:NoVirtue}, $\Nv(v)=\emptyset$, so there is some $v_2\in V(T)$ such that $h$ belongs to $\{v_1,v_2\}\in E(T)$. Observe that for any $v \in P_{\mb{T}} \setminus D_{\mb{T}}$ we have $N^{H}(v) \subseteq P_{\mb{T}}$, therefore, in either case of the lemma, $v_2 \in P_{\mb{T}}$ holds as well.
		
		Now, if $v_2 \in D_{\mb{T}}$ then we are done by \cref{cl:ObservationOne} (2), so assume that this is not the case. 
		
		By \cref{cl:ObservationOne} every vertex $u \in V(T)$ with $\rn_{\mb{T}}(u) \geq 1$ belongs to a path of length $2\ell+1$ consisting of vertices with rank equal to $\rn_{\mb{T}}(u)$, and 
		having a vertex from $D_\mb{T}$. Let us denote the subpath of this path connecting $u$ to the element of $D_{\mb{T}}$ by $P_u$. Note that if $u \in P_{\mb{T}}$, then every element of $P_u$ is in $P_{\mb{T}}$ as well. 
		
		To see the statement, note that by \cref{cl:ObservationOne} we have $|\rn_{\mb{T}}(v_2) -\rn_{\mb{T}}(v_1)| \leq 1$, hence $\rn_{\mb{T}}(v_2) \geq 1$. Thus, if $\rn_{\mb{T}}(v_2) \neq \rn_{\mb{T}}(v_1)$, we can let $P$ be the concatenation of $v_1$ and $P_{v_2}$. Finally, if $\rn_{\mb{T}}(v_2)=\rn_{\mb{T}}(v_1)$, by \cref{cl:ObservationOne} and our assumption on $v_2$ (i.e., that $v_2 \not \in D_\mb{T}$) we can find a $v_3 \in P_{\mb{T}}$ with $(v_2,v_3) \in E(T)$ and $1 \leq \rn_{\mb{T}}(v_3)=\rn_{\mb{T}}(v_2)-1$, so letting $P$ be the concatenation of $(v_1,v_2)$ and $P_{v_3}$ works by $v_3 \in P_{\mb{T}}$. 
	\end{proof}

	\begin{definition}
		For $i \leq m$, let \[\mathcal{V}_{i,m}=\{\alpha \in \mathcal{V}:\exists \mb{T} \in W_m \ \exists v \in P_{\mb{T}} \ \exists \widehat{\varphi}_\mb{T}\supseteq \varphi_\mb{T} \] \[ \widehat{\varphi}_\mb{T} \text{ is a $\Pi$-coloring, $\rn_{\mb{T}}(v)=i$, and $\alpha=\overline{ran}(\varphi_{\mb{T}}\restriction N^{H}(v))$} \},\]
		let \[\mathbb{V}=ran(f\restriction [W_0]^{\sizeofarray}),\]
		and \[\mathcal{V}^*=\{\overline{\alpha}:\alpha \in \mathbb{V}\}.\]
	\end{definition}
	
	\begin{claim}
		\label{cl:subset}
		Assume that $0 <i \le m$, $\mb{a}=(a_1,\dots,a_\Delta) \in \mathbb{V}$ and $1 \leq j<\Delta$. Then there exists a tree $\mb{T} \in W_m$, a vertex $v \in D_{\mb{T}} \cap P_{\mb{T}}$ with $\rn_{\mb{T}}(v)=i$ so that $\varphi_{\mb{T}}(h)=a_j$ holds for some $h \in \Nr(v)$ not belonging to a negative edge and $\overline{ran}(\varphi_\mb{T} \restriction N^H(v))=\bar{a}$. 
		
		In particular, $\mathcal{V}^* \subseteq \mathcal{V}_{i,m}$ for all $0<i \leq m$. 
	\end{claim}
	\begin{proof}
		
		Since $[W_0]=[W_m]$ for all $m$, we have $ran(f\restriction [W_0]^{\sizeofarray})=ran(f\restriction [W_m]^{\sizeofarray})$ as well. Therefore, for $i=m$, the statement follows from the construction of the elements of $W_m$.  
		
		Now we show the rest of the statement by induction on $m$. The case $m=i=1$ has been already handled. Now, assume that we have shown the claim for $m$, $i<m+1$, and $\mb{a}$ is such that $\mb{a}\in \mathbb{V}$. By the inductive hypothesis, we can find a tree $\mb{T}' \in W_m$ together with a vertex $v \in D_{\mb{T}'} \cap P_{\mb{T}'}$ as above. Now, let $(\mb{T}^i_j)_{i,j} \in W^{\sizeofarray}_m$ be an arbitrary array with $\mb{T}^1_2=\mb{T}'$, and $\mb{T}=\mb{H}_{f}((\mb{T}^i_j)_{i,j})$. Then $v \in P_{\mb{T}} \cap D_{\mb{T}}$. Note that the negative edge in $\mb{T}$ adjacent to $v$ is also negative in $\mb{T}'$ by the fact the path from $v$ to $p_{\mb{T}}$ goes through $p_{\mb{T}'}$. Hence, $\varphi_{\mb{T}}(h)=a_j$ must hold on some $h$ not belonging to such an edge.
	\end{proof}

	\begin{lemma}
		\label{lm:DesignFlex0}
		Let ${\bf a}=(a_1,\dots,a_\Delta)\in \mathbb{V}$, $p \in \{1,\Delta\}$ and $0<k$. There are a tree ${\bf T}_{{\bf a},p} \in W_k$ and a vertex $v \in D_{\mb{T}_{\mb{a},p}} \cap P_{{\mb{T}_{\mb{a},p}}}$ such that
		\begin{enumerate}
			\item $\rn_{{\bf T}_{{\bf a},p}}(v)=k$,
			\item $\overline{\bf a}=\overline{ran}(\varphi_{\mb{T}_{{\bf a},p}}\upharpoonright N^H(v))$,
			\item the path $Q_v$ from $v$ to the positive pole of ${\bf T}_{{\bf a},p}$ has length $\ell+1$, and contains vertices of rank $k$ (necessarily from $P_{{\mb{T}_{\mb{a},p}}}$),
			\item if $Q_v$ starts with $h$, then $\varphi(h)=a_p$. 
		\end{enumerate}    
		
	\end{lemma}
	
	\begin{proof}
		By definition, there is a sequence $(\mb{T}^i_j)_{i,j}\in W_{k-1}^{\sizeofarray}$, such that the output of $f$ on $({\bf T}^i_j)_{i,j}$ is $\mb{a}$. 
		
		For $p=1$, let $\mb{T}_{\mb{a},p}=\mb{H}_f((\mb{T}^i_j)_{i,j})$, and for $p=\Delta$, let $\mb{T}_{\mb{a},p}=\bar{\mb{H}}_f((\mb{T}^i_j)_{i,j})$. In both cases, let $v=r^{\ell+1}$, that is, the middle vertex. Now, all the conditions of the lemma hold by the definition of $\mb{H}_f((\mb{T}^i_j)_{i,j})$. 
	\end{proof}
	The next lemma will be a partial step towards establishing $\ell$-fullness, saying that certain distances between configurations around distinguished vertices can be realized. 
	
	\begin{lemma}\label{lm:DesignIsFlexible}
		There exists $n_0>0$ so that for all ${\bf a}=(a_1,\dots,a_\Delta)\in \mathbb{V}$, ${\bf b}=(b_1,\dots,b_\Delta)\in \mathbb{V}$, $0<k$, $p,q\in \{1,\Delta\}$,
		and for each $l \in \{0,1\}$ there are a tree ${\bf T}^l_{{\bf a},{\bf b},p,q} \in W_{k+1}$, and $v,w\in D_{{\bf T}^l_{{\bf a},{\bf b},p,q}} \cap P_{{\bf T}^l_{{\bf a},{\bf b},p,q}}$ such that
		\begin{enumerate}
			\item $\rn_{{\bf T}^l_{{\bf a},{\bf b},p,q}}(v)=\rn_{{\bf T}^l_{{\bf a},{\bf b},p,q}}(w)=k$,
			\item $\overline{\bf a}=\overline{ran}(\varphi_{{\bf T}^l_{{\bf a},{\bf b},p,q}}\upharpoonright N^H(v))$ and $\overline{\bf b}=\overline{ran}(\varphi_{{\bf T}^l_{{\bf a},{\bf b},p,q}}\upharpoonright N^H(w))$,
			\item the (unique) path $P=(v=v_1,\dots,v_j=w)$ has length exactly $n_0+l$, $\rn_{{\bf T}^l_{{\bf a},{\bf b},p,q}}(v_i)\in \{k,k+1\}$ and $v_i \in P_{{\bf T}^l_{{\bf a},{\bf b},p,q}}$ for every $1\le i\le j$,
			\item if $h\in N^H(v)$ and $g\in N^H(w)$ are such that $P$ starts at $h$ and ends at $g$, then $\varphi(h)=a_p$ and $\varphi(g)=b_q$. 
		\end{enumerate}    
	\end{lemma}
	\begin{proof}
		
		Let us fix trees for $\mb{a},\mb{b}$ as in \cref{lm:DesignFlex0}, ${\bf T}_{{\bf a},p}, {\bf T}_{{\bf b},q} \in W_k$ together with vertices $v,w$. Let $n_0=2\ell+4$. We construct an element ${\bf T}^l_{{\bf a},{\bf b},p,q}$ to be of the form ${\bf H}_f$ by considering any array $(\mb{T}^i_j)_{i,j} \in W^{\sizeofarray}_{k}$ with the property that
		$$\mb{T}^1_2={\bf T}_{{\bf a},p} \text{ and }
		\mb{T}^{l+2}_2={\bf T}_{{\bf b},q},$$
		and let $v$ and $w$ be the corresponding copies from  ${\bf T}_{{\bf a},p}$ and ${\bf T}_{{\bf b},q}$. 
		Let us check that (1)-(4) holds for ${\bf T}^l_{{\bf a},{\bf b},p,q}$ and $v,w$. Indeed, the first three conditions are clear from its definition and the corresponding clauses of \cref{lm:DesignFlex0}. Observe also that the path $P=(v=v_1,\dots,v_{n_0+l}=w)$ that connects $v$ and $w$ in ${\bf T}'$ has the form $Q_v$ concatenated with $(r^1,\dots,r^{l+2})$ concatenated with the reverse of $Q_w$, hence (4) holds as well by \cref{lm:DesignFlex0}.
	\end{proof}
	
	This construction allows us to deduce the following. Recall the notation from \cref{ss:observ}.
	
	\begin{proposition}
		\label{pr:vworks}
		There exists an $n_0>0$ such that for all $1\leq k<m$, $\mb{a},\mb{b} \in \mathbb{V}$, $l \in \{0,1\}$, and $p,q \in \{1,\Delta\}$ there are a tree ${\bf T}^l_{{\bf a},{\bf b},p,q}$, and $v,w\in D_{{\bf T}^l_{{\bf a},{\bf b},p,q}} \cap P_{{\bf T}^l_{{\bf a},{\bf b},p,q}}$ satisfying (1)-(4) from \cref{lm:DesignIsFlexible}, but with ${\bf T}^l_{{\bf a},{\bf b},p,q} \in W_{m}$. Consequently, for all $\mb{a}, \mb{b} \in \mathbb{V}$ we have \realized{\mb{a}}{\mb{b}}{n_0,n_{0}+1 }{\{1,\Delta\}}{\{1,\Delta\}} in $\mathcal{V}_{k,m}\cup \mathcal{V}_{k+1,m}$.
	\end{proposition}
	
	\begin{proof}
		Note that for $m=k+1$, we have already shown the first statement, form which the ``consequently" part follows: indeed, by \cref{lm:DesignIsFlexible}, and the fact that the partial labeling of a tree in $W_{k+1}$ extend to a full $\Pi$-coloring, using the paths (with half-edges) guaranteed by (3) of \cref{lm:DesignIsFlexible} we get \realized{\mb{a}}{\mb{b}}{n_0,n_{0}+1 }{\{1,\Delta\}}{\{1,\Delta\}}. To see that the coloring is actually in $\mathcal{V}_{k,m}\cup \mathcal{V}_{k+1,m}$, note that the rank of the intermediate vertices is in $\{k,k+1\}$ as well, so this follows from the definition of $\mathcal{V}_{k,m}$. 
		
		We show the remaining cases by induction on $m$, the $m=2$ case is done. Now, given $\mb{T}^l_{\mb{a},\mb{b},p,q} \in W_m$, we construct $\mb{T}^{l'}_{\mb{a},\mb{b},p,q} \in W_{m+1}$, using any array $(\mb{T}^i_j)_{i,j}$, with $\mb{T}^1_2=\mb{T}^l_{\mb{a},\mb{b},p,q}$. This clearly works. 
	\end{proof}
	
	\begin{proposition}
		\label{pr:v'works}
		Assume that $1\le k<m-1$. There exists a $n_1>0$ so that for all $\mb{c}=(c_1,\dots,c_\Delta)$ with $\overline{\mb{c}} \in \mathcal{V}_{k+1,m}$ and for all $i \in \{1,\dots,\Delta\}$ there is an $\mb{a}$ with $\overline{\mb{a}} \in \mathcal{V}^*$ and an $i' \in \{1,\dots,\Delta\}$ 
		so that \realized{\mb{c}}{\mb{a}}{n'}{\{i\}}{\{i'\}} in $\mathcal{V}_{k,m}\cup \mathcal{V}_{k+1,m} \cup \mathcal{V}_{k+2,m}$ for some $n' \leq n_1$. 
	\end{proposition}
	\begin{proof}
		
		Let $n_1=\max \{n_0,\ell+3\}$. If $\mb{c} \in \mathbb{V}$ and $i=\Delta$ then the claim follows from \cref{pr:vworks}, so assume that this is not the case. We claim that there exists some tree $\mb{T} \in W_m$, $v \in P_{\mb{T}}$ with $\rn_{\mb{T}}(v)=k+1$ and a $\Pi$-coloring $\widehat{\varphi}_{\mb{T}}\supseteq \varphi_\mb{T}$ so that $\bar{\mb{c}}=\overline{ran}(\widehat{\varphi}_\mb{T} \restriction N^H(v))$ and $c_i=\widehat{\varphi}_\mb{T}(h)$, where $h \in \Nr(v)$ is a half-edge not belonging to a negative edge: indeed, for $\mb{c} \in \mathbb{V}$ this follows from \cref{cl:subset}, and otherwise, this follows from the definition of $\mathcal{V}_{k+1,m}$  and the fact that negative edges are not adjacent to vertices $\not \in D_{\mb{T}}$. 
		
		By \cref{lm:PathToDesign} there is a path of length $n'\leq \ell+3$ that connects $v$ with $w\in D_{\bf T}$, only using vertices from $P_{\mb{T}}$ of rank in the set $\{k,k+1,k+2\}$. Note also that by definition $\overline{ran}(\widehat{\varphi}_\mb{T} \restriction N^H(w)) \in \mathcal{V}^*$. Hence, by \cref{pr:LargeM}, considering $P$ as spiky path together with $\widehat{\varphi}_\mb{T}$, it witnesses  \realized{\mb{c}}{\mb{a}}{n'}{\{i\}}{\{i'\}} in $\mathcal{V}_{k,m}\cup \mathcal{V}_{k+1,m} \cup \mathcal{V}_{k+2,m}$ for some $i' \in \{1,\dots,\Delta\}$.
	\end{proof}

	\begin{proposition}\label{pr:LargeM}
		Let $0<m\in \mathbb{N}$ and $1\le k< m$.
		Then we have $\mathcal{V}_{k,m}\subseteq \mathcal{V}_{k+1,m}$. In particular, there exist $M\ge 3$ and $1\le k_0\le M-2$ so that $\emptyset \neq \mathcal{V}_{k_0,M}=\mathcal{V}_{k_0+1,M}=\mathcal{V}_{k_0+2,M}$.
	\end{proposition}
	\begin{proof}
		
		For the first assertion, if $m=1$ then there is nothing to prove, so we assume $m \ge 2$. Assume that $\bar{a}\in \mathcal{V}_{k,m}$ is witnessed by an extension $\widehat{\varphi}_{\mb{T}}$ of the partial labeling $\varphi_\mb{T}$ and $v\in V(T)$ such that $\rn_{\bf T}(v)=k$, where ${\bf T}\in W_m$. Our task is to find another tree ${\bf S}\in W_{m}$ and a vertex $u \in P_{\mb{S}}$ with $\rn_{\mb{S}}(u)=k+1$, such that there is an extension of $\varphi_{\mb{S}}$ so that around $u$ we have the label multi-set $\bar{a}$.

		We modify ${\bf T}$ to ${\bf S}$ as follows. By definition $\mb{T}$ arose from an array $(\mb{T}^i_j) \in W_{m-1}^{\sizeofarray}$. Then $v$ is contained in exactly one of these trees, say, ${\bf T'}$.  By \cref{def:famW}, each connected component of $\mb{T}'$ formed by half-edges incident to rank $0$ vertices is a tree from $W_0$. To each such tree $\mb{U}$, fix some $\mb{U}' \in W_1$ with $\mb{U}' \simm \mb{U}$: note that this is possible, as by \cref{thm:MainConstruction} $[\Gamma_{\ell,f}]([W_0])=[W_1]$. To obtain $\mb{S}$, replace each $\mb{U}$ by $\mb{U}'$ in $\mb{T}'$ using the corresponding decorations $\mathfrak{d}_{\mb{U}'}$, and let $\rn_{\mb{S}}(w)=\rn_{\mb{T}'}(w)+1$, for every $w \in V(T')$ with $\rn_{\mb{T}'}(w)>0$. Finally, let $D_{\mb{S}}=D_{\mb{T'}} \cup \bigcup_{\mb{U}} D_{\mb{U}'}$. By \cref{cl:replace} for any extension of $\varphi_{\mb{T}'}$ to a $\Pi$-coloring, there exists a $\Pi$-coloring of $\mb{S}$ so that the two maps coincide on each $w$ with $\rn_{\mb{T}'}(w)>0$. In particular, this holds for $\widehat{\varphi}_{\mb{T}} \restriction \mb{T}'$, and, since $\rn_{\mb{S}}(v)=k+1$ and $v \in P_{\mb{T}'}$ by construction, this witnesses $\bar{a} \in \mathcal{V}_{k+1,m}$.

		To see the second statement, note that the sequence $\{\mathcal{V}_{k,m}\}_{k=1}^m$ is increasing and $\mathcal{V}$ is finite, hence, it follows from the pigeonhole principle that for large enough $M\ge 3$, we can find $1\le k_0\le M-2$ that works as required. Finally, the fact that all the sets $\mathcal{V}_{k,m}$ are non-empty is clear, as the partial labeling of each $\mb{T}$ was valid, by \cref{thm:MainConstruction}. 
	\end{proof}

	Now we are left to put together these ingredients and find the $\ell_{\Pi}$-full set.
	Let $M>0$ and $1\le k_0\le M-2$ be as in Proposition~\ref{pr:LargeM} and define $\mathcal{V}'=\mathcal{V}_{k_0,M}$.

	\begin{theorem}
		The set $\mathcal{V}'$ is $\ell_{\Pi}$-full for some $\ell$.
	\end{theorem}
	\begin{proof}
		Let $n_0$ and $n_1$ be the constants coming from \cref{pr:vworks} and \cref{pr:v'works}. Let $\ell_\Pi=5n^2_0+2n_1$. Now the theorem follows from \cref{pr:pathgluefinal}: indeed, observe that by \cref{pr:LargeM} we have $\mathcal{V}'=\mathcal{V}_{k_0,M}=\mathcal{V}_{k_0+1,M}=\mathcal{V}_{k_0+2,M}$. Therefore, \cref{pr:vworks} guarantees the first condition in \cref{pr:pathgluefinal}, while \cref{pr:v'works} guarantees the second condition, so we are done.
	\end{proof}

	\subsection*{Acknowledgements} 
	We would like to thank Anton Bernshteyn, Endre Csóka, Mohsen Ghaffari, Jan Hladký, Steve Jackson, Alexander Kechris, Edward Krohne, Oleg Pikhurko, Brandon Seward, Jukka Suomela, and Yufan Zheng for insightful discussions.

	YC was supported by  Dr.~Max R\"{o}ssler, by the Walter Haefner Foundation, and by the ETH Z\"{u}rich Foundation.
    JG was supported by Leverhulme Research Project Grant RPG-2018-424, by MSCA Postdoctoral Fellowships 2022 HORIZON-MSCA-2022-PF01-01 project BORCA grant agreement number 101105722, and by the Alexander von Humboldt Foundation in the framework of the Alexander von Humboldt Professorship of Daniel Kráľ endowed by the Federal Ministry of Education and Research.
    CG and VR were supported by the European Research Council (ERC) under the European Unions Horizon 2020 research and innovation programme (grant agreement No.~853109). VR was also supported by the Czech Science Foundation (GA~ČR), Junior Star project No.~26-23599M.
	ZV was partially supported by the National Research, Development and Innovation Office	-- NKFIH, grants no.~113047, no.~129211 and FWF Grant M2779.

	\bibliographystyle{alpha}
	\bibliography{ref}

\end{document}